\documentclass[12pt,psamsfonts,leqno,oneside,letterpaper,left=1.25truein, top=1truein, margin=2.5cm ]{amsart}



\usepackage{indentfirst}
\usepackage[below]{placeins} 

\usepackage[colorlinks,linkcolor=blue,citecolor=blue, pdfauthor = Huygens\ C.\ Ravelomanana,pdfsubject=Low\ dimensional\ topology]{hyperref}

\usepackage{amsmath,amssymb,amsfonts,amsthm,latexsym}
\usepackage{amsfonts}
\usepackage[utf8]{inputenc} \usepackage{ae,aecompl}     
\usepackage{amscd}
\usepackage{mathrsfs}       
\usepackage{fancyhdr}
\usepackage{graphicx,psfrag}
\usepackage{epsf} 
\usepackage[all]{xy}
\usepackage{tikz}
\usetikzlibrary{chains,scopes,shapes,arrows,trees,matrix,positioning,fit,decorations.pathmorphing}

\usepackage[nofoot,letterpaper,left=1.75truein, top=1truein]{geometry}

\usepackage{multicol}




%




\theoremstyle{definition}

\newtheorem{thm}{Theorem}[section]



\newenvironment{customcor}[1]
  {\innercustomcor}
  {\endinnercustomcor}

\newtheorem{lem}[thm]{Lemma}
\newtheorem{conj}[thm]{Conjecture}
\newtheorem{cor}[thm]{Corollary}
\newtheorem{pro}[thm]{Proposition}
\newtheorem{defn}[thm]{Definition}

\theoremstyle{definition}

\newtheorem{exa}[thm]{Example}

\newcommand {\C}{\mathbb C}
\newcommand {\Z}{\mathbb Z}

\newcommand {\Q}{\mathbb Q}

\newcommand {\Ta}{\mathbb{ T}_{\alpha}}
\newcommand {\Tb}{\mathbb{ T}_{\beta}}

\newcommand {\nb}{\mathcal N}




\newcommand{\hfinfty}{HF^{\infty}}
\newcommand{\hfm}{HF^-}
\newcommand{\hfp}{HF^+}
\newcommand{\hfred}{HF_{\mathrm{red}}}
\newcommand{\hfc}{HF^{\circ}}



\newcommand{\cfkinfty}{CFK^{\infty}}

\newcommand{\hfk}{\widehat{HFK}}

\newcommand{\on}{\operatorname}

\newcommand{\Spinc}{\on{Spin}^c}
\newcommand{\gr}{\on{gr}}

\newcommand{\x}{{\bf x}}
\newcommand{\y}{{\bf y}}

\newcommand{\s}{\mathfrak{s}}

\newcommand\liftGr{\widetilde\gr}

\newcommand{\rank}{\text{rank}}

\newcommand{\spinc}{\mathrm{ Spin}^c }


%
%
\textwidth=6.25in \textheight=9in
\addtolength{\evensidemargin}{-.75in}
\addtolength{\oddsidemargin}{-.75in}

\parindent=0pt
\parskip=8pt

\footskip = 50pt


\title{Exceptional Cosmetic surgeries on $S^3$}
\author{Huygens C. Ravelomanana}
\date{}


\begin{document}

\maketitle
\begin{abstract}
This paper concerns the truly or purely cosmetic surgery conjecture. We give a survey on exceptional surgeries and cosmetic surgeries. We prove that the slope of an exceptional truly cosmetic surgery on a hyperbolic knot in $S^3$ must be $\pm 1$ and the surgery must be toroidal but not Seifert fibred. As consequence we show that there are no  exceptional truly cosmetic surgeries on certain types of hyperbolic knot in $S^3$. We also give some properties of Heegaard Floer correction terms and torsion invariants for exceptional cosmetic surgeries on $S^3$.
\end{abstract}

\section{introduction}

Dehn surgery is an essential tool in 3-manifold topology. \textit{Cosmetic surgery} addresses the question: when do two surgeries along the same knot, but with distinct slopes, produce the same manifold? Such a situation does happen, but observations suggest that for generic knots and 3-manifolds this should be very rare. Let $Y$ be an oriented 3-manifold and $K$ a knot in $Y$. Let $Y_K(\alpha)$ and $Y_K(\beta)$ be the result of two Dehn filling on $K$ with two distinct slopes $\alpha$ and $\beta$. If $Y_K(\alpha) \cong Y_K(\beta)$ as oriented manifold we say that the two surgeries are \textit{truly (or purely) cosmetic}.
The following conjecture was proposed by Gordon in [\cite{Gordon-Kyoto}, Conjecture 6.1]  and  is stated as  conjecture (A) in problem 1.81 of Kirby list of problems in low-dimensional topology \cite{Kirby-list}.

\begin{conj}[{\bf Cosmetic surgery conjecture}]
Let $M$ be a compact connected  oriented irreducible 3-manifold with torus boundary and which is not a solid torus. Let $\alpha$ and $\beta$ be two inequivalent slopes on $\partial M$. If $M(\alpha)\cong M(\beta)$, then the homeomorphism is orientation-reversing. Equivalently, two surgeries on inequivalent slopes are never truly cosmetic.
\end{conj}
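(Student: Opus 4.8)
The plan is to attack the conjecture through Thurston's geometrization, organizing the argument by the geometric type of $M$ and by which fillings are hyperbolic, so that the generic fillings are dispatched by rigidity and the non-hyperbolic (exceptional) fillings are isolated as the genuine difficulty. Since $M(\alpha)$ and $M(\beta)$ are closed, I would first use the geometric decomposition of $M$ to split into the cases $M$ hyperbolic, $M$ Seifert fibred, and $M$ toroidal, and in each case compare the two fillings through invariants that are sensitive to orientation.

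First suppose $M$ is hyperbolic. By Thurston's hyperbolic Dehn surgery theorem, all but finitely many slopes $\gamma$ give hyperbolic $M(\gamma)$ with $\mathrm{vol}(M(\gamma)) < \mathrm{vol}(M)$ and $\mathrm{vol}(M(\gamma)) \to \mathrm{vol}(M)$ as the normalized length of $\gamma$ grows. If $M(\alpha) \cong M(\beta)$ with both fillings hyperbolic, then Mostow rigidity upgrades the homeomorphism to an isometry, forcing $\mathrm{vol}(M(\alpha)) = \mathrm{vol}(M(\beta))$. For all but finitely many slopes the core of the surgery solid torus is the unique shortest geodesic, so the isometry must carry one filling core to the other; drilling these cores recovers $M$ and exhibits $\alpha$ and $\beta$ in a single orbit of the finite symmetry group of $M$. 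This eliminates cosmetic pairs for all but finitely many slopes, leaving exactly the finite list of exceptional (non-hyperbolic) fillings.

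When $M$ itself is Seifert fibred or toroidal the filling types are governed by the combinatorics of the Seifert/JSJ data together with classical invariants — Seifert invariants, the order and linking form of $H_1$, and the Alexander polynomial — and one expects only orientation-reversing identifications to survive. The crux is therefore the finitely many exceptional fillings of a hyperbolic $M$, where no rigidity statement is available. For these I would pass to finer algebraic invariants, and, specializing to knots $K \subset S^3$ as in this paper, use the Casson--Walker invariant $\lambda$ together with the Heegaard Floer correction terms $d$. The surgery formula for $\lambda$ depends on the \emph{signed} coefficient $q/p$ through $\Delta_K''(1)$, which together with the requirement that the $d$-invariants of the two fillings coincide forces the slope to be $\pm 1$ and pins down the geometric type of the surgered manifold.

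The main obstacle is precisely this last step. Unlike the generic hyperbolic case there is no single mechanism covering the exceptional fillings: each admissible geometric type (reducible, toroidal, or small Seifert fibred) must be excluded by its own invariant computation, and the boundary cases where $\lambda$, the $d$-invariants, and the torsion all fail to distinguish orientations are exactly where the argument becomes delicate. It is the absence of a uniform rigidity principle for exceptional surgeries that keeps the full conjecture open and motivates the detailed case-by-case study of exceptional cosmetic surgeries carried out below.
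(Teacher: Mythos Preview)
The statement you are addressing is a \emph{conjecture}, not a theorem: the paper states it as Conjecture~1.1 (the cosmetic surgery conjecture of Gordon, Kirby problem 1.81) and does not claim to prove it. There is therefore no proof in the paper to compare your proposal against. Your write-up is not a proof either, and to your credit you say so explicitly in the final paragraph; what you have produced is a strategic outline explaining why the generic (hyperbolic-filling) case is handled by Mostow rigidity and why the exceptional fillings are the genuine obstruction.

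As a strategy document your outline is accurate and in fact tracks the paper's organization closely: the Bleiler--Hodgson--Weeks argument you sketch is exactly Proposition~2.16 of the paper, and your suggestion to attack the residual exceptional cases for knots in $S^3$ via $\Delta_K''(1)$, Casson invariant, and Heegaard Floer $d$-invariants is precisely the toolkit used to prove the paper's main partial result, Theorem~1.4. One small correction: the reduction to slopes $\pm 1$ in the paper does not come from the $d$-invariants directly but from Ni--Wu's constraint $r=-s$ combined with the Lackenby--Meyerhoff bound $\Delta(r,s)\le 8$ and a linking-form argument; the Heegaard Floer input is used separately to rule out Seifert fibred and $L$-space outcomes. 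If this was meant as a proof of the conjecture, it is not one --- the gap you yourself name (no uniform mechanism for exceptional fillings) is real and is why the conjecture remains open.
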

Here two slopes on $\partial M$ are called equivalent if there exists 
an orientation-preserving homeomorphism of $M$ which takes one to the other.

For the trivial knot, it is known that there can be infinitely many distinct surgeries which give the same output. 
Mathieu, in \cite{Mathieu}, gives an infinite family of distinct Dehn surgeries on a trefoil knot in $S^3$ which give homeomorphic manifolds. 
In \cite{Weeks} Bleiler, Hodgson and Weeks described an oriented  hyperbolic $3$-manifold with torus boundary having two distinct \textit{Dehn fillings} which give two oppositely oriented copies of the lens space $L(49, 18)$. 

In  \cite{Boyer-Lines}  Boyer and Lines proved that $\Delta_K''(1)$ must vanish in order to have truly cosmetic surgery.

\begin{pro}[Boyer and Lines]\label{my-proposition-1} Let $K$ be a  knot in an $\Z$-homology sphere $Y$. If $\Delta''_K(1)\neq 0$,  then there is no orientation preserving homeomorphism between  $Y_K(r)$ and $Y_K(s)$ if $r\neq s$.
\end{pro}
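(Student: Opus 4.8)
The plan is to obstruct orientation-preserving cosmetic surgeries by comparing two classical invariants of rational homology spheres: the Casson--Walker invariant $\lambda$ and the Reidemeister--Turaev torsion $\tau$. Since $Y$ is a $\Z$-homology sphere, every surgery $Y_K(p/q)$ is a rational homology sphere with $H_1 \cong \Z/p\Z$, and both $\lambda$ and $\tau$ are invariants of the oriented homeomorphism type. I would begin with the homological reduction: an orientation-preserving homeomorphism $Y_K(r) \cong Y_K(s)$ forces $|H_1|$ to agree, so writing $r = p/q$ and $s = p/q'$ with $p > 0$, we have the same $p$ and, since $r \neq s$, distinct $q \neq q'$.

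The central mechanism is Walker's surgery formula, which for a knot in a $\Z$-homology sphere reads
\[
\lambda\big(Y_K(p/q)\big) = \lambda(Y) + \frac{q}{2p}\,\Delta_K''(1) + \lambda\big(L(p,q)\big),
\]
where the last term is the Casson--Walker invariant of the lens space $L(p,q)$ (a Dedekind-sum expression) and depends only on $p$ and $q$. Applying $\lambda\big(Y_K(r)\big) = \lambda\big(Y_K(s)\big)$, the terms $\lambda(Y)$ cancel and one is left with
\[
\frac{q - q'}{2p}\,\Delta_K''(1) = \lambda\big(L(p,q')\big) - \lambda\big(L(p,q)\big).
\]
Thus everything hinges on controlling the right-hand side: if I can show the two lens-space invariants coincide, then the prefactor $\frac{q-q'}{2p}$ is nonzero and I conclude $\Delta_K''(1) = 0$, contradicting the hypothesis.

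To force $\lambda\big(L(p,q')\big) = \lambda\big(L(p,q)\big)$ I would bring in Reidemeister--Turaev torsion. The torsion of $Y_K(p/q)$, evaluated against the characters of $H_1 = \Z/p\Z$, factors as a contribution of the Alexander polynomial of $K$ (identical for the two surgeries, since the knot is the same) times a lens-space factor recording $q \bmod p$ exactly as in Franz's classification of lens spaces. An orientation-preserving homeomorphism identifies the full torsion functions, and because the Alexander factors agree this forces the lens-space factors to agree, i.e. $q' \equiv q^{\pm 1} \pmod p$. In either case $L(p,q')$ and $L(p,q)$ are orientation-preservingly homeomorphic, so their Casson--Walker invariants are equal, which is exactly the cancellation needed.

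The main obstacle is this last reduction: one must verify carefully that, for orientation-preserving homeomorphisms, the torsion pins down $q$ up to $q^{\pm 1}$ rather than merely up to sign. The weaker linking-form constraint $q \equiv x^{2} q' \pmod p$ is genuinely insufficient here, since for example $L(5,1)$ and $L(5,4)$ share a linking form while having opposite $\lambda$, and only the orientation-sensitive part of $\tau$ separates them. Keeping track of the Euler-structure refinement of $\tau$ is precisely what distinguishes the orientation-preserving from the orientation-reversing case and makes the lens-space terms cancel exactly; once that is in hand, Walker's formula delivers $\Delta_K''(1) = 0$ and the proposition follows.
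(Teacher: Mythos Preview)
The paper does not supply a proof of this proposition: it is quoted with attribution to Boyer and Lines \cite{Boyer-Lines} and used thereafter as a black box. So there is no in-paper argument to compare against. Your outline is, however, essentially the strategy of the original Boyer--Lines paper itself: they establish a Casson--Walker type surgery formula for homology lens spaces and combine it with Reidemeister torsion to separate surgeries, exactly along the lines you describe. In that sense you have reconstructed the intended source argument rather than diverged from it.

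One point to tighten in your torsion step. You plan to cancel the common Alexander-polynomial factor from the two torsions and read off $q'\equiv q^{\pm 1}\pmod p$ from the residual lens-space factors. That cancellation is only available at characters $\zeta$ with $\Delta_K(\zeta)\neq 0$; at characters where $\Delta_K(\zeta)=0$ both torsions vanish and nothing is learned. For a fixed $p$ it can happen that $\Delta_K$ vanishes at many (even all) nontrivial $p$-th roots of unity, so the sketch as written does not immediately yield the full congruence. Boyer and Lines handle this by working with their invariant and the torsion jointly rather than by a naive factor-and-cancel, so your write-up would need an extra sentence to close this---either by arguing that enough characters survive to force the congruence, or by tracking the refined torsion more carefully as you allude to at the end. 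With that addressed, the approach is correct.
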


Recently, with help of Heegaard Floer theory and Casson invariant, new criteria for cosmetic surgeries on knots in $S^3$, and more generally knots in $L$-space homology spheres, have been established by Yi and Zhongtao Wu in \cite{Ni-Wu}. 

\begin{thm}[Yi Ni and Zhongtao Wu]\label{Ni-Wu-theorem}
Suppose $K$ is a nontrivial knot in $S^3$, $r,s\in \Q \cup\left\{\infty \right\}$ are two distinct slopes such that $S_K (r)$ is homeomorphic to $S_K(s)$ as oriented manifolds. Then $r,s$ satisfy that
\begin{enumerate}
\item $r=-s$;
\item suppose $r = p/q$, where $p,\ q$ are coprime integers, then: $\  \ q^2 \equiv -1 \ \ \left[\text{mod}\ p\right];$
\item $\tau(K)=0$, where $\tau$ is the concordance invariant defined by Ozsv\'ath-Szab\'o and Rasmussen.
\end{enumerate}
\end{thm}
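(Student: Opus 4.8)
The proof will extract the three conclusions from two orientation-preserving homeomorphism invariants of a rational homology sphere: the Casson--Walker invariant $\lambda$ and the Heegaard Floer correction terms $d(\,\cdot\,,\s)$. The essential analytic input is the rational surgery formula for correction terms, which for $p/q>0$ reads
\[
d(S_K(p/q),i)=d(L(p,q),i)-2\max\{V_{\lfloor i/q\rfloor},\,V_{\lceil (p-i)/q\rceil}\},
\]
with the mirror-symmetric expression for $p/q<0$, where the $V_j=V_j(K)\ge 0$ are the non-increasing integer invariants of $K$ that vanish for $j$ large. I first note that an orientation-preserving homeomorphism $S_K(r)\cong S_K(s)$ induces $H_1(S_K(r))\cong H_1(S_K(s))$; writing $r=p/q$ and $s=p'/q'$ in lowest terms with $p,p'>0$ this gives $p=p'$, so henceforth the two slopes are $p/q$ and $p/q'$ with $q\ne q'$.

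For conclusion (1) the plan is to combine Casson--Walker with the sign of the correction terms. By Boyer and Lines (Proposition~\ref{my-proposition-1}) the hypothesis forces $\Delta_K''(1)=0$, so the Casson--Walker surgery formula collapses to $\lambda(S_K(p/q))=\lambda(L(p,q))$, a fixed multiple of the Dedekind sum $s(q,p)$. Equality of $\lambda$ under the homeomorphism then yields $s(q,p)=s(q',p)$. This Dedekind-sum identity alone does not separate the slopes, so I would next exploit the surgery formula: the correction above is non-positive for a positive slope and its sign reverses for a negative slope. Matching the correction-term package of $S_K(p/q)$ and $S_K(p/q')$ through the homeomorphism is incompatible with the two slopes lying on the same side of $0$, so they must have opposite signs; together with $p=p'$ and $s(q,p)=s(q',p)$ this forces $q'=-q$, i.e. $r=-s$.

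Conclusion (2) will then come from the lens-space backgrounds. With $s=-r$ the two surgeries have underlying lens spaces $L(p,q)$ and $L(p,-q)=-L(p,q)$, and an orientation-preserving homeomorphism can identify their correction-term data only through a spin$^c$ relabelling realizing $L(p,q)\cong L(p,-q)$ as oriented manifolds. By the classification of lens spaces this happens exactly when $q\cdot(-q)\equiv 1\pmod p$, that is $q^2\equiv -1\pmod p$; I note this is consistent with the Casson--Walker constraint, since $q^2\equiv-1$ forces $s(q,p)=s(q^{-1},p)=s(-q,p)=-s(q,p)=0$. For conclusion (3) I would use the mirror identity $S_K(-p/q)=-S_{\overline K}(p/q)$, where $\overline K$ is the mirror of $K$: feeding it into the correction-term comparison at the central spin$^c$ structure should force $V_0(K)=V_0(\overline K)=0$. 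Since $\tau(K)>0$ implies $V_0(K)>0$, both $\tau(K)\le 0$ and $\tau(\overline K)\le 0$ follow, and as $\tau(\overline K)=-\tau(K)$ this gives $\tau(K)=0$.

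The main obstacle is that the homeomorphism's action on spin$^c$ structures is not known in advance: the correction terms are only matched as a package, whereas the arguments above require a slot-by-slot comparison compatible with the linking form. Pinning down this induced bijection precisely enough to run the sign argument of step (1), to carry out the lens-space identification of step (2), and to isolate the central structure in step (3) is the delicate heart of the proof; everything else is bookkeeping with the surgery formula, Dedekind-sum symmetries, and the lens space classification.
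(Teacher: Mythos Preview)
The paper does not prove this theorem. Theorem~\ref{Ni-Wu-theorem} is stated in the introduction as a result of Ni and Wu, with attribution and a citation to \cite{Ni-Wu}; the present paper uses it as a black box (for instance, the conclusion $r=-s$ is invoked at the start of the proof of Lemma~\ref{my-lemma-1-chap3}) but never supplies an argument for it. There is therefore no proof in this paper against which to compare your proposal.

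As an aside, your outline is broadly in line with the actual Ni--Wu argument: Casson--Walker collapses to a Dedekind-sum identity once $\Delta_K''(1)=0$, Wu's earlier result (stated here as Theorem~\ref{Wu-theorem}) forces the two slopes to have opposite signs, and the rational surgery formula for $d$-invariants then supplies the remaining constraints. You have yourself flagged the real difficulty---controlling the bijection on $\mathrm{Spin}^c$ structures induced by the unknown homeomorphism so that the correction terms can be compared slot by slot---and left it as an unresolved obstacle; that is indeed where the substantive work in \cite{Ni-Wu} lies, so as written your proposal is a plan rather than a proof.
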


\bigskip
Using Ni and Wu's result combined with the progress made on exceptional surgeries on $S^3$ and the result of Boyer and  Lines we provide the following new property of cosmetic surgery on hyperbolic knot in $S^3$.

\begin{thm}\label{my-theorem-A}
Let $K$ be a hyperbolic knot in $S^3$, and $r,s\in \Q \cup\left\{\infty \right\}$  two distinct exceptional slopes  on $\partial \mathcal{N}(K)$. If $S_K (r)$ is homeomorphic to $S_K(s)$ as oriented manifolds, then the surgery must be   toroidal and non-Seifert fibred, moreover $\{r,s\}=\{+1,-1\}$.
\end{thm}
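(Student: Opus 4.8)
The plan is to combine the arithmetic restrictions coming from Theorem~\ref{Ni-Wu-theorem} and Proposition~\ref{my-proposition-1} with the geometric classification of exceptional Dehn fillings. First I would apply Ni and Wu's theorem: since $K$ is hyperbolic, hence nontrivial, and $S_K(r)\cong S_K(s)$ as oriented manifolds, we get $s=-r$; writing $r=p/q$ in lowest terms with $p>0$ and $q>0$ (using the symmetry $r\leftrightarrow s$), conditions (2) and (3) give $q^2\equiv -1\pmod p$ and $\tau(K)=0$, while Proposition~\ref{my-proposition-1} forces $\Delta''_K(1)=0$. The geometric intersection number of the two slopes is then $\Delta(r,s)=2pq$. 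Setting $M:=S_K(r)\cong S_K(s)$, exceptionality makes $M$ non-hyperbolic, so by geometrization $M$ is reducible, an atoroidal (small) Seifert fibred space, or toroidal, and the same alternative holds for both slopes since the filled manifolds agree. The universal bound $\Delta(r,s)\le 8$ on distances between exceptional slopes of a one-cusped hyperbolic $3$-manifold gives $pq\le 4$; listing the coprime pairs with $pq\le 4$ and $q^2\equiv -1\pmod p$ leaves exactly $(p,q)\in\{(1,1),(1,2),(1,3),(1,4),(2,1)\}$.

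Next I would eliminate every candidate except $(1,1)$ and $(2,1)$. Reducible and cyclic (lens space) fillings require integral slopes — Gordon--Luecke for reducibility, and the cyclic surgery theorem together with the cyclic filling at $\infty$ for lens spaces — and their distance-one bounds contradict $\Delta(r,s)=2p\ge 2$; thus $M$ is neither reducible nor a lens space. For every candidate with $p=1$ the Casson invariant disposes of the Seifert fibred case at once: such an $M$ would be a Seifert fibred homology sphere, i.e.\ a Brieskorn sphere, whose Casson invariant is nonzero, whereas Casson's surgery formula gives $\lambda\big(S_K(1/q)\big)=\tfrac{q}{2}\Delta''_K(1)=0$. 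The atoroidal Seifert fibred case at $p=2$, namely $(2,1)$, is excluded by the distance estimates for Seifert fibred fillings (Boyer--Zhang and relatives), which are violated by $\Delta(r,s)=4$. Invoking that toroidal slopes of hyperbolic knots in $S^3$ are integral or half-integral (so $q\in\{1,2\}$) then removes $(1,3)$ and $(1,4)$ outright, since with the Seifert case already gone they can belong to no exceptional class, contradicting exceptionality; and it kills $(1,2)$ because $r=1/2,\ s=-1/2$ would be two distinct half-integral toroidal slopes on one knot, impossible as an Eudave-Mu\~noz knot carries a unique such slope. The integral case forces $p\mid 2$, leaving $(1,1)$ and $(2,1)$.

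The main obstacle is the surviving pair $(2,1)$, that is $\{r,s\}=\{2,-2\}$. It passes the diameter bound ($\Delta=4\le 8$) and all three Ni--Wu conditions ($q^2=1\equiv -1\pmod 2$, $s=-r$, $\tau=0$); moreover the lens contribution vanishes, $\lambda_W(L(2,1))=0$ (the relevant Dedekind sum is zero), so the Casson--Walker invariant and the Heegaard Floer correction terms — the very input behind Ni--Wu — give no obstruction to it. Hence neither the arithmetic nor the Floer-theoretic tools can remove this case, and I expect to need the fine structure of integral toroidal surgeries (Gordon--Luecke, Eudave-Mu\~noz) to show that a hyperbolic knot cannot carry two integral toroidal slopes $+2$ and $-2$ producing the same \emph{oriented} manifold. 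This would force $p=1$ and hence $\{r,s\}=\{1,-1\}$.

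Once only $(1,1)$ survives, the conclusion follows: $M$ is an integral homology sphere which, having been excluded from the reducible, lens, and Seifert fibred cases above, must be toroidal. Finally $M$ is not Seifert fibred, because an irreducible Seifert fibred homology sphere is a Brieskorn sphere and hence atoroidal, while $M$ contains an essential torus. Thus the surgery is toroidal and non-Seifert fibred with $\{r,s\}=\{+1,-1\}$, as claimed.
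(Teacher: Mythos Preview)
Your outline largely parallels the paper's, and your use of the Ni--Wu congruence $q^2\equiv -1\pmod p$ to dispose of $p\in\{3,4\}$ is in fact cleaner than the linking-pairing computation the paper carries out in Lemma~\ref{my-lemma-1-chap3}. There are, however, two genuine gaps.

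The first is your treatment of the Seifert fibred case at $p=2$. You appeal to ``distance estimates for Seifert fibred fillings (Boyer--Zhang and relatives)'' against $\Delta(r,s)=4$, but Boyer--Zhang's bound of $3$ applies only to \emph{finite} fillings; an atoroidal small Seifert fibred space with $|H_1|=2$ need not have finite fundamental group (it may carry $\widetilde{SL_2}$ or Nil geometry), and for two arbitrary small Seifert fillings the known distance bound is $8$, so nothing is violated. The paper instead rules out all Seifert fibred rational homology spheres in one stroke (Corollary~\ref{my-corollary-1-chap3}): by Proposition~\ref{HF-of-seifert-fibred} the group $\hfred$ is supported in even degree for some orientation, while Proposition~\ref{Ni-Wu-result} forces $\chi(\hfred)=0$; hence $\hfred=0$, so $K$ admits an $L$-space surgery, the Alexander polynomial takes the restricted form of Corollary~\ref{corollary-Alexander_poly}, and a direct computation gives $\Delta''_K(1)\neq 0$, contradicting Proposition~\ref{my-proposition-1}. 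This argument is uniform in $p$ and also supersedes your Casson/Brieskorn step at $p=1$ (which would in any case require a justification that $\lambda\neq 0$ for \emph{every} Seifert fibred integral homology sphere other than $S^3$). The second gap is the one you yourself flag: the toroidal case $\{r,s\}=\{2,-2\}$. The tool you want is not Gordon--Luecke or Eudave-Mu\~noz but the Gordon--Wu classification (Theorem~\ref{toroidal:theorem 24.4}) of hyperbolic knots in $S^3$ carrying two toroidal slopes at distance $\geq 4$: at distance exactly $4$ one is reduced to the families $(L_1(n);0,4)$ and $(L_2(n);2-9n,-2-9n)$, and the paper finishes by checking that in each family the two surgered manifolds have different first homology, so they are not homeomorphic.
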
 

From this we deduce that there are no exceptional truly cosmetic  surgeries on some classical families of knots.

\begin{cor}\label{my-corollary-2-chap3}
There are no truly cosmetic surgeries on non-trivial algebraic knot in $S^3$.
\end{cor}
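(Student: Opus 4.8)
The plan is to bypass Theorem~\ref{my-theorem-A} entirely, since an algebraic knot is never hyperbolic --- it is an iterated torus knot, hence either a torus knot or a nontrivial cable, and in neither case hyperbolic --- so that Theorem's hypotheses do not apply. Instead I would extract the obstruction from the concordance invariant $\tau$ appearing in Theorem~\ref{Ni-Wu-theorem}. Condition (3) of that theorem asserts that a truly cosmetic pair of surgeries on a nontrivial knot in $S^3$ can occur only when $\tau(K)=0$. Thus it suffices to show that every nontrivial algebraic knot $K$ satisfies $\tau(K)\neq 0$: the corollary then follows at once, because no distinct slopes $r\neq s$ with $S_K(r)\cong S_K(s)$ as oriented manifolds can exist.

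To pin down $\tau$ I would use the structural description of algebraic knots. By definition such a knot is the link of an isolated singularity of a complex plane curve; it is an iterated torus knot whose cabling parameters, read off from the Puiseux data, are all positive, and it is realized as the closure of a positive braid. In particular $K$ is fibered with fiber genus equal to the Seifert genus $g(K)$, and $g(K)\geq 1$ whenever $K$ is nontrivial. The key input is that such knots are \emph{L-space knots}: torus knots are L-space knots, and the positivity of the cabling parameters guarantees that each successive cable satisfies the Hedden--Hom criterion $q\geq p(2g-1)$ under which the $(p,q)$-cable of an L-space knot is again an L-space knot. For an L-space knot one has $\tau(K)=g(K)$, so $\tau(K)=g(K)\geq 1>0$.

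With $\tau(K)>0$ established, condition (3) of Theorem~\ref{Ni-Wu-theorem} is violated, so no pair of distinct slopes can produce orientation-preservingly homeomorphic fillings of $K$; this is precisely the assertion that $K$ admits no truly cosmetic surgery, proving the corollary.

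The main obstacle is the clean verification that \emph{all} nontrivial algebraic knots fall into a class for which $\tau=g$. I would make this rigorous by one of two routes: either confirm inductively that the cabling data of a plane-curve singularity always satisfy the L-space cabling inequality above, so that the L-space-knot formula $\tau=g$ applies directly; or, sidestepping the Heegaard Floer cabling subtleties, invoke the positive-braid presentation, for which the slice--Bennequin inequality already forces $\tau(K)=g(K)$. Either way the genus is strictly positive for a nontrivial knot, which is all the argument requires.
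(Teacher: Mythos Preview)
Your proposal is correct and takes a genuinely different route from the paper.

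The paper's proof invokes N\'emethi's result (Proposition~\ref{Nemethi-algebraic-knot}) that $\hfred$ of a negative surgery on an algebraic knot is supported in even degree, and then reruns verbatim the argument of Corollary~\ref{my-corollary-1-chap3}: the Ni--Wu vanishing of the total Euler characteristic (Proposition~\ref{Ni-Wu-result}) forces the surgered manifold to be an $L$-space, so $K$ admits an $L$-space surgery, whence its Alexander polynomial has the special alternating form of Corollary~\ref{corollary-Alexander_poly}, from which one computes $\Delta_K''(1)\neq 0$ and concludes by Boyer--Lines (Proposition~\ref{my-proposition-1}).

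You instead go straight to condition~(3) of Theorem~\ref{Ni-Wu-theorem} and show $\tau(K)=g(K)>0$ for a nontrivial algebraic knot. This is shorter and more direct: it needs neither N\'emethi's parity computation nor the Alexander-polynomial argument, only the classical fact that an algebraic knot bounds a complex curve in $B^4$ realizing its Seifert genus (equivalently, is a positive braid closure), together with the Ozsv\'ath--Szab\'o/Livingston slice--Bennequin bound for $\tau$. Of your two suggested justifications, the positive-braid/complex-curve route is the cleaner one; the inductive verification of the Hedden--Hom cabling inequality also works but is more bookkeeping than the argument requires.

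One small remark: your opening observation that Theorem~\ref{my-theorem-A} does not apply because algebraic knots are not hyperbolic is correct, but it is not a point of contrast with the paper --- the paper's proof of this corollary does not invoke Theorem~\ref{my-theorem-A} either.
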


\begin{cor}\label{my-corollary-3}
There are no exceptional truly cosmetic surgeries on an alternating hyperbolic knot in $S^3$.
\end{cor}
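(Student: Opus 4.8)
The plan is to argue by contradiction, feeding the conclusion of Theorem~\ref{my-theorem-A} into the two obstructions already recorded. Suppose a hyperbolic alternating knot $K\subset S^3$ carried an exceptional truly cosmetic surgery $S^3_K(r)\cong S^3_K(s)$. By Theorem~\ref{my-theorem-A} the only surviving configuration is $\{r,s\}=\{+1,-1\}$ with the common filling toroidal and non-Seifert fibred; in particular both slopes are integral, so each filling is an integral homology sphere. Since $S^3$ is a $\Z$-homology sphere, Proposition~\ref{my-proposition-1} applies and the orientation-preserving homeomorphism $S^3_K(1)\cong S^3_K(-1)$ forces $\Delta_K''(1)=0$. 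At the same time, part~(3) of Theorem~\ref{Ni-Wu-theorem} gives $\tau(K)=0$; because $K$ is alternating it is Floer-thin and obeys the Ozsv\'ath--Szab\'o identity $\tau(K)=-\sigma(K)/2$, so $\tau(K)=0$ is equivalent to $\sigma(K)=0$. Hence any counterexample would be a nontrivial alternating knot realizing $\Delta_K''(1)=0$ and $\sigma(K)=0$ simultaneously.

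The remaining task is to exclude this using the rigidity of alternating knots. Normalizing the Alexander polynomial symmetrically as $\Delta_K(t)=a_0+\sum_{i\ge1}a_i(t^i+t^{-i})$, Murasugi's results give that $g=\deg\Delta_K$ is the genus, that $a_g\neq0$, and that the coefficients strictly alternate in sign; a direct computation yields $\Delta_K''(1)=2\sum_{i\ge1}i^2a_i$. In genus one this already closes the argument, since $\Delta_K''(1)=2a_1$ vanishes only for the unknot. The genus $\ge2$ case is where the weighted alternating sum $\sum_{i\ge1}i^2a_i$ can vanish for the polynomial alone, so here I would bring in $\sigma(K)=0$: reading the signature off the Goeritz/Seifert form of a reduced alternating diagram, the aim is to show that vanishing signature is incompatible with the coefficient pattern forced by $\Delta_K''(1)=0$ for a nontrivial knot.

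I expect this last incompatibility to be the main obstacle, since passing from the homological statement $\sigma(K)=0$ to sign information about the individual $a_i$ is delicate, and the quadratic weights $i^2$ make the cancellation hard to rule out from the alternation pattern by itself. The cleaner route is probably to bypass the polynomial bookkeeping and instead invoke the finer classification of exceptional surgeries on alternating knots: hyperbolic alternating knots are known to carry only a very restricted list of exceptional fillings, and one verifies that none of these is an integral toroidal non-Seifert filling at slope $\pm1$, directly contradicting the configuration forced by Theorem~\ref{my-theorem-A}.
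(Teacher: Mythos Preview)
Your first approach stalls exactly where you say it does: the combination $\Delta_K''(1)=0$ together with $\sigma(K)=0$ does not obviously force an alternating knot to be trivial, and there is no result in the paper (or, as far as I know, in the literature) that would close this gap directly. The Murasugi/Crowell alternating-sign pattern of the Alexander coefficients does not by itself control the weighted sum $\sum_{i\ge1} i^2 a_i$ once the genus exceeds one, and the signature condition is too coarse to recover sign information on the individual $a_i$. So the first two paragraphs do not amount to a proof.

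The ``cleaner route'' you gesture at in the last paragraph is precisely the paper's argument, but you stop short of carrying it out. The paper invokes the Ichihara--Masai classification of exceptional surgeries on hyperbolic alternating knots: such a knot is either a twist knot $K[2n,\pm2]$, a two-bridge knot $K_{[a,b]}$ with exceptional slope $r\in\{0,2b\}$, or a three-strand pretzel $P(a,b,c)$ with $r\in\{0,2(b+c)\}$. For the twist knots one computes $\Delta_K''(1)=-2n\neq0$ and applies Proposition~\ref{my-proposition-1}; for the remaining families the listed slopes are never $\pm1$, so Theorem~\ref{my-theorem-A} rules them out immediately. To complete your proposal you would need to supply exactly this casework.
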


\begin{cor}\label{my-corollary-4}
There are no exceptional truly cosmetic surgeries on arborescent knots in $S^3$.
\end{cor}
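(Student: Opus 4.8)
The plan is to argue by the geometric type of the arborescent knot $K$, feeding the hyperbolic case directly into Theorem \ref{my-theorem-A} and disposing of the rest through the classification results for arborescent knots. Recall that, by the Bonahon--Siebenmann classification, a non-Montesinos arborescent knot is hyperbolic, while the only non-hyperbolic Montesinos (in particular, the only non-hyperbolic arborescent) knots in $S^3$ are torus knots. So the first step is to observe that if $K$ is \emph{not} hyperbolic, then $K$ is a torus knot, hence algebraic, and Corollary \ref{my-corollary-2-chap3} already forbids truly cosmetic surgeries on it; alternatively one notes $\tau(K)=g(K)\neq 0$ and invokes Theorem \ref{Ni-Wu-theorem}(3). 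This settles every non-hyperbolic arborescent knot, so the whole content lies in the hyperbolic case.

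Assume now that $K$ is a hyperbolic arborescent knot and that $S_K(r)\cong S_K(s)$ as oriented manifolds for two distinct exceptional slopes $r\neq s$. Theorem \ref{my-theorem-A} applies verbatim and constrains the situation completely: the common surgery must be toroidal and non-Seifert fibred, and $\{r,s\}=\{+1,-1\}$. In particular both cosmetic slopes are \emph{integral}. I have thus reduced the statement to the following purely surgical assertion: no hyperbolic arborescent knot admits a toroidal, non-Seifert-fibred filling simultaneously at $+1$ and at $-1$ with the two fillings orientation-preservingly homeomorphic.

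The engine for the remaining step is the classification of toroidal (and, more generally, exceptional) Dehn surgeries on arborescent knots: Brittenham--Wu for hyperbolic $2$-bridge knots, Wu's classification of toroidal surgeries on Montesinos knots, and Wu's earlier work on general (branched-tree) arborescent knots. In each stratum the toroidal slope is computed explicitly from the tangle/Conway-sphere data, and the essential torus in the surgered manifold — together with the Seifert pieces it bounds — is described geometrically. The plan is to read off from these lists that the configuration forced by Theorem \ref{my-theorem-A} never occurs: for each relevant family one checks that either (i) the toroidal slope is never $\pm1$, or (ii) any surgery at $\pm1$ is in fact Seifert fibred and hence excluded by the non-Seifert conclusion of Theorem \ref{my-theorem-A}, or (iii) $S_K(+1)$ and $S_K(-1)$ are distinguished as \emph{oriented} toroidal manifolds by comparing their JSJ pieces. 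Note that the classical abelian obstructions can no longer help at this stage, since any $K$ surviving to here has already passed them — in particular $\Delta''_K(1)=0$ by Proposition \ref{my-proposition-1}, so that $\lambda(S_K(\pm1))=\pm\tfrac12\,\Delta''_K(1)=0$ and the Casson invariant cannot separate the two fillings; the separation must be genuinely geometric (or come from the correction terms / torsion discussed in the abstract).

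I expect the main obstacle to be precisely this last, geometric, case-check. The difficulty is not conceptual but organizational: one must run through Wu's finitely many families of toroidal arborescent surgeries, in each case either exclude the slope $\pm1$ outright, recognize the resulting graph manifold as Seifert fibred, or exhibit an oriented invariant of the JSJ decomposition that distinguishes $S_K(+1)$ from $S_K(-1)$. Finding a single uniform invariant of the surgered toroidal manifolds that handles all surviving Montesinos and branched-tree cases at once — rather than treating each family by hand — is where the real work, and the cleanest possible write-up, would lie.
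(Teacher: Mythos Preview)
Your approach is essentially the paper's: reduce via Theorem~\ref{my-theorem-A} to the assertion that no hyperbolic arborescent knot has both $+1$ and $-1$ as toroidal slopes, then appeal to the Brittenham--Wu and Wu classifications. Two remarks are worth making. First, your preliminary step handling non-hyperbolic arborescent knots is superfluous: the word ``exceptional'' in the statement already presupposes a hyperbolic complement, so the corollary is vacuous for non-hyperbolic $K$. Second, and more substantively, you overestimate the difficulty of the final case-check. The paper organizes it via Wu's type I/II/III decomposition rather than your hyperbolic/non-hyperbolic split: type III arborescent knots have no exceptional surgeries at all; each type II knot admits exactly one toroidal slope (so there is no pair to compare); and for type I (Montesinos of length $\leq 3$) the $2$-bridge case is subsumed by Corollary~\ref{my-corollary-3}, while for length-$3$ Montesinos knots one reads directly from Wu's explicit list that no knot carries $\{+1,-1\}$ as a pair of toroidal slopes. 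Thus your option (iii)---distinguishing $S_K(+1)$ from $S_K(-1)$ by comparing JSJ pieces---never needs to be invoked, and no ``uniform invariant'' is required: the enumeration already excludes the slope pair outright.
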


We have also the following properties of the Heegaard Floer correction term and the Alexander polynomial.

\begin{customcor}{\ref{my-corollary-A2}}
If a hyperbolic knot $K\subset S^3$ admits an exceptional truly cosmetic surgery then the Heegaard Floer correction term of any $1/n$ ($n\in \Z$) surgery on $K$ satisfies $$d(S^3_K(1/n))=0.$$
\end{customcor}

\begin{customcor}{\ref{my-corollary-A1}}
If a 3-manifold $Y$ is the result of an exceptional truly cosmetic surgery on a hyperbolic knot $K$ in $S^3$ then:
$$|t_0(K)| + 2 \sum_{i=1}^n |t_i(K)|\leq \rank  \hfred(Y),$$
where the number $t_i(K)$ for $i\in \Z$ is the torsion invariant of the Alexander polynomial  $\Delta_K(T)$ of $K$ and $n$ is the degree of $\Delta_K(T)$.
\end{customcor}

\subsection*{Organization.}	
This paper is organized as follow.  In section~\ref{Exceptional cosmetic surgeries} we  give some background and survey exceptional surgeries, cosmetic surgeries. In section~\ref{Cosmetic Surgery on $S^3$} we survey some result on cosmetic surgeries obtained using Heegaard Floer theory and give the proof of Theorem~\ref{my-theorem-A}. In section~\ref{special examples} we give examples of family of knots in $S^3$ which do not admit exceptional truly cosmetic surgeries. In section~\ref{miscellaneous properties of HFH} we enumerate some miscellaneous properties of Heegaard Floer invariant for exceptional truly cosmetic surgeries on $S^3$.

\subsection*{Acknowledgment.}
I would like to thank my supervisor Steven Boyer for his support and for suggesting
 the cosmetic surgery problem. This work was carried out while the  author was a graduate student at UQ\`AM and CIRGET in Montr\'eal.

\section{Exceptional cosmetic  surgeries}\label{Exceptional cosmetic  surgeries}
\subsection{Topological background}
\subsubsection{Distance between slopes}
Let $M$ be a compact, connected, oriented 3-manifold and let $T \subset \partial M$ be a torus. 
The distance, denoted $\Delta(\alpha, \beta)$, between two slopes $\alpha$ and $\beta$ on $T$ is their minimal geometric intersection number. That is
$$\Delta(\alpha, \beta) =\min \left\lbrace  \sharp C_1\cap C_2\  : \ C_1, C_2 \ \ \textnormal{simple closed curve representing}\ \ \alpha \ \textnormal{and}\  \beta \ \ \textnormal{respectively}\right\rbrace$$
The distance has the following straightforward properties:
\begin{itemize}
\item[•]  $\Delta(\alpha, \beta) = |\alpha \cdot \beta|.$
\item[•]  $\Delta(\alpha, \beta) =0$ iff $\alpha = \beta $.
\item[•]  $\Delta(\alpha, \beta) =1$ iff $\left\lbrace \alpha, \beta \right\rbrace$ form a basis of $H_1(\partial M;\Z)$.
\item[•]  If we fix a basis $\left\lbrace \mu, \lambda \right\rbrace$ of $H_1(T;\Z)$, then for $\alpha = p\mu +q \lambda$ and  $\beta = p'\mu +q' \lambda$ 
$$\Delta(\alpha, \beta) = |pq'-qp'|.$$
\end{itemize}

When $ \partial M$ consist of a single torus there is a formula relating the order of the first homology of the filled manifold to the distance of the filling slope from the rational longitude.
\begin{lem}[Watson \cite{Watson-PhD}] \label{watson-lemma}
Let $\alpha$ be  a slope on $ \partial M$. There is a constant $c_M$ such that
$$|H_1(M(\alpha);\Z)|= c_M\ \Delta(\alpha,\lambda_M).$$
\end{lem}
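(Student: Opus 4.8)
The plan is to reduce the computation of $|H_1(M(\alpha);\Z)|$ to linear algebra over $H_1(\partial M;\Z)$ and $H_1(M;\Z)$, exploiting the fact that Dehn filling only attaches a $2$-handle and a $3$-handle. Write $i_*\colon H_1(\partial M;\Z)\to H_1(M;\Z)$ for the inclusion-induced map. First I would record the homological effect of filling: gluing the solid torus $V$ along $\alpha$ amounts to adding a $2$-handle (which kills the class $i_*(\alpha)$) and a $3$-handle (which does not change $H_1$), so a Mayer--Vietoris computation yields
$$H_1(M(\alpha);\Z)\;\cong\; H_1(M;\Z)\big/\langle i_*(\alpha)\rangle.$$
Everything then hinges on controlling the cyclic subgroup generated by $i_*(\alpha)$ as the slope $\alpha$ varies.

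Next I would pin down the rational longitude. By the standard half-lives-half-dies principle, the image of $i_*\otimes\Q\colon H_1(\partial M;\Q)\to H_1(M;\Q)$ is exactly $1$-dimensional, so its kernel is a line, and $\lambda_M$ is the primitive integral class spanning that kernel, i.e. the unique slope mapping to a torsion class in $H_1(M;\Z)$. At this point I separate two cases. If $b_1(M)\ge 2$, then quotienting by a single class drops the rank by at most one, so $b_1(M(\alpha))\ge b_1(M)-1\ge 1$ for every $\alpha$; hence $H_1(M(\alpha);\Z)$ is infinite, and with the convention $|H_1|=0$ for an infinite group the formula holds trivially with $c_M=0$. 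The substantive case is $b_1(M)=1$.

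In that case set $F=H_1(M;\Z)$, let $T_F$ be its torsion subgroup, and let $\bar\imath\colon H_1(\partial M;\Z)\to \bar F:=F/T_F\cong\Z$ be the reduction of $i_*$. The key identity to establish is $|\bar\imath(\alpha)|=d\,\Delta(\alpha,\lambda_M)$, where $d$ is the index of $\operatorname{im}\bar\imath$ in $\Z$. For this I would use that every homomorphism $H_1(\partial M;\Z)\cong\Z^2\to\Z$ is intersection with a unique class $v$, so $\bar\imath(x)=v\cdot x$; since $\ker\bar\imath=\Z\lambda_M$ with $\lambda_M$ primitive, one forces $v=\pm d\,\lambda_M$, whence $\bar\imath(\alpha)=\pm d\,(\lambda_M\cdot\alpha)$ and $|\bar\imath(\alpha)|=d\,\Delta(\alpha,\lambda_M)$ via the formula $\Delta(\alpha,\lambda_M)=|\lambda_M\cdot\alpha|$. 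Finally, since $i_*(\alpha)$ has infinite order whenever $\alpha\ne\lambda_M$, the subgroup $\langle i_*(\alpha)\rangle$ meets $T_F$ trivially; feeding this into the short exact sequence $0\to T_F\to F\to\bar F\to 0$ gives $|F/\langle i_*(\alpha)\rangle|=|T_F|\cdot|\bar\imath(\alpha)|$. Combining the two displays yields $|H_1(M(\alpha);\Z)|=|T_F|\,d\,\Delta(\alpha,\lambda_M)$, so $c_M=|T_F|\,d$ is independent of the slope, and the excluded case $\alpha=\lambda_M$ is consistent because both sides vanish.

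I expect the main obstacle to be the identity $|\bar\imath(\alpha)|=d\,\Delta(\alpha,\lambda_M)$: one must correctly match the algebraic functional $\bar\imath$ on $H_1(\partial M;\Z)$ with the geometric intersection pairing against the rational longitude, and keep track of the extra factor $d=[\Z:\operatorname{im}\bar\imath]$ coming from the fact that $\lambda_M$ need not generate the image. The bookkeeping of the torsion factor $|T_F|$ through the splitting $F\cong\Z\oplus T_F$ is routine, but it must be carried out carefully so that the final constant $c_M$ emerges genuinely independent of $\alpha$.
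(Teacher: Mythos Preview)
Your argument is correct. The paper itself does not prove this lemma: it is stated with attribution to Watson's thesis, and the paper only records the explicit value
\[
c_M=\bigl|\mathrm{Tor}\,H_1(M;\Z)\bigr|\cdot \mathrm{ord}(i_*\lambda_M)
\]
without further justification. So there is no in-paper proof to compare against; you have supplied a complete proof where the paper simply cites one.

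One point worth noting is that your constant $c_M=|T_F|\cdot d$, with $d=[\Z:\operatorname{im}\bar\imath]$, is \emph{a priori} phrased differently from the paper's $|T_F|\cdot\mathrm{ord}(i_*\lambda_M)$. In fact the two agree: from the long exact sequence of the pair together with Poincar\'e--Lefschetz duality one has $\operatorname{coker}(i_*)\cong H_1(M,\partial M;\Z)\cong H^2(M;\Z)\cong T_F$, and combining this with your short exact sequence for the quotient forces $|\operatorname{im}(i_*)\cap T_F|=d$; since the torsion of $\operatorname{im}(i_*)\cong \Z^2/\ker(i_*)$ is cyclic of order $\mathrm{ord}(i_*\lambda_M)$, this gives $d=\mathrm{ord}(i_*\lambda_M)$. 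You do not need this identification for the lemma as stated, but it reconciles your formula with the one the paper quotes.

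A minor remark: your treatment of the case $b_1(M)\ge 2$ by setting $c_M=0$ is consistent with the convention $|H_1|=0$ for infinite groups, but note that the paper's displayed formula for $c_M$ is always a positive integer and is really only intended in the rational-homology-solid-torus situation $b_1(M)=1$, which is the case used throughout the paper.
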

If we denote $i:\partial M\to M$ the natural inclusion then the constant $c_M$ is the quantity
$$c_M=\left|\text{Tor}(H_1(M;\Z))\right|\ \text{ord}(i_*\lambda_M),$$
where $\text{ord}(i_*\lambda_M)$ is the order of $i_*\lambda_M$ in the homology of $M$.


\bigskip

\subsubsection{Surgery on a link.} Assume that $Y$ is an integer homology sphere. Let $L=K_1 \cup \cdots \cup K_m$ be a link in $Y$. Each component of $L$ has a canonical longitude therefore every surgery on $L$ can be described by an $m$-tuple $(p_1/q_1,\cdots,p_m/q_m)$ of elements in $\Q \cup \{\infty\}$. By a \textit{framed link} we mean the data of the link $L$ with such an $m$-tuple. The $m$-tuple itself will be called the \textit{framing} of the link. A framed link will be denoted by calligraphic letter, like $\mathcal{L}$. We will write $Y(\mathcal{L})$ for the result of a Dehn surgery on a framed link $\mathcal{L}$. The \textit{framing matrix} of a framed link $\mathcal{L}$ in $Y$ is the matrix $F(\mathcal{L})$ defined by
$$ F(\mathcal{L})_{ij} =  \left\lbrace\begin{array}{l}
         q_j\ \text{lk}(K_i,K_j) \ \ \ \textnormal{if } \ \ \ i\neq j \\
          p_i \ \ \ \textnormal{if } \ \ \ i= j 
         \end{array}\right.
$$
where lk$(,)$ denotes the linking number. The framing matrix gives a presentation for $H_1(Y(\mathcal{L}),\Z)$, in particular
$$\left| \det\left(F(\mathcal{L})\right) \right|= \left| H_1(Y(\mathcal{L});\Z) \right|.$$
For the case of a 2-component link, the framing matrix has the form
$$
F(\mathcal{L})=
\begin{pmatrix}
p_1 & q_2\ \text{lk}(K_1,K_2)\\
q_1\ \text{lk}(K_2,K_1) & p_2  
\end{pmatrix}
$$

For more details we refer to \cite{Saveliev}.

\subsection{Exceptional surgeries}
A compact, connected orientable 3-manifold $M$ will be called \textit{irreducible} if every properly embedded 2-sphere in $M$ bounds a 3-ball. Otherwise $M$ will be called \textit{reducible}. It will be called \textit{boundary irreducible} if every simple closed curve on $\partial M$  which bounds a disk in $M$ bounds a disk in $\partial M$, and otherwise \textit{boundary reducible}. All embedded surfaces in a 3-manifold we will be considering will be bicollared if not stated otherwise. From now on we will use the following definition.
\begin{defn}\label{definition:essential surface}
A properly embedded non-empty surface $F$ in a compact,  orientable 3-manifold $M$ is said to be \textit{essential} if it is a 2-sphere which does not bound a 3-ball or if it has the following properties:
$$\begin{aligned}
& \text{1. $F$ has no 2-sphere component,}\\
& \text{2. the inclusion morphism $\pi_1(F_i)\to \pi_1(M)$ is injective for every component $F_i$ of $F$,}\\
& \text{3. no component of $F$ is parallel into $\partial M$.}
\end{aligned}
$$
\end{defn}

Let $F \subset M$ be a properly embedded surface with boundary and $T$ be a torus component of $\partial M$. Each component of $\partial F \cap T$ is a simple closed curve on $T$ and they all determine the same slope. A slope $r$ on $T$ is called \textit{boundary slope} if it is the slope of a boundary component of an essential surface in $M$. If the corresponding surface is a punctured torus then the slope will also be called a \textit{toroidal slope}.

If all the components of $\partial M$ are tori or $\partial M$ is empty, $M$ is said to be \textit{hyperbolic} if its interior admits a complete finite volume Riemannian metric of constant sectional curvature $-1$. If $M$ is hyperbolic then it is irreducible, boundary irreducible, contains no essential tori or annuli and is not a Seifert fibred manifold. Thurston's hyperbolization theorem implies that the last statement is an equivalence. A hyperbolic structure on $M$ is unique up to isometry by the Mostow-Prasad rigidity theorem. 

Fix $M$ a hyperbolic 3-manifold  with $\partial M$ a union of tori. In this section we will discuss Dehn filling of $M$. Let $T$ be a component of $\partial M$. By studying metric completions of incomplete ``hyperbolic" 3-manifolds, W. Thurston discovered that except for a finite number of slopes all the Dehn fillings of $M$ along $T$ give hyperbolic manifolds.
\begin{thm}[Thurston, \cite{Thurston}]\label{Thurston-Theorem}
Let $M$ be a compact connected oriented 3-manifold with boundary a union of tori. Let $T$ be a component of $\partial M$. If int$(M)$ admits a complete finite volume hyperbolic structure, for all but finitely many slopes $\alpha$ on $T$, $M(\alpha)$ is hyperbolic and the core of the Dehn filling is isotopic to the unique shortest geodesic in this manifold.
\end{thm}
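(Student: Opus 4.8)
The plan is to realize the complete hyperbolic structure on $\mathrm{int}(M)$ as a discrete faithful holonomy representation $\rho_0 \colon \pi_1(M) \to \mathrm{PSL}(2,\C)$, unique up to conjugacy by Mostow--Prasad rigidity, and then to deform it. Because $T$ is a torus cusp, $\rho_0$ carries the peripheral subgroup $\pi_1(T) \cong \Z^2$ to a rank-two group of parabolics fixing a common point of $\partial \Hbb^3$. The first step is to understand the local structure of the space of nearby representations. Using that $H^1(\pi_1(M); \mathfrak{sl}_2(\C)_{\mathrm{Ad}\,\rho_0})$ has complex dimension equal to the number $h$ of cusps---a cohomological computation whose key inputs are the ``half-lives-half-dies'' duality on the boundary tori and local rigidity in the interior---I would show that the character variety of $M$ is, near $[\rho_0]$, a smooth complex $h$-dimensional variety, with the complex translation lengths $u_1,\dots,u_h$ of the meridians of the $h$ cusps serving as local holomorphic coordinates. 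Restricting to the one-dimensional slice that keeps the other cusps complete and opens up only $T$, I fix a basis $(\mu,\lambda)$ of $\pi_1(T)$ and use the complex length $u$ of $\mu$ as parameter; the complex length $v$ of $\lambda$ is then a holomorphic function $v=\tau(u)$ with $\tau(0)=\tau_M\neq 0$ recording the cusp modulus, and $u=v=0$ at the complete structure, where these peripheral elements become loxodromic under deformation.

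Next I would set up the generalized Dehn surgery equations. Fix a slope $\alpha = p\mu + q\lambda$ with $\gcd(p,q)=1$. For a deformed incomplete structure with parameter $u$, the filled manifold $M(\alpha)$ acquires a smooth complete hyperbolic metric precisely when the holonomy of the core curve of the attached solid torus is elliptic of rotation angle $2\pi$, which amounts to the single complex equation $p\,u + q\,\tau(u) = 2\pi i$. For $p^2 + q^2$ large this equation has a unique solution $u$ close to $0$, by the implicit function theorem in the coordinate above; geometrically, large slopes force the core holonomy to be a very small rotation, so the incomplete structure is only a mild deformation of the complete one. This already produces a hyperbolic structure for all but finitely many fillings.

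The crux is the closing-up step: one must verify that the incomplete structure determined by the solution $u$ genuinely completes to the filled manifold $M(\alpha)$, with the added core curve appearing as a closed geodesic rather than a cone singularity of the wrong angle. I would control the developing map near the cusp, showing that as $u$ leaves $0$ the cusp cross-section degenerates into a tube around a short geodesic whose length is governed by $\mathrm{Re}(u)$, the $2\pi$ normalization guaranteeing smoothness. A Gromov--Hausdorff convergence argument then shows the filled manifolds converge geometrically to $\mathrm{int}(M)$ as $\Delta(\alpha,\cdot)\to\infty$, with the core geodesic shrinking. The Neumann--Zagier length estimate, giving core length asymptotic to a positive constant divided by $p^2+q^2$, forces the core to be the unique shortest geodesic for all but finitely many slopes, completing the argument.

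The hardest part, I expect, is exactly this completeness analysis: ensuring that solving the holonomy equation yields an \emph{honest} smooth complete metric on the closed-up manifold rather than an incomplete or cone-singular one, and quantitatively controlling the geometry near the filling locus uniformly in the slope. This is where Thurston's local analysis of hyperbolic Dehn filling, together with the Neumann--Zagier asymptotics, carries the real weight; the deformation theory and the surgery equation are comparatively formal once the dimension count is in hand.
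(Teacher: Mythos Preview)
The paper does not contain a proof of this theorem; it is stated as a background result and attributed to Thurston with a citation, serving only to motivate the notion of exceptional slopes. There is therefore nothing in the paper to compare your argument against.

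That said, your outline is the standard one, essentially Thurston's original approach as later refined by Neumann and Zagier: deform the holonomy representation near the complete structure, parametrize by the complex length $u$ of the meridian, solve the Dehn filling equation $pu+qv=2\pi i$ for large $(p,q)$, and then analyze completion and core length. One small slip: you write $v=\tau(u)$ with $\tau(0)=\tau_M\neq 0$, which contradicts your own (correct) assertion that $u=v=0$ at the complete structure. What you want is that $v$ is a holomorphic function of $u$ vanishing at $0$ with $v'(0)=\tau_M$, i.e.\ $v(u)=\tau_M u+O(u^2)$; it is this nonvanishing derivative that makes the implicit function theorem work. With that correction the sketch is sound, and your identification of the completion step and the uniform tube-geometry estimates as the genuine work is accurate.
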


Let's consider the set $E(M,T)$ of non-hyperbolic slope on $T$. A slope in $E(M,T)$ is called an \textit{exceptional slope}. By the above theorem it is a finite set, and one goal of Dehn filling theory is to understand this set of slopes. One of the main ``techniques" in this study is to find a bound on the distance $\Delta(r,s) $ between two exceptional slopes $r$ and $s$.

\begin{thm}[Lackenby-Meyerhoff, \cite{Lackenby-Meyerhoff}] \label{Lackenby-Meyerhoff}
Let $M$ be a compact orientable
3-manifold with boundary a torus, and with interior admitting
a complete finite-volume hyperbolic structure. If $r$ and
$s$ are exceptional slopes on $\partial M$, then their
intersection number $\Delta(r, s)$ is at most $8$.
\end{thm}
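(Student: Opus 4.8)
The plan is to translate the purely combinatorial quantity $\Delta(r,s)$ into hyperbolic geometry by working in the cusp. First I would pass to the maximal cusp neighbourhood $C$ of the torus boundary $T=\partial M$: its boundary $\partial C$ inherits a flat (Euclidean) metric, and every slope $r$ on $T$ is represented by a primitive translation vector $v_r$ in the universal cover $\mathbb{R}^2$ of $\partial C$, with Euclidean geodesic length $\ell(r)=|v_r|$. Writing $A$ for the area of $\partial C$, a basis computation shows that the geometric intersection number equals the normalized cross product, $\Delta(r,s)\,A=|v_r\times v_s|$, so that
$$\Delta(r,s)=\frac{|v_r\times v_s|}{A}\le\frac{\ell(r)\,\ell(s)}{A}.$$
Everything then reduces to two inputs: an upper bound on the lengths of exceptional slopes, and a lower bound on the cusp area $A$.

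For the length bound I would invoke the $6$-theorem of Agol and Lackenby (the sharpening of the Gromov--Thurston $2\pi$-theorem): if a slope has length strictly greater than $6$ on the maximal cusp, then the filled manifold is hyperbolike, hence irreducible, atoroidal, not Seifert fibred, and with infinite word-hyperbolic fundamental group, so hyperbolic by geometrization. Consequently every exceptional slope satisfies $\ell\le 6$. For the area bound I would use the Cao--Meyerhoff determination of the minimal-volume cusped hyperbolic $3$-manifolds, which yields a universal lower bound on the area of the maximal cusp. Feeding both into the displayed inequality gives an estimate of the form $\Delta(r,s)\le 36/A$, on the order of $10$--$12$. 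This is the soft part of the argument.

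The genuinely hard part is closing the gap between this crude estimate and the optimal value $8$. The inequality $\Delta(r,s)\le\ell(r)\ell(s)/A$ is far from sharp when both slopes are long, because a near-maximal cross product forces the two representing vectors to be long \emph{and} nearly orthogonal simultaneously, which is incompatible with a small cusp area. To exploit this I would return to the interior of the proof of the $6$-theorem and track, for each filling, the short core geodesic produced by the surgery and the way its length constrains the cusp shape; this carves out a region in the moduli of admissible cusp shapes and slope-length data in which any hypothetical counterexample to the bound $8$ must lie. The main obstacle, and the step I expect to be genuinely difficult, is eliminating this region rigorously: the surviving cases form a compact but intricate parameter space on which one must verify a sharp inequality, and I expect this to require a careful computer-assisted exhaustion with rigorous interval arithmetic rather than any closed-form estimate. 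Finally I would confirm that $8$ cannot be improved by exhibiting the figure-eight knot complement, whose exceptional slopes $\{\infty,-4,\dots,4\}$ realize $\Delta=8$.
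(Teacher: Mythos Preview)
The paper does not prove this statement at all: Theorem~\ref{Lackenby-Meyerhoff} is simply quoted from Lackenby and Meyerhoff's paper \cite{Lackenby-Meyerhoff} and used as a black box (specifically, to bound the distance between exceptional slopes in Lemmas~\ref{my-lemma-1} and~\ref{my-lemma-1-chap3}). There is therefore nothing in this paper to compare your sketch against.

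That said, your outline is a fair high-level summary of how the actual Lackenby--Meyerhoff argument runs: the inequality $\Delta(r,s)\le \ell(r)\ell(s)/A$ together with the $6$-theorem gives a soft bound around $10$--$12$, and the reduction to the sharp bound $8$ is indeed the substance of their paper, carried out via a delicate rigorous computer analysis of possible cusp shapes and horoball packings. If your intent was to supply a proof where the present paper has none, you should be aware that what you call ``the genuinely hard part'' is essentially the entire content of \cite{Lackenby-Meyerhoff}, and your paragraph gestures at it without giving any of the mechanism (the parameter space they analyze, the Mom-technology and horoball diagram constraints, or the structure of the computer search). For the purposes of \emph{this} paper, though, no proof is expected: the correct move is simply to cite the result, as the author does.
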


This bounds is achieved by the figure-8 exterior, indeed 
$$E(\text{figure-8 exterior})=\{\infty, 0,\pm 1,\pm 2,\pm 3, \pm 4\}.$$


It was conjectured by Gordon that the distance of two exceptional slopes is less than $5$ for almost all hyperbolic 3-manifold with torus boundary.

\begin{conj}
Let $M$ be an hyperbolic 3-manifold with boundary a torus. If $\alpha$ and $\beta$ are two exceptional slopes on $\partial M$, then $\Delta(\alpha,\beta) \leq 5$ unless $M$ is one of $W(1), W(2), W(-5/2),$ or $ W(-5)$, see $\textnormal{figure 1}$.
\end{conj}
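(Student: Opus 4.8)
The plan is to stratify the exceptional slopes according to the topological type of the manifold each filling produces. By Thurston's hyperbolization (Theorem~\ref{Thurston-Theorem}) together with Geometrization, every non-hyperbolic filling $M(\alpha)$ is reducible, toroidal, or a small Seifert fibred space (lens spaces and atoroidal Seifert pieces included). Each exceptional slope therefore lies in one of these classes, and the conjectured bound $\Delta(\alpha,\beta)\le 5$ splits into bounding the distance within, and across, each pair of classes. For several pairs the desired bound (or a strictly smaller one) is already available in the literature: distances between reducible fillings, between a reducible and a toroidal filling, and many of the Seifert-versus-toroidal and Seifert-versus-Seifert cases are controlled by existing constants coming from the work of Gordon, Wu, and Boyer--Zhang. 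I would first assemble these inputs and isolate exactly which pairs of classes can a priori realise distance $6$, $7$, or $8$, so that the problem is reduced to a short list of configurations.

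The heart of the argument is the toroidal--toroidal case, where the general bound is the Lackenby--Meyerhoff bound $\Delta\le 8$ of Theorem~\ref{Lackenby-Meyerhoff} and where the figure--8 exterior shows that $8$ is attained. Here the tool is the standard machinery of intersection graphs: given essential punctured tori $S$ and $T$ realising the two toroidal slopes, one forms the graphs $G_S,G_T$ from the arcs and circles of $S\cap T$ on each surface and analyses their faces through Scharlemann cycles, Gordon--Luecke parity, and edge/vertex counts. The objective is to show that whenever $\Delta(\alpha,\beta)\ge 6$ the combinatorics of $(G_S,G_T)$ is so rigid that $M$ is forced to be one of the four Whitehead--link fillings $W(1),W(2),W(-5/2),W(-5)$, and otherwise the distance collapses to at most $5$.

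The main obstacle is precisely this rigidity step: converting a lower bound on $\Delta$ into an exact identification of $M$. The intersection-graph analysis fragments into many subcases, distinguished by the number of boundary components of $S$ and $T$, the parity of the intersection number, and the presence of great-web or Scharlemann-cycle configurations, and each subcase needs a delicate bookkeeping argument to either exclude distance $\ge 6$ or pin down the ambient manifold. A secondary difficulty is that small Seifert fibred fillings need not contain any essential surface, so the graph method does not apply to them directly; for those slopes I would instead compare the Seifert invariants of $M(\alpha)$ and $M(\beta)$, constraining the admissible slopes using the Casson--Walker invariant and the Heegaard Floer correction terms, and match the outcome against the known data for the Whitehead-link family. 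Since these final identification steps are exactly the content of an open conjecture, I expect a complete proof to demand substantial new combinatorial input beyond the bounds currently in hand, and I would regard the toroidal--toroidal rigidity analysis as the decisive part of the work.
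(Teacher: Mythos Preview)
The statement you are attempting to prove is a \emph{conjecture} in the paper, not a theorem: the paper records it as Gordon's conjecture and does not supply a proof. The only progress the paper reports is the remark that the conjecture is known to hold when both slopes are toroidal (citing \cite{punctured-tori}); for the remaining pairs of exceptional types the question is left open. So there is no ``paper's own proof'' to compare your proposal against.

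Your outline is a reasonable survey of the ingredients one would expect to use---stratifying exceptional fillings by type, invoking the known pairwise distance bounds, and isolating the configurations that could realise $\Delta\ge 6$---but you correctly identify at the end that the decisive rigidity step (forcing $M$ to be one of the four Whitehead fillings once $\Delta\ge 6$) is exactly the open content of the conjecture. In other words, your proposal is a plan rather than a proof, and the gap you flag is genuine: the intersection-graph combinatorics in the toroidal--toroidal case and the absence of essential surfaces in the small-Seifert case are precisely the obstacles that keep this a conjecture. Nothing in the paper closes that gap either.
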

\begin{figure}[h!]
\begin{center}
\begin{minipage}{11cm}
\includegraphics[width=110mm]{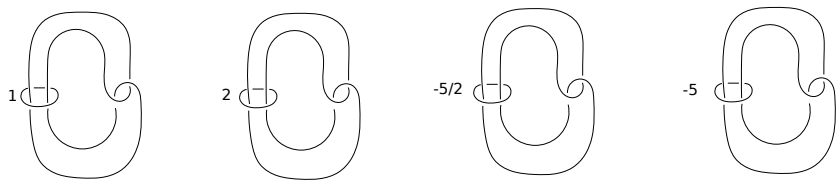}
\caption{ $W(1), W(2), W(-5/2), W(-5)$}
\end{minipage}
\end{center}
\end{figure}

\bigskip

The conjecture is known to be true if the two slopes are both toroidal \cite{punctured-tori}. 

For non-toroidal exceptional surgeries there are three principal results.

\begin{thm} [Cyclic surgery theorem, Culler-Gordon-Luecke-Shalen\cite{Cyclic-paper}]
Let $M$ be a compact, oriented, irreducible 3-manifold which is not a Seifert fibred space. Assume that $\partial M$ is a torus and let $r,s$ be two slopes on $\partial M$. If $\pi_1(M(r))$  and $\pi_1(M(s))$ are cyclic, then $\Delta(r,s)\leq 1$. 
\end{thm}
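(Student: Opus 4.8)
The plan is to deploy the Culler--Shalen machinery built from the $\mathrm{SL}_2(\C)$ character variety of $\pi_1(M)$, which is the natural setting for bounding distances between exceptional slopes. First I would form the character variety $X(M)$ and isolate a one-dimensional component $X_0$ containing the character of an irreducible representation. Such a component exists by the standard dimension estimate: since $M$ is irreducible with torus boundary, $b_1(M)\geq 1$, and the algebraic geometry of the representation variety forces every component carrying irreducible characters to have dimension at least one. The hypothesis that $M$ is not Seifert fibred enters here and again below: it guarantees that the construction produces a genuine norm rather than a degenerate seminorm, since degeneracy would detect a closed essential surface and, together with irreducibility, a Seifert structure.

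Next I would define the Culler--Shalen norm on $H_1(\partial M;\R)\cong\R^2$. For a slope $\gamma$, the trace function $I_\gamma$ restricts to a rational function on a smooth projective model $\widetilde{X_0}$ of $X_0$; setting $f_\gamma=I_\gamma^2-4$ and $\|\gamma\|=\deg\bigl(f_\gamma\colon\widetilde{X_0}\to\mathbb{P}^1\bigr)$ gives, after extending homogeneously, a norm whose unit ball $B$ is a finite-sided convex polygon symmetric about the origin. The vertices of $B$ lie along boundary slopes: each ideal point of $\widetilde{X_0}$ yields, via the associated valuation and action on a Bass--Serre tree, an essential surface in $M$ and hence a detected boundary slope. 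Writing $s=\min\{\|\gamma\|:\gamma\neq 0\text{ primitive}\}$ for the minimal norm, the heart of the argument is the claim that a cyclic filling is always minimal: if $\pi_1(M(\gamma))$ is cyclic then $\|\gamma\|=s$.

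To prove that claim I would analyze the zeros of $f_\gamma$ at finite points of $\widetilde{X_0}$. A zero at an irreducible character $\chi_\rho$ forces $\rho(\gamma)=\pm I$, so that $\rho$ factors through $\pi_1(M(\gamma))$; but a cyclic group admits only abelian, hence reducible, $\mathrm{SL}_2(\C)$ representations, so no such irreducible character can occur. This removes the ``extra'' contributions to $\deg f_\gamma$ and pins $\|\gamma\|$ to the minimum $s$. Finally, with $\alpha$ and $\beta$ both cyclic slopes we have $\|\alpha\|=\|\beta\|=s$, so $\pm\alpha,\pm\beta$ all lie on the boundary of the scaled ball $sB$. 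A short convexity-and-lattice argument in $\R^2$---the quadrilateral spanned by these four points sits inside $sB$, whose area is controlled by $s$ through the polygonal structure of the norm---then yields $|\alpha\cdot\beta|\leq 1$, i.e. $\Delta(\alpha,\beta)\leq 1$.

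The main obstacle is the minimality claim of the third paragraph: translating the purely group-theoretic hypothesis ``$\pi_1(M(\gamma))$ cyclic'' into the statement that the trace function attains minimal degree. This demands the delicate local study of $f_\gamma$ at both finite points and ideal points of $\widetilde{X_0}$, together with the representation-theoretic input that cyclic groups have no irreducible $\mathrm{SL}_2(\C)$ representations. Establishing that the seminorm is nondegenerate (a true norm) under the non-Seifert-fibred hypothesis, in the first step, carries comparable technical weight.
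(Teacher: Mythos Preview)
The paper does not prove this theorem; it is quoted from \cite{Cyclic-paper} as a background result, with no argument supplied. So there is nothing in the paper to compare your sketch against. That said, your outline does follow the character-variety half of the original Culler--Gordon--Luecke--Shalen proof, and it is worth flagging two genuine gaps in it.

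First, the sentence ``degeneracy would detect a closed essential surface and, together with irreducibility, a Seifert structure'' is false: irreducible $3$-manifolds with torus boundary can, and frequently do, contain closed essential surfaces without being Seifert fibred (any Haken hyperbolic knot complement is an example). The non-Seifert hypothesis is genuinely needed---the trefoil exterior has cyclic fillings at slopes $5$ and $7$, distance $2$---but it does not enter the proof in the way you describe. In CGLS the dichotomy is rather between $M$ \emph{small} (no closed essential surface) and $M$ not small, and the two cases are handled by substantially different arguments.

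Second, and relatedly, your sketch covers only the Culler--Shalen side of the proof. The full CGLS argument is a hybrid: the character-variety norm handles the case where a suitable curve $X_0$ exists and the norm is nondegenerate, but the remaining cases require the combinatorial intersection-graph techniques of Gordon and Luecke (thin position, analysis of how planar surfaces and Scharlemann cycles sit in $M$). Your plan contains no trace of that machinery. There is also a smaller gap in your minimality step: $f_\gamma(\chi_\rho)=0$ only gives $\mathrm{tr}\,\rho(\gamma)=\pm 2$, which allows $\rho(\gamma)$ to be parabolic rather than $\pm I$; the parabolic case does not immediately yield a representation of $\pi_1(M(\gamma))$ and needs its own treatment in CGLS.
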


\begin{thm} [Finite surgery theorem, Boyer-Zhang \cite{finite-paper}]
Let $M$ be a compact orientable hyperbolic 3-manifold with torus boundary. If $r,s$ are two slopes on $\partial M$ such that $\pi_1(M(r))$  and $\pi_1(M(s))$ are finite, then $\Delta(r,s)\leq 3$. 
\end{thm}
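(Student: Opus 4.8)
The plan is to follow the character-variety strategy that Culler--Gordon--Luecke--Shalen use for the Cyclic Surgery Theorem, but carried out over $PSL_2(\C)$ rather than $SL_2(\C)$, since finite fillings need not lift to $SL_2(\C)$-representations. First I would set up the $PSL_2(\C)$-character variety $X(M)$ of $M$. Because $\mathrm{int}(M)$ is hyperbolic, the character of the discrete faithful holonomy lies on a distinguished $1$-dimensional component $X_0 \subset X(M)$. Passing to a smooth projective model $\widetilde{X}_0$ of $X_0$, each slope $\gamma$ on $\partial M$ determines a regular function $f_\gamma$ built from the trace of the corresponding peripheral element, and I would define the Culler--Shalen seminorm by
$$\|\gamma\| = \deg\bigl(\widetilde{f}_\gamma : \widetilde{X}_0 \to \CP^1\bigr).$$
For a hyperbolic $M$ this seminorm is in fact a norm on $H_1(\partial M;\R) \cong \R^2$, and its key structural feature is that the ball $B_s = \{x : \|x\| \le s\}$, where $s = \min\{\|\gamma\| : \gamma \neq 0\}$, is a convex, centrally symmetric polygon whose vertices are rational and detected by the ideal points of $\widetilde{X}_0$ (the boundary slopes). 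I would also record the fundamental inequality $\|\gamma\| \ge s$ for every slope.

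The heart of the argument is to bound $\|\gamma\|$ from above whenever $\pi_1(M(\gamma))$ is finite. The filling relation forces every character of $X_0$ surviving the filling to factor through the finite group $\pi_1(M(\gamma))$; since a finite group has only finitely many irreducible $PSL_2(\C)$-representations up to conjugacy, the number of zeros of $f_\gamma$ on $\widetilde{X}_0$ lying over such characters is controlled. Concretely, I would go through the classification of finite subgroups of $PSL_2(\C)$ --- cyclic, dihedral, and the polyhedral groups $A_4, S_4, A_5$ --- and for each admissible $\pi_1(M(\gamma))$ bound the multiplicities contributing to $\deg \widetilde{f}_\gamma$. This yields an inequality of the shape $\|\gamma\| \le c\, s$ with an explicit small constant $c$ depending on the isomorphism type, the cyclic case reproducing $\|\gamma\| = s$ exactly as in the Cyclic Surgery Theorem.

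With both the lower bound $\|\gamma\| \ge s$ and the type-dependent upper bounds in hand, I would finish by exploiting the polygonal geometry of $B_s$. For two slopes the distance $\Delta(\alpha,\beta) = |\alpha \cdot \beta|$ is the area of the lattice parallelogram they span, and convexity of the norm ball translates a bound on $\|\alpha\|$ and $\|\beta\|$ into a bound on how far apart $\alpha$ and $\beta$ can sit on $\partial M$. Counting the lattice points of small norm on the boundary polygon and pushing the constants from the previous step through the case analysis then produces $\Delta(\alpha,\beta) \le 3$.

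I expect the representation-theoretic step of the second paragraph to be the principal obstacle. The cyclic case is governed entirely by abelian, hence reducible, representations and was already handled by Culler--Gordon--Luecke--Shalen, but the dihedral and polyhedral fillings admit genuinely more irreducible $PSL_2(\C)$-representations, and bounding their combined contribution to the norm --- together with the subtle question of whether a given finite filling is detected on $X_0$ at all, versus only after lifting to $SL_2(\C)$ --- is exactly what forces the jump from the bound $1$ of the cyclic theorem to the bound $3$ here, and is the reason the $PSL_2(\C)$ seminorm, rather than the $SL_2(\C)$ one, must be used throughout.
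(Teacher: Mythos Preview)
The paper does not supply its own proof of this statement: the Finite Surgery Theorem is quoted as a background result with a bare citation to Boyer--Zhang \cite{finite-paper}, in the same spirit as the Cyclic Surgery Theorem and the other distance bounds summarised in Table~\ref{distance-table}. There is therefore no in-paper argument to compare your proposal against.

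That said, your sketch is a faithful high-level account of the Boyer--Zhang strategy, and it is useful to flag what it omits. You correctly identify the two structural ingredients --- the $PSL_2(\C)$ Culler--Shalen seminorm on $H_1(\partial M;\R)$ and the classification of finite subgroups of $SO(3)$ --- and the passage from $SL_2$ to $PSL_2$ is indeed forced by the fact that finite fundamental groups such as binary polyhedral groups need not lift. Where your outline is optimistic is in the sentence ``counting the lattice points \dots\ produces $\Delta(\alpha,\beta)\le 3$.'' In practice the norm bound one extracts from the representation-theoretic analysis is $\|\gamma\| \le \max(2s, s+8)$ for a finite filling slope, and converting this into $\Delta \le 3$ requires a substantial case analysis on the shape of the norm polygon together with delicate information about which ideal points of $\widetilde{X}_0$ detect which boundary slopes, the behaviour of $\|\cdot\|$ on components beyond the one containing the discrete faithful character, and separate treatment of the $C$-, $D$-, $T$-, $O$-, $I$-type quotients. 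The jump from the cyclic bound $1$ to the finite bound $3$ is not a matter of a slightly larger constant propagating through the same inequality; it reflects genuinely new phenomena (e.g.\ icosahedral fillings) that have to be excluded or controlled one by one. So your plan is the right one, but the ``principal obstacle'' you anticipate is in fact the bulk of a long paper, not a paragraph.
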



\begin{thm}[Gordon-Luecke,  \cite{Gordon-Luecke-3}]\label{thm:reducible surgery}
Let $M$ be a compact orientable irreducible 3-manifold with torus boundary. If $r,s$ are two slopes on $\partial M$ such that $M(r)$  and $M(s)$ are both reducible, then $\Delta(r,s)\leq 1$. 
\end{thm}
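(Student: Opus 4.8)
The plan is to use the combinatorial intersection-graph machinery of Gordon and Luecke, rather than the character-variety methods that underlie the cyclic and finite surgery theorems, since reducible fillings do not directly produce $\mathrm{PSL}_2(\mathbb{C})$ representations. The starting point is to convert the two reducing spheres into planar surfaces in $M$. Since $M$ is irreducible, an essential sphere $\widehat{P}\subset M(r)$ cannot be isotoped off the surgery solid torus $V_r$; I would choose $\widehat{P}$ meeting the core of $V_r$ transversally in a minimal number of points. Then $P := \widehat{P}\cap M$ is a properly embedded planar surface in $M$ whose boundary consists of $p\ge 1$ parallel curves of slope $r$ on $\partial M$, and $\widehat{P}$ is recovered by capping each boundary circle with a meridian disk of $V_r$. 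Running the same argument on $M(s)$ produces a planar surface $Q\subset M$ with $q\ge 1$ boundary curves of slope $s$.

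Next I would set up the two labelled intersection graphs. After an isotopy removing circle components of $P\cap Q$ that are inessential on either surface, and assuming $|P\cap Q|$ has been minimized, the arcs of $P\cap Q$ define a graph $G_P$ on the sphere $\widehat{P}$ (fat vertices are the capping disks, edges are the arcs) and dually a graph $G_Q$ on $\widehat{Q}$. Because $\Delta(r,s)$ is exactly the geometric intersection number of the two slopes, each of the $p$ vertices of $G_P$ has valence $q\,\Delta(r,s)$ and each of the $q$ vertices of $G_Q$ has valence $p\,\Delta(r,s)$; the arc-endpoints around each fat vertex are cyclically labelled by the components of the opposite boundary, and the \emph{parity rule} controls how the labels at the two ends of an edge are related. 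The aim is to show that $\Delta(r,s)\ge 2$ forces enough structure in these graphs to contradict the irreducibility of $M$.

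The combinatorial heart, and the main obstacle, is to locate a \emph{Scharlemann cycle}. Assuming $\Delta(r,s)\ge 2$, a counting argument combining the Euler characteristic of the planar surfaces with the valence bounds shows that one of the graphs, say $G_P$, contains a disk face bounded by a Scharlemann cycle: a cycle of edges all joining the same pair of parallel vertices and carrying the same label pair. Capping such a cycle off inside the corresponding surgery solid torus produces a disk or Möbius band exhibiting a lens-space or $\mathbb{RP}^2$ summand, which contradicts the irreducibility of the surgered manifold and hence of $M$. The genuine work is in ruling out the low-complexity exceptional configurations — those with too few vertices, where the generic counting degenerates — and in carefully treating the interaction between $G_P$ and $G_Q$; once a Scharlemann cycle is produced the contradiction is immediate. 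It follows that $\Delta(r,s)\le 1$.
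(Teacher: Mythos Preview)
First, note that the paper does not supply a proof of this theorem at all: it is quoted as a result of Gordon and Luecke and cited to \cite{Gordon-Luecke-3}, so there is no in-paper argument to compare against. Your outline does follow the general architecture of the original Gordon--Luecke proof (planar surfaces from reducing spheres, labelled intersection graphs $G_P$ and $G_Q$, parity rule, search for Scharlemann cycles), so at the level of strategy you are on the right track.

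However, there is a genuine gap in the final step. You write that a Scharlemann cycle ``produces a disk or M\"obius band exhibiting a lens-space or $\mathbb{RP}^2$ summand, which contradicts the irreducibility of the surgered manifold and hence of $M$.'' But neither $M(r)$ nor $M(s)$ is irreducible---both are \emph{assumed} reducible---so exhibiting a lens-space summand in one of them is not a contradiction. (Nor does it contradict irreducibility of $M$: the summand lives in the filled manifold, not in $M$.) This is exactly why the reducible/reducible case is harder than, say, reducible versus $S^3$: you cannot simply invoke the standard ``Scharlemann cycle implies a lens-space summand'' punchline.

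What the Scharlemann cycle actually buys you in Gordon--Luecke's argument is a reduction of complexity: a Scharlemann cycle in $G_P$ with label pair $\{i,i{+}1\}$ lets you tube the two meridian disks $D_i, D_{i+1}$ of $V_s$ along an annulus in $\partial V_s$ and then compress using the face of the cycle, producing a new essential sphere in $M(s)$ meeting the core of $V_s$ in fewer points than $\widehat{Q}$ did. This contradicts the minimality of $q$, not any irreducibility hypothesis. Making this work requires care in how $\widehat{P}$ and $\widehat{Q}$ are chosen (minimality alone is not enough; one also arranges that no proper subset of the meridian disks already caps off to an essential sphere), and the low-complexity cases you allude to are handled by a delicate interplay between the two graphs rather than a single Scharlemann cycle. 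Your sketch would need to replace the final paragraph with this minimality-violation mechanism to become a correct outline.
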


We summarize all the results about the bounds on $\Delta (r,s)$ for $r,s \in E(M)$  in table \ref{distance-table}. 
%
\begin{table}[h!] \label{distance-table}
\begin{center}
\caption{Distance table.}
\end{center}
\begin{center}
\begin{tabular}{|c|c|c|c|c|c|} 
\hline 
\rule[-1ex]{0pt}{2.5ex}  & reducible & cyclic & finite & toroidal & small Seifert\\ 
\hline 
\rule[-1ex]{0pt}{2.5ex} reducible & 1 & 1 & 1 & 3 & 4\\ 
\hline 
\rule[-1ex]{0pt}{2.5ex} cyclic &   & 1 & 2 &  8 & 8\\ 
\hline 
\rule[-1ex]{0pt}{2.5ex} finite &   &   & 3 &  8 & 8\\ 
\hline 
\rule[-1ex]{0pt}{2.5ex} toroidal &   &   &   & 8  & 8\\ 
\hline 
\rule[-1ex]{0pt}{2.5ex} small Seifert &   &   &   &  & 8 \\ 
\hline 
\end{tabular} 
\end{center}
\end{table}

\bigskip

The list of knots in $S^3$ which admit  pair of toroidal slopes at  distance $4$ or more is also known by work of Gordon \cite{punctured-tori} and Gordon and Ying-Qing Wu in \cite{toroidal}.

%
\begin{thm}[Gordon and Ying-Qing Wu] \label{toroidal:theorem 24.4}
A knot $K$ in $S^3$ is hyperbolic and admits two toroidal surgeries
$S^3_K(r_1 )$, $S^3_K(r_2 )$ with $\Delta(r_1,r_2) \geq 4$  if and only if $(K, r_1 , r_2 )$ is equivalent to one of the following, where $n$ is an integer.

(1) $K = L_1(n)$, $r_1 = 0$, $r_2 = 4$.

(2) $K = L_2(n)$, $r_1 = 2-9n$, $r_2 = -2-9n$.

(3) $K = L_3(n)$, $r_1 = -9 - 25n$, $r_2 = -(13/2) - 25n$.

(4) $K$ is the Figure 8 knot, $r_1 = 4$, $r_2 = -4$.
\end{thm}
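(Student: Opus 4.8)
The plan is to translate the existence of two toroidal fillings into a combinatorial problem about a pair of graphs on tori, and then to run the graph-theoretic reduction pioneered by Gordon, Luecke and Wu. First I would set $M = S^3 \setminus \mathrm{int}(\mathcal{N}(K))$ and record that, since $K$ is hyperbolic, $M$ is irreducible, atoroidal and not Seifert fibred, so each toroidal filling $S^3_K(r_i)$ contains an essential torus $\widehat{T}_i$ which must meet the surgery solid torus $V_i$. Choosing $\widehat{T}_i$ to meet the core of $V_i$ minimally and deleting $\mathrm{int}(V_i)$ produces an essential punctured torus $P_i \subset M$ all of whose boundary curves have slope $r_i$; write $p_i$ for the number of boundary components. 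Using that $K \subset S^3$ gives $H_1(M)\cong\Z$, the toroidal slopes $r_i$ are already forced to be integral or half-integral, consistent with the half-integer slope $-13/2$ appearing in family~(3). Isotoping $P_1,P_2$ to meet transversely and minimally, with $\partial P_1\cap\partial P_2$ realising $\Delta=\Delta(r_1,r_2)$ on $\partial M$, the two surfaces carry \emph{intersection graphs} $G_1\subset\widehat{T}_1$ and $G_2\subset\widehat{T}_2$: the fat vertices of $G_i$ are the capped-off components of $\partial P_i$ (the meridian disks of $V_i$), and the edges are the arcs of $P_1\cap P_2$. A direct count of the $p_1 p_2\Delta$ points of $\partial P_1\cap\partial P_2$ shows each vertex of $G_1$ has valence $p_2\Delta$ and each vertex of $G_2$ has valence $p_1\Delta$.

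The engine of the argument is the interplay between the two graphs through the parity rule and the combinatorics of Scharlemann cycles. I would label the endpoints of edges around each fat vertex by the boundary component they hit, record the orientations induced by a fixed homology class on $\partial M$, and invoke the parity rule relating same-label edges in one graph to reversing edges in the other. Because $\Delta\geq 4$ forces the valences $p_j\Delta$ to be large, standard Euler-characteristic counting on $\widehat{T}_1$ and $\widehat{T}_2$ produces many disk faces, hence Scharlemann cycles and, in the extremal range, stronger configurations such as extended Scharlemann cycles or a \emph{great web}. Since a Scharlemann cycle bounds a disk whose boundary lies on the genus-one surface $\widehat{T}_i$, its presence sharply restricts $p_i$ and pins down the local picture; this is exactly where the ambient surfaces being tori, rather than of higher genus, is essential.

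Running this reduction collapses the admissible pairs $(p_1,p_2)$ and the isomorphism types of $G_1,G_2$ to a short finite list. For each surviving configuration I would reconstruct the gluing, recovering $M$ together with the two slopes, and recognise the resulting knot exterior. This identification yields the three one-parameter families $L_1(n),L_2(n),L_3(n)$, each obtained by adding $n$ full twists along an unknotting circle (equivalently, by $1/n$ surgery on an auxiliary unknotted component of a fixed link), together with the sporadic figure-8 example; the stated slopes then follow by computing the boundary slope of $P_i$ and tracking its dependence on the twisting parameter $n$. The converse direction is the easier half: one simply constructs each family explicitly and verifies directly that $S^3_K(r_1)$ and $S^3_K(r_2)$ are toroidal with the asserted distance.

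The main obstacle is unmistakably the middle step: the exhaustive case analysis of the intersection graphs. The large-distance hypothesis generates many edges and hence many candidate Scharlemann-cycle and web configurations, and excluding all but finitely many demands careful bookkeeping with the parity rule, control of the contribution of each disk and bigon face, and delicate arguments ruling out $S^1$-components of $P_1\cap P_2$ as well as non-minimal intersections. This is precisely the technically heavy content of the Gordon--Wu program, and a fully rigorous treatment would follow their casework rather than any single clean shortcut.
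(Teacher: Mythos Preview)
The paper does not prove this theorem at all: it is quoted as a result of Gordon and Ying-Qing Wu (reference \cite{toroidal}) and is used as a black box in the proof of Theorem~\ref{my-theorem-A}. There is therefore no ``paper's own proof'' to compare your proposal against.

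That said, your outline is a faithful high-level summary of the Gordon--Wu method in \cite{toroidal}: pass to punctured tori $P_1,P_2$ in the knot exterior, build the intersection graphs $G_1,G_2$ on $\widehat T_1,\widehat T_2$, use the parity rule and Scharlemann-cycle technology to constrain $(p_1,p_2)$ and the graph types when $\Delta\geq 4$, and then reconstruct the knot exteriors from the surviving combinatorial data. You are also correct that the hard content is the exhaustive case analysis, which in \cite{toroidal} occupies the bulk of a memoir-length paper; your sketch is an honest description of the strategy but not, of course, a proof.
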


The knots $L_1(n), \ L_2(n)$ and $L_3(n)$ are the knots obtained from the right components of the links $L_1,\ L_2,\ L_3$ in Figure 2 after  $1/n$-surgery on the left components. In the particular case where $\Delta(r_1,r_2) = 4$, then  $K = L_1(n)$, $r_1 = 0$, $r_2 = 4$; or $K = L_2(n)$, $r_1 = 2-9n$, $r_2 = -2-9n$.
\begin{figure}[h!]
\begin{center}
\begin{minipage}{11cm}
\includegraphics[width=110mm]{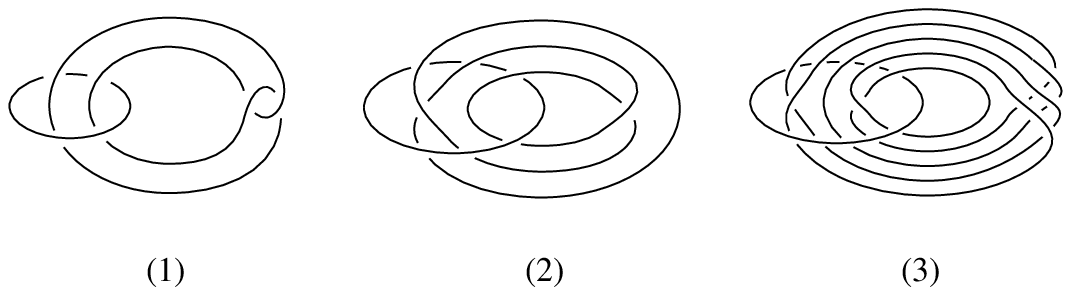}
\end{minipage}
\end{center}
\end{figure}

In term of the slope of the toroidal surgery there is a bound on the denominator $q$ of a toroidal slope $p/q$.

\begin{thm}[Gordon-Luecke, \cite{Gordon-Luecke-4}]\label{denominator-2} 
Let $K$ be a hyperbolic knot in $S^3$ and  suppose that $S^3_K(p/q)$ contains an essential torus. Then $|q| \leq 2$.
\end{thm}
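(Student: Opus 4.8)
The plan is to reduce the statement to a bound on the distance between the filling slope and the meridian, and then to run the Culler--Gordon--Luecke--Shalen style combinatorial intersection-graph argument. Writing $M=S^3\setminus\mathrm{int}\,\mathcal{N}(K)$ for the exterior and $\mu$ for the meridian of $K$, one has $M(\mu)=S^3$ and $\Delta(p/q,\mu)=|q|$, so the assertion $|q|\le 2$ is exactly the statement that the toroidal slope lies at distance at most $2$ from the meridian. First I would take the essential torus $\widehat{T}\subset S^3_K(p/q)$ and isotope it so that it meets the surgery solid torus $V$ in a minimal collection of meridian disks; then $T=\widehat{T}\cap M$ is a properly embedded essential punctured torus in $M$ whose boundary components all have slope $p/q$. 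On the trivial-filling side I would take a meridian disk $D$ of $\mathcal{N}(K)$ (a meridional surface for the filling $M(\mu)=S^3$) and make $T$ and $D$ transverse. Since $\widehat{T}$ is incompressible and $S^3$ is irreducible, all closed curves of $T\cap D$ may be removed by innermost-disk surgery, leaving only arcs.

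Next I would encode the intersection arcs in the two labelled graphs that drive the argument: the graph $G_T$ on the capped torus $\widehat{T}$, whose fat vertices are the meridian disks capping $\partial T$ and whose edges are the arcs of $T\cap D$, together with the dual graph $G_D$ on $D$. Because each boundary curve of $T$ meets $\partial D$ in $\Delta(p/q,\mu)=|q|$ points, the ends of edges around each vertex of $G_T$ receive the labels $1,\dots,|q|$ cyclically, and the two graphs are tied together by the \emph{parity rule}. The structural input I would then exploit is standard: the absence of trivial loops, the fact that a Scharlemann cycle in $G_T$ would force a lens-space or reducible summand and so contradict the irreducibility of $S^3$, and the hypothesis that $\widehat{T}$ is essential together with the hyperbolicity of $K$ (whence $M$ contains no essential torus or annulus), which rules out the degenerate parallel-arc configurations.

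Finally I would assume $|q|\ge 3$ and derive a contradiction by a counting argument. Comparing the two graphs via Euler characteristic --- the torus contributes $\chi=0$ while each disk contributes $\chi=1$ --- and using the valence bounds forced by the label count and the parity rule, one is driven either to a Scharlemann cycle or to a \emph{great web} in one of the graphs; the former contradicts the irreducibility of $S^3$, and the latter contradicts the atoroidality of $K$ via the no-essential-torus property recorded above. Either way $|q|\ge 3$ is impossible, giving $|q|\le 2$.

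I expect the main obstacle to be precisely this last combinatorial analysis: organising the possible configurations of $G_T$ and $G_D$, bounding the number and type of Scharlemann cycles, and controlling the global web structure is exactly where the real work lies, and it is considerably more delicate than the surface-theoretic setup. The setup (minimal position, removal of closed intersections, construction of the labelled graphs) I regard as routine; the delicate point is converting the combinatorics into the sharp numerical bound $|q|\le 2$ rather than merely the weaker bound $\Delta\le 8$ already furnished by Theorem~\ref{Lackenby-Meyerhoff}.
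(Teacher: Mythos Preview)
The paper does not prove this theorem at all: Theorem~\ref{denominator-2} is quoted from Gordon and Luecke \cite{Gordon-Luecke-4} as a known result and is used as a black box in the proof of Theorem~\ref{my-theorem-A}. There is therefore no ``paper's own proof'' to compare against; any proof you supply is necessarily going beyond what the present paper does.

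That said, your outline is in the right spirit (this is indeed a CGLS--style intersection-graph argument), but the setup on the $S^3$ side is not correct as written. A single meridian disk $D\subset\mathcal{N}(K)$ lies inside the surgery solid torus, so $D\cap M$ is just the boundary curve and there is no surface in $M$ to intersect with $T$; equivalently, your graph $G_D$ would have a single vertex and carry no useful combinatorics. What Gordon and Luecke actually use on the $S^3$ side is a \emph{thin-position level sphere} for $K$: putting $K$ in thin position and taking a suitable level $2$-sphere $\widehat Q\subset S^3$ yields, after removing $\mathcal{N}(K)$, a properly embedded planar surface $Q\subset M$ with several meridional boundary components and the key property that no arc of $T\cap Q$ is boundary-parallel in $Q$. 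It is the interplay between the graph on the punctured torus $\widehat T$ and the graph on this multiply-punctured sphere $\widehat Q$ --- together with the Scharlemann-cycle and parity-rule analysis you describe --- that produces the bound $|q|\le 2$. Without the thin-position input your argument cannot get started, so if you want to reconstruct the proof rather than cite it, that is the missing ingredient.
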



For the case where the slope is non-integral we have a complete understanding of toroidal surgeries which is given by the following theorem.

\begin{thm}[Gordon and Luecke, \cite{Gordon-Luecke-2}]\label{non-integral:theorem 1.1}
 Let $K$ be a hyperbolic knot in $S^3$ that admits a non-integral surgery containing an incompressible torus. Then $K$ is one of the Eudave-Mu\~noz knots $k(l , m, n, p)$
and the surgery is the corresponding half-integral surgery.
\end{thm}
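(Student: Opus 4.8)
The plan is to argue in two stages: first cut the problem down to the only possible non-integral case, and then carry out the combinatorial analysis that actually pins down the knot. The first stage is immediate from what is already available. Since the surgery is non-integral, write the slope as $p/q$ with $|q|\geq 2$. By Theorem~\ref{denominator-2}, a hyperbolic knot whose surgery contains an essential torus must have $|q|\leq 2$, so in fact $q=2$ and the slope is half-integral, say $p/2$ with $p$ odd. Thus the only non-integral toroidal slopes to analyse are the half-integral ones, and it remains to identify the underlying knots. The ``easy'' half of the statement --- that each Eudave-Mu\~noz knot $k(l,m,n,p)$ genuinely admits such a surgery --- is supplied by Eudave-Mu\~noz's original tangle construction, in which the essential torus in the filled manifold arises visibly from the tangle decomposition; so the real content is the \emph{completeness} of the list.

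For completeness I would set up the intersection-graph machinery of the Culler--Gordon--Luecke--Shalen school. Let $M=S^3\setminus\mathcal{N}(K)$, so that $M(\mu)\cong S^3$ for the meridian $\mu$ while $M(p/2)$ contains the essential torus $\hat T$, with $\Delta(\mu,p/2)=2$. Isotope $\hat T$ to meet the core $K^{*}$ of the surgery solid torus $V_{p/2}$ minimally, so that $T=\hat T\cap M$ is an essential punctured torus whose $t$ boundary curves all have slope $p/2$. Comparing $T$ with a complete meridian disk system of the other solid torus $V_{\mu}$ (whose core is $K$) produces two labelled fat-vertex graphs: $G_T$ on the genus-one surface $\hat T$, and $G_Q$ on the $2$-sphere $\hat S\subset S^3$ obtained by capping off that meridian disk. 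I would then impose the standard constraints: the parity rule relating edge labels across $G_T$ and $G_Q$, the absence of trivial loops (forced by incompressibility of $\hat T$ and irreducibility of both filled manifolds), and the exclusion of Scharlemann cycles in $G_Q$ --- such a cycle would produce a reducible or lens-space summand, which the toroidal, irreducible target cannot contain. Feeding these into the Euler-characteristic budget for $\chi(\hat T)=0$ and $\chi(\hat S)=2$ forces $t$ to be small and the admissible graphs into finitely many combinatorial types.

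The main obstacle is precisely this combinatorial case analysis: translating the a priori unbounded number of intersection points $t$ and their cyclically ordered edge patterns into a finite, fully controlled list is where essentially all the difficulty lies, and it is the part that cannot be shortened by appealing to the earlier cited theorems. Once the admissible graphs are enumerated, each one reconstructs a two-string tangle decomposition of $(S^3,K)$ cut out by $\hat T\cap M$, together with its induced slope data. Reading off the tangle slopes and twisting parameters and matching them term by term against Eudave-Mu\~noz's parametrised family then yields $K=k(l,m,n,p)$ and identifies $p/2$ as exactly the half-integral slope distinguished by that construction, which completes the identification and hence the theorem.
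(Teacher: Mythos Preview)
The paper does not prove this theorem at all: it is quoted as a result of Gordon and Luecke and cited to \cite{Gordon-Luecke-2}, with no argument given. In the present paper Theorem~\ref{non-integral:theorem 1.1} is used only as a black box inside the proof of Theorem~\ref{my-theorem-A}, to rule out the half-integral toroidal possibility. So there is nothing to compare your proposal against here.

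That said, a brief comment on your sketch as an outline of the Gordon--Luecke argument. The first reduction, via Theorem~\ref{denominator-2}, to the half-integral case is correct and is indeed how one begins. Your description of the two-graph set-up on $\hat T$ and $\hat S$ with $\Delta=2$ is the right framework. But the sentence about ``exclusion of Scharlemann cycles in $G_Q$'' is not accurate: Scharlemann cycles are not forbidden in this situation, and in fact their presence and the lens-space pieces they generate are exploited throughout the analysis rather than ruled out. More importantly, you are candid that the combinatorial case analysis is ``the part that cannot be shortened'', and that is precisely the point: the Gordon--Luecke paper is long and technical, and what you have written is a plan rather than a proof. For the purposes of this paper, simply citing \cite{Gordon-Luecke-2} is the intended and appropriate move.
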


\subsection{Cosmetic surgery}

\begin{defn}  
Two Dehn fillings $M(\alpha)$ and $M(\beta)$, where $\alpha \ne \beta$, are called cosmetic if there is a homeomorphism $h:M(\alpha)\to M(\beta)$. They are called truly cosmetic if $h$ can be chosen to be orientation-preserving.
We also call two Dehn surgeries cosmetic (resp. truly cosmetic) if the corresponding Dehn fillings are cosmetic (resp. truly cosmetic).
\end{defn}

\begin{exa}
Here are some examples of cosmetic filling for two distinct slopes.
\begin{itemize}
\item[•] If $K$ is an amphicheiral knot in $S^3$ and $M=S^3\setminus \nb( K)$, then $M(\alpha)$ is orientation reversing homeomorphic to $M(-\alpha)$.

\item[•]It was shown by Mathieu \cite{Mathieu} that if $M$ is the complement of the trefoil knot in $S^3$ then we have an infinite family of pairs of distinct slopes which give homeomorphic manifolds. Precisely, for any positive integer $k$,
$$M\left(\frac{18k+9}{3k+1}\right)\cong -M\left(\frac{18k+9}{3k+2}\right).$$
These Dehn filling manifolds are Seifert fibred with normalized Seifert invariants \\ $(0;k-3/2;(2,1),(3,1),(3,2))$. Such manifolds do not admit orientation-reversing homeomorphisms. Therefore the fillings are not truly cosmetic.

\item[•]  If $M$ is the complement of the unknot in $S^3$, $M$ is a solid torus, then the Dehn filling manifolds are \textit{lens spaces} and 
$$
M\left(p / q_1\right) \cong+ M\left(p/q_2\right) \ \ \ \text{iff \ \ $q_2 \equiv q_1^{\pm 1}$ [mod] $p$},
$$
for pairs of relatively prime integers $(p,q_1)$ and $(p,q_2)$.
\end{itemize}
\end{exa}

For the first and the third examples one can find a homeomorphism of $M$ which takes one slope to the other. 

For the case $b_1(Y)>0$ and the core of the Dehn filling is homotopically trivial in $Y$ the following result was proved by Lackenby.

\begin{thm}[Lackenby, \cite{Lackenby}] Let $Y$ be a compact oriented 3-manifold with $H_1(Y,\Q)\neq 0$. Let $K$ be a homotopically trivial knot in $Y$, such that $M=Y\setminus \nb(K)$ is irreducible and atoroidal. Let $M(p/q)$ be the Dehn filling along $K$ with slope $p/q$. Then there is a natural number $C(Y,K)$ which depends only on $Y$ and $K$ such that, if $|q| > C(Y,K)$ then
 $M(p/q)\  \textnormal{ \textit{ is orientation-preserving homeomorphic to}} \ M(p'/q')\  \textnormal{iff}\    p/q=p'/q'.$

\end{thm}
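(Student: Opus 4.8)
The plan is to reduce the ``only if'' direction (the ``if'' direction being trivial) to the rigidity of hyperbolic Dehn filling, and then to extract the slope equality from a homological analysis of the boundary torus.

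First I would set up the geometry and homology of $M$. Since $K$ is homotopically trivial it is in particular null-homologous, so it has a canonical (Seifert-framed) longitude $\lambda$, and $\{\mu,\lambda\}$ is a basis of $H_1(\partial M;\Z)$ with respect to which $i_*\colon H_1(\partial M)\to H_1(M)\cong H_1(Y)\oplus\Z\langle\mu\rangle$ sends $\lambda\mapsto 0$ and $\mu$ to a generator of the $\Z$-summand. In particular $\ker i_* = \Z\langle\lambda\rangle$, so $\lambda$ is characterized homologically and is preserved up to sign by \emph{every} self-homeomorphism of $M$. Next, $M$ is irreducible and atoroidal with a single torus boundary, so by geometrization it is hyperbolic or a (small) Seifert fibred space; the Seifert fibred possibility would make $Y=M(\mu)$ a Seifert fibred manifold with $b_1=0$, contradicting $b_1(Y)>0$, so $M$ is hyperbolic. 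I regard this step as routine rather than the main difficulty.

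I would then invoke rigidity. By Thurston's hyperbolic Dehn filling theorem (Theorem~\ref{Thurston-Theorem}) there is a constant $C_1$ such that whenever $|q|>C_1$ the filling $M(p/q)$ is hyperbolic and the core $\gamma_{p/q}$ of the filling solid torus is its unique shortest geodesic; moreover the length of $\gamma_{p/q}$ tends to $0$ as $|q|\to\infty$, uniformly in $p$, since the Euclidean length of the slope $p\mu+q\lambda$ in the maximal cusp tends to infinity. Assuming $|q|>C$ (with $C\ge C_1$ to be fixed) and $M(p/q)\cong M(p'/q')$ via an orientation-preserving homeomorphism $h$, the target $M(p'/q')$ is hyperbolic, so by Mostow--Prasad rigidity $h$ is isotopic to an orientation-preserving isometry carrying $\gamma_{p/q}$ to the shortest geodesic of $M(p'/q')$. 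The crux, and the step I expect to be the main obstacle, is to guarantee that this shortest geodesic is the filling core $\gamma_{p'/q'}$, so that $h$ restricts to a self-homeomorphism of $M$. This needs a quantitative input from hyperbolic Dehn filling (the drilling inequality of Brock--Bromberg, or the Neumann--Zagier length estimates together with a positive lower bound on the systole of the fixed cusped manifold $M$): once $C$ is large enough that $\gamma_{p/q}$ is shorter than half the systole of $M$, the short systole of the isometric copy $M(p'/q')$ cannot be inherited from a geodesic of $M$ and must be its own filling core.

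Granting this, $h$ maps $\gamma_{p/q}$ to $\gamma_{p'/q'}$ and restricts to an orientation-preserving homeomorphism $g\colon M\to M$ taking $p\mu+q\lambda$ to $\pm(p'\mu+q'\lambda)$. By the homological characterization above, $g_*(\lambda)=\pm\lambda$, so $g_*(\mu)=a\mu+c\lambda$ and $g_*(\lambda)=d\lambda$ with $a,d\in\{\pm1\}$. Since $g$ is orientation-preserving it preserves the intersection form on $\partial M$, forcing $ad=1$ and hence $a=d$; and since $M$ is hyperbolic its isometry group is finite, so $g$ has finite order, whence expanding $g_*^{\,n}=\Id$ with $a=d$ forces $c=0$. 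Therefore $g_*=\pm\Id$ on $H_1(\partial M)$, giving $p'\mu+q'\lambda=\pm(p\mu+q\lambda)$ and $p/q=p'/q'$, as required. I would emphasize that both hypotheses feed into this final step: orientation-preservation yields $a=d$, and finiteness of the isometry group yields $c=0$; dropping orientation would permit the order-two symmetries $a=-d$ with arbitrary $c$, which is precisely where genuinely cosmetic pairs could otherwise survive.
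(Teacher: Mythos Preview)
The paper does not prove this theorem; it is quoted as a result of Lackenby, and the only indication of method is the sentence immediately following it: ``Taut sutured manifold theory is used to construct the bound $C(Y,K)$.'' Your approach is therefore not being compared to a proof in the paper, and it is also entirely different from Lackenby's: you are running the hyperbolic--rigidity argument of Bleiler--Hodgson--Weeks (which the paper records separately as a proposition a few paragraphs later) rather than a sutured--manifold argument.

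As an independent sketch your outline is largely sound, with two caveats. First, your dismissal of the Seifert case is misstated: closed Seifert fibred manifolds can certainly have $b_1>0$, so ``$Y$ would be Seifert fibred with $b_1=0$'' is not a valid implication. The correct argument is already implicit in your setup: since $K$ is null-homologous you have $H_1(M)\cong H_1(Y)\oplus\Z$, hence $b_1(M)=b_1(Y)+1\ge 2$, whereas every atoroidal Seifert fibred space with a single torus boundary component (solid torus, twisted $I$--bundle over the Klein bottle, or a Seifert space over the disc with two exceptional fibres) has $b_1\le 1$; so $M$ is hyperbolic by Thurston's hyperbolization for Haken manifolds. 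Second, the step you correctly flag as the crux --- that the unique short geodesic of $M(p'/q')$ is its filling core even with no a priori bound on $|q'|$ --- is genuinely the content of the proof in this approach. It can be made rigorous (the non-core length spectrum of $M(\alpha)$ is uniformly bounded below over all non-exceptional slopes $\alpha$, by geometric convergence for long slopes and finiteness for the remaining ones), but this is exactly the quantitative control that Lackenby instead extracts, and in greater generality, from taut sutured manifold hierarchies. Your final homological step, using $\ker i_*=\Z\langle\lambda\rangle$, orientation to get $ad=1$, and finiteness of the isometry group to force $c=0$, is correct.
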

The assumption that $K$ is homotopically trivial can be dropped and replaced by  $K$ homologically trivial and $Y$ reducible or $K$ having finite order in $\pi_1(Y)$ \cite{Lackenby}. Taut sutured manifold theory is used to construct the bound $C(Y,K)$.

\begin{thm}[Wu, \cite{Wu}\label{Wu-theorem}] Let $r$ and $r'$ be two distinct rational numbers with $rr'> 0$, let $K$ be a non-trivial knot in an $L$-space $\Z$-homology sphere $Y$ and let $M=Y\setminus \nb(K)$. Then $M(r)\ncong M(r')$.
\end{thm}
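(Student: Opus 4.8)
The plan is to argue by contradiction, using the fact that an orientation-preserving homeomorphism preserves the oriented Heegaard Floer data (the correction terms and the rank of $\hfred$) and the Casson--Walker invariant, and then to exhibit one of these as a quantity that is \emph{strictly monotone} in the surgery coefficient along slopes of a fixed sign. Suppose such a homeomorphism $M(r)\cong M(r')$ existed with $r\neq r'$ and $rr'>0$. Writing $r=p/q$ and $r'=p'/q'$ in lowest terms, I would first extract the homological constraint: since $K$ lies in a $\Z$-homology sphere, $|H_1(M(p/q);\Z)|=|p|$, so the homeomorphism forces $|p|=|p'|$. Because $rr'>0$ the two slopes share a sign, and after possibly replacing $(Y,K)$ by its mirror $(-Y,\bar K)$---again a knot in an $L$-space $\Z$-homology sphere---I may assume $p=p'>0$ and $q,q'>0$ with $q\neq q'$. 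Thus the problem reduces to two positive slopes $p/q$ and $p/q'$ sharing the numerator $p$.

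Next I would feed this into the Ozsv\'ath--Szab\'o rational surgery machinery. Because $Y$ is an $L$-space, the mapping-cone description expresses both the correction terms and the total rank of $\hfhat(M(p/q))$ in terms of finitely many local pieces of $\cfkinfty(K)$, each appearing with multiplicity growing linearly in $q$. Concretely, there is a non-increasing sequence of non-negative integers $V_0\geq V_1\geq\cdots$ (with $V_j-V_{j+1}\in\{0,1\}$ and $V_j=0$ for $j\gg 0$) with
$$d\big(M(p/q),i\big)=d\big(Y^{\mathrm{triv}}_{p/q},i\big)-2\,\max\{V_{\lfloor i/q\rfloor},\,V_{\lceil(p-i)/q\rceil}\},\qquad 0\le i\le p-1,$$
where $Y^{\mathrm{triv}}_{p/q}$ is the corresponding filling of the unknot complement, and $\rank\hfred(M(p/q))$ is of the form $2q\cdot c(K)+(\text{bounded correction})$ for a non-negative knot invariant $c(K)$. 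In parallel I would record the Casson--Walker surgery formula $\lambda(M(p/q))=\lambda(Y)+\tfrac{q}{2p}\Delta_K''(1)+\varphi(p,q)$, whose leading term in $q$ is governed by $\Delta_K''(1)$. An orientation-preserving homeomorphism forces equality of $\rank\hfred$, of the families of correction terms, and of $\lambda$ at $q$ and at $q'$.

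The heart of the argument, and the step I expect to be hardest, is to turn one of these equalities into a contradiction, and this is exactly where the common-sign hypothesis is indispensable: for fixed $p>0$ and $q$ ranging over \emph{positive} integers the $q$-dependence is monotone, with no cancellation between an increasing and a decreasing regime (the source of the delicate opposite-sign phenomena of Theorem~\ref{Ni-Wu-theorem}). The plan is to assemble from the data above a homeomorphism invariant that is \emph{strictly} increasing in $q$ whenever $K$ is nontrivial, so that equality at $q\neq q'$ is impossible. The genuine difficulty is establishing strict monotonicity for \emph{every} nontrivial $K$: one must rule out the degenerate case in which all the surviving knot invariants coincide with those of the unknot, and I anticipate this requires a case division---knots with $\Delta_K''(1)\neq 0$ handled through the linear Casson--Walker term, and the remaining knots through the growth rate $c(K)$ of $\rank\hfred$---together with careful bookkeeping of the $\mathrm{Spin}^c$-labelling and the cancellation of the lens-space / Dedekind-sum base terms, so that only the sign-definite knot contributions remain.
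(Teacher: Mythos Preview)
This theorem is stated in the paper as a result of Zhongtao Wu, cited to \cite{Wu}, and is not proved in the paper; there is no in-paper argument to compare yours against.

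Measured against Wu's actual proof, your overall architecture is correct: reduce via $|H_1|$ and mirroring to $p=p'>0$ with $0<q<q'$, then use the Ozsv\'ath--Szab\'o mapping-cone formula to produce a homeomorphism invariant that is strictly monotone in $q$. The gap is in how you propose to secure strict monotonicity. Your Casson--Walker branch does not work as stated: the surgery formula carries a Dedekind-sum term that is not monotone in $q$, so even when $\Delta_K''(1)\neq 0$ the equality of $\lambda$ at $q$ and $q'$ is a single relation, not a contradiction. In the complementary branch you give no mechanism forcing your coefficient $c(K)$ to be positive; you correctly flag this as the crux but leave it unresolved, and the two branches together still do not exclude the possibility that $\Delta_K''(1)=0$ and $c(K)=0$ simultaneously.

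Wu avoids the split by working with a single invariant, $\rank\hfhat(M(p/q))$, throughout. The mapping-cone description shows this is strictly increasing in $q$ (for fixed $p>0$ and $q>0$) provided $\hfkhat(Y,K)$ is not that of the unknot. The decisive input you are missing is that knot Floer homology detects the Seifert genus for null-homologous knots, hence detects the unknot in an $L$-space integer homology sphere: $K$ nontrivial forces $g(K)>0$, so $\hfkhat(Y,K,g(K))\neq 0$ with $g(K)>0$, and the $q$-dependence of the rank is then genuinely strict. With this one fact in hand, no case division and no Casson--Walker input are needed.
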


Yi Ni has also studied cosmetic surgeries for manifolds $Y$ with $b_1(Y)>0$. For this he used the Thurston norm with Heegaard Floer homology. 

\begin{thm}[Yi Ni \cite{Ni-cosmetic-Thurston-norm}]\label{thm: Cosmetic Thurston Norm}
Suppose $Y$ is a closed $3$--manifold with $b_1(Y)>0$. Let $K$ be
a null-homologous knot in $Y$, so that the inclusion map $Y-K\to Y$
induces an isomorphism $H_2(Y-K)\cong H_2(Y)$ and we can identify
$H_2(Y)$ with $H_2(Y-K)$. Suppose $r\in\mathbb Q\cup\{\infty\}$ and 
let $Y_K(r)$ be the manifold obtained by $r$--surgery on $K$.
Suppose $(Y,K)$ satisfies that
\begin{equation}x_Y(h)<x_{Y-K}(h),\quad \text{for any nonzero
element}\quad h\in H_2(Y). \nonumber
\end{equation} where $x_M$ is the
Thurston norm in $M$. If two rational
numbers $r,s$ satisfy that $Y_{K}(r)\cong\pm Y_{K}(s)$, then
$r=\pm s$.
\end{thm}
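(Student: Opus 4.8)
The plan is to read off, from a homeomorphism $Y_K(r)\cong\pm Y_K(s)$, two homeomorphism invariants that between them pin down the slope; write $r=p/q$ and $s=p'/q'$ in lowest terms. The first step is purely homological. Because $K$ is null-homologous, its Seifert longitude $\lambda$ is null-homologous in $Y-K$ and $H_1(Y-K)\cong H_1(Y)\oplus\Z\langle\mu\rangle$, so filling along $p\mu+q\lambda$ yields $H_1(Y_K(p/q))\cong H_1(Y)\oplus\Z/p$. If $p=0$ then $b_1(Y_K(0))=b_1(Y)+1$, which cannot equal $b_1(Y_K(p'/q'))=b_1(Y)$ for $p'\neq0$; moreover $p=0$ singles out one slope, so it produces no pair of distinct cosmetic slopes. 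Thus I may assume $p,p'\neq0$, and comparing the (finite) torsion subgroups $\mathrm{Tor}\,H_1(Y_K(p/q))\cong \mathrm{Tor}\,H_1(Y)\oplus\Z/p$ across the homeomorphism, whose orders are $|\mathrm{Tor}\,H_1(Y)|\cdot|p|$, forces $|p|=|p'|$. This matches numerators for free; the real work is to recover $|q|$.

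To see the denominator I would bring in Heegaard Floer homology, using the norm hypothesis to guarantee that the knot carries nontrivial Floer data. Since Heegaard Floer homology detects the Thurston norm (via the adjunction inequalities of Ozsv\'ath--Szab\'o and the genus-detection theorem for $\hfk$), the strict inequality $x_Y(h)<x_{Y-K}(h)$ for every nonzero $h$ says precisely that the top Alexander-graded part of $\hfk(Y,K)$ is nontrivial in the relevant directions: no minimal-genus representative of $h$ in $Y$ can be isotoped off $K$. Equivalently, the knot Floer complex of $K\subset Y$ differs, in its top filtration levels, from the unfiltered complex of a class that ignores $K$.

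With this nontriviality in hand I would apply the rational surgery (mapping cone) formula, which expresses the Floer homology of $Y_K(p/q)$ in each $\mathrm{Spin}^c$ structure in terms of the knot Floer complex of $K$ together with $|q|$ translated copies of its large-surgery pieces. The content of the previous paragraph is exactly what makes these pieces differ from their ``knot-free'' counterparts, so that the resulting Floer-theoretic package of $Y_K(p/q)$ --- its distribution over $\mathrm{Spin}^c$ structures together with the associated ranks --- depends on, and recovers, $|q|$ once $|p|$ is fixed. The homeomorphism $Y_K(r)\cong\pm Y_K(s)$ carries this package of $Y_K(r)$ to that of $Y_K(s)$ (the Thurston norm and the total Floer rank are insensitive to the ambient orientation, so the $\pm$ is harmless), whence $|q|=|q'|$. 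Combined with $|p|=|p'|$ this gives $|r|=|p|/|q|=|p'|/|q'|=|s|$, that is $r=\pm s$.

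The hard part is the third paragraph: converting ``$K$ changes the top of the Floer complex'' into a genuine, monotone dependence on $|q|$, and checking that no accidental coincidence of the $\mathrm{Spin}^c$-graded data can occur for $q\neq q'$ with the same $|p|$. This is delicate because $b_1(Y)>0$ forces the use of (possibly twisted) coefficients and admissibility hypotheses throughout the surgery formula, and because the naive Thurston norm of $Y_K(p/q)$ on $H_2(Y)$ is bounded above by $x_{Y-K}$ and so by itself need not see $q$; the denominator must instead be extracted from the finer graded invariants. The norm-increasing hypothesis is the precise condition that prevents these invariants from degenerating, forcing the slope to enter nontrivially and thereby making $|q|$ recoverable.
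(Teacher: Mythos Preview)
This theorem is quoted in the paper without proof, as a citation to Ni \cite{Ni-cosmetic-Thurston-norm}; there is no in-paper argument to compare against.

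Evaluated on its own, your homological first step recovering $|p|=|p'|$ is correct, and your instinct that the norm hypothesis forces the knot Floer complex to be nontrivial at the top filtration level is the right one. The genuine gap is your third paragraph. You assert that the $\mathrm{Spin}^c$-graded Floer package of $Y_K(p/q)$ ``depends on, and recovers, $|q|$,'' but you never name a concrete invariant that does this, nor argue that it is injective in $|q|$ for fixed $|p|$. As you yourself concede in the final paragraph, this is ``the hard part'' --- and in fact it is the entire content of the theorem; everything preceding it is soft. What is missing is an explicit computation: one must isolate a specific piece of the Floer homology of $Y_K(p/q)$ (for instance, in $\mathrm{Spin}^c$ structures extremal with respect to a chosen nonzero $h\in H_2(Y)$), use the norm-increasing hypothesis to see that the relevant knot-Floer input is nonzero while the ambient contribution from $Y$ vanishes, and then extract from the mapping-cone formula a numerical quantity that genuinely changes with $|q|$. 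Without carrying out such a computation, invoking ``the Floer-theoretic package'' wholesale does not establish that two distinct values of $|q|$ cannot yield isomorphic graded groups; what you have written is a strategy, not a proof.
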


We can replace the assumption on the Thurston norm with another condition to obtain 
the following.

\begin{thm}[Yi Ni \cite{Ni-cosmetic-Thurston-norm}]
Suppose $Y$ is a closed $3$--manifold with $b_1(Y)>0$. Suppose $K$
is a null-homologous knot in $Y$. Suppose $x_Y\equiv0$, while the
restriction of $x_{Y-K}$ on $H_2(Y)$ is nonzero. Then we have the
same conclusion as Theorem~\ref{thm: Cosmetic Thurston Norm}. Namely, if two
rational numbers $r,s$ satisfy that $Y_{K}(r)\cong\pm Y_{K}(s)$,
then $r=\pm s$.
\end{thm}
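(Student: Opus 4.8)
The plan is to deduce the statement from Theorem~\ref{thm: Cosmetic Thurston Norm} by re-examining which homology classes that argument actually requires. Write $x_Y$ and $x_{Y-K}$ for the two Thurston seminorms, regarded as functions on $H_2(Y)\cong H_2(Y-K)$ via the identification in the hypothesis. Since the Thurston norm is non-negative and any surface realizing $x_{Y-K}(h)$ already sits inside $Y$, one always has $x_Y(h)\le x_{Y-K}(h)$; hence the strict inequality $x_Y(h)<x_{Y-K}(h)$ demanded by Theorem~\ref{thm: Cosmetic Thurston Norm} holds for every class outside the null space $N=\{h\in H_2(Y):x_{Y-K}(h)=0\}$. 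Under the present hypotheses $x_Y\equiv 0$ and $x_{Y-K}|_{H_2(Y)}\not\equiv 0$, so $N$ is a proper subspace and such classes exist. First I would fix one integral class $h_0\in H_2(Y)\setminus N$, for which $0=x_Y(h_0)<x_{Y-K}(h_0)$.

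The heart of the matter is a surgery formula for the Thurston norm, obtained from the fact that link Floer homology detects the Thurston norm, combined with the Heegaard Floer mapping-cone formula for $Y_K(p/q)$. For a null-homologous $K$ and a class $h\in H_2(Y)$ with induced class $h_{p/q}\in H_2(Y_K(p/q))\cong H_2(Y)$, this should express $x_{Y_K(p/q)}(h_{p/q})$ in terms of $x_Y(h)$, $x_{Y-K}(h)$ and the denominator $|q|$. A closed norm-minimizing surface for $h$ in $Y-K$ persists in every filling, giving the a priori bound $x_{Y_K(p/q)}(h_{p/q})\le x_{Y-K}(h)$, while surgering that surface through the filling solid torus can lower the genus by an amount governed by $|q|$ and by the defect $x_{Y-K}(h)-x_Y(h)$. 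The upshot I would extract is that, restricted to the class $h_0$ where the defect is positive, the quantity $x_{Y_K(p/q)}(h_0)$ records the value of $|q|$ through the amount by which the genus drops.

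Now suppose $Y_K(r)\cong\pm Y_K(s)$ with $r=p/q$, $s=p'/q'$; when $p,p'\neq 0$ both fillings share the Betti number of $Y$, and the homeomorphism induces a lattice automorphism $\phi_*$ of $H_2(Y)\cong H_2(Y_K(r))\cong H_2(Y_K(s))$ intertwining the two surgered seminorms, $x_{Y_K(r)}=x_{Y_K(s)}\circ\phi_*$. Because the Thurston norm takes integer values on the lattice $H_2(Y;\Z)$ and is not identically zero off $N$, comparing the two integral seminorms through the automorphism $\phi_*$ should force the $|q|$-data recorded above to agree, i.e. $|q|=|q'|$. Independently, the order of the torsion subgroup of $H_1(Y_K(p/q);\Z)$ equals $|p|$ times that of $H_1(Y;\Z)$ (the distance $|p|$ from the rational longitude, cf.\ Lemma~\ref{watson-lemma}), so the homeomorphism forces $|p|=|p'|$; the case $p=0$ is separated off since it changes $b_1$. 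As $\gcd(p,q)=\gcd(p',q')=1$, the equalities $|p|=|p'|$ and $|q|=|q'|$ give $p/q=\pm p'/q'$, that is $r=\pm s$, the conclusion of Theorem~\ref{thm: Cosmetic Thurston Norm}. The main obstacle is the middle step: establishing the precise surgery formula for the Thurston norm and showing that on $H_2(Y)\setminus N$ it genuinely detects $|q|$ rather than saturating at the upper bound $x_{Y-K}$, together with controlling $\phi_*$, which need not be induced by a homeomorphism of $Y$ or of $Y-K$ and hence need not preserve $x_{Y-K}$ itself — only the surgered norms are intertwined.
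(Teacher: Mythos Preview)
The paper does not contain a proof of this statement. Both this theorem and the preceding Theorem~\ref{thm: Cosmetic Thurston Norm} are quoted as background results from Yi Ni's paper \cite{Ni-cosmetic-Thurston-norm}; they appear in the survey portion of Section~\ref{Exceptional cosmetic surgeries} without proof, and the present paper makes no further use of them. So there is no ``paper's own proof'' against which to compare your proposal.

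As an independent sketch, your outline captures the right architecture of Ni's argument --- a surgery formula for the Thurston norm coming from link/knot Floer homology, applied to a class on which the defect $x_{Y-K}-x_Y$ is positive, together with the elementary homological determination of $|p|$. The weak point you yourself flag is the central one: you have not stated, let alone proved, the precise surgery formula you need, and the informal description (``surgering that surface through the filling solid torus can lower the genus by an amount governed by $|q|$'') is not enough to conclude that $|q|$ is recoverable from the surgered norm. In Ni's actual treatment this step is the whole content of the theorem, and it rests on specific Heegaard Floer computations rather than on a geometric surgery of surfaces. A second issue is that the automorphism $\phi_*$ need not send $h_0$ to a class with the same $x_{Y-K}$-value, so even with a correct formula you would need an argument that compares the two surgered norm balls globally rather than at the single class $h_0$. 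Until those two points are made precise, the proposal is a plausible strategy but not yet a proof.
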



\bigskip
We will be mainly interested in truly cosmetic surgery along hyperbolic knots $K$ in a rational homology sphere $Y$.  By Theorem \ref{Thurston-Theorem}, $Y_K(r)$ is hyperbolic for all except a finite number of slopes $r$ on $\partial \nb(K)$. Let $r$ and $s$ be such hyperbolic slopes.  Assume $Y_K(r)$ is homeomorphic to $Y_K(s)$. Then by Mostow rigidity there is an isometry $h$  between $Y_K(r)$ and $Y_K(s)$. This isometry takes the unique shortest geodesic in $Y_K(r)$ to the unique shortest geodesic in $Y_K(s)$. Apart from a finite number of slopes, the shortest geodesic  is isotopic to the core of the Dehn filling, and if this is true for the slopes $r$ and $s$ we can assume that $h$ takes the core of the Dehn filling in $Y_K(r)$ to the  core of the Dehn filling $Y_K(s)$. Therefore $h$ takes the meridian $r$ to the meridian $s$. In particular $h$ restricts to a homeomorphism of $Y_K$ which takes $r$ to $s$. Moreover a homeomorphism of a one-cusped orientable hyperbolic 3-manifold which changes the slope of some peripheral curve has to be orientation reversing. Therefore the two slopes $r$ and $s$ are not equivalent.  One can  then deduce the following, see \cite{Weeks}. 

\begin{pro}[Bleiler-Hodgson-Weeks,\cite{Weeks}]
Let $M$ be a compact connected oriented hyperbolic 3-manifold with boundary a torus.  Let $r$ and $s$ be distinct slopes on $\partial M$, such that $M(r)$ (resp. $M(s)$) is hyperbolic and the core of the Dehn filling solid torus is isotopic to the shortest geodesic in $M(r)$ (resp. $M(s)$), which we assume is unique.  If $M(r)$ is homeomorphic to $M(s)$, then there is an orientation-reversing homeomorphism of $M$ which takes $r$ to $s$ but no orientation preserving one. In particular, apart from a finite number of slopes, there are no truly cosmetic filling of $M$ with two inequivalent slopes. 
\end{pro}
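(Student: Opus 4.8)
The plan is to realize the homeomorphism $M(r)\cong M(s)$ geometrically via Mostow--Prasad rigidity and then track what it does to the peripheral structure. First I would invoke the hypothesis that both filled manifolds are hyperbolic, so that the given homeomorphism is homotopic, hence isotopic, to an isometry $h\colon M(r)\to M(s)$. The essential input is that in each filled manifold the core of the Dehn filling solid torus is isotopic to the \emph{unique} shortest closed geodesic; an isometry must carry shortest geodesic to shortest geodesic, so after isotopy I may assume $h$ sends the core curve $c_r\subset M(r)$ to the core curve $c_s\subset M(s)$.

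Next I would drill out these cores. Removing a tubular neighborhood of $c_r$ from $M(r)$ recovers $M$ itself (the filling and drilling being inverse operations), and likewise for $c_s$ in $M(s)$. Since $h$ matches the cores, it restricts to a homeomorphism (indeed an isometry of the complete finite-volume structure on $\operatorname{int}M$) of $M$ onto itself. Under the identifications, the meridian of $c_r$ is precisely the filling slope $r$ on $\partial M$ and similarly the meridian of $c_s$ is the slope $s$. Because $h$ carries the meridian disk of $c_r$ to that of $c_s$, the induced self-map of $M$ takes the slope $r$ to the slope $s$ on $\partial M$.

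Finally I would argue the orientation statement. The slopes $r$ and $s$ are distinct, so $h|_{\partial M}$ genuinely moves one slope to a different one. A key fact about a one-cusped orientable hyperbolic $3$-manifold is that any self-homeomorphism preserving orientation acts on the set of slopes through an orientation-preserving automorphism of the peripheral torus, and such an automorphism fixes every slope (equivalently, the orientation-preserving mapping class group of the cusp cross-section acts trivially on isotopy classes of slopes, since a self-map changing a slope would reverse the orientation induced by the cusp torus). Hence a self-map of $M$ that actually changes a peripheral slope must be orientation-reversing. This shows the homeomorphism $M\to M$ taking $r$ to $s$ is orientation-reversing, and consequently $r$ and $s$ are inequivalent; in particular no orientation-preserving homeomorphism $M(r)\cong M(s)$ can exist for such slopes, proving the claim for all but the finitely many excluded slopes.

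The main obstacle is the orientation bookkeeping in the last step: one must argue carefully why an orientation-\emph{preserving} self-homeomorphism of the one-cusped manifold cannot move one slope to a distinct one. The cleanest route is to note that the peripheral torus inherits a boundary orientation from $M$, that the distance (geometric intersection number) $\Delta(r,s)$ is preserved, and that an orientation-preserving automorphism of $H_1(\partial M;\Z)\cong\Z^2$ realized by such a self-map must be conjugate to a power of a Dehn twist fixing the slope, by rigidity of the cusp shape; reversing the induced orientation on the torus is the only way to send $r$ to a genuinely different $s$. I would make this precise using Mostow rigidity applied to the \emph{drilled} manifold $M$ together with the fact that an isometry is determined by its effect on the cusp, so that a slope-changing isometry forces a reflection of the cusp and hence reverses the global orientation.
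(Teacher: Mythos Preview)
Your overall strategy coincides with the paper's: invoke Mostow--Prasad rigidity to upgrade the homeomorphism $M(r)\cong M(s)$ to an isometry, use uniqueness of the shortest geodesic to match the filling cores, drill out to obtain a self-homeomorphism of $M$ carrying $r$ to $s$, and then argue about orientation. The paper's discussion (the paragraph immediately preceding the proposition) does exactly this and, like you, isolates the final assertion---that a slope-changing self-homeomorphism of a one-cusped orientable hyperbolic $3$-manifold must be orientation-reversing---as the crux, which it simply states as a known fact and cites \cite{Weeks}.

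Where your proposal has a genuine gap is in the justification you offer for that last step. The claim that ``the orientation-preserving mapping class group of the cusp cross-section acts trivially on isotopy classes of slopes'' is false: that mapping class group is $SL(2,\Z)$, which acts transitively on primitive classes. Your fallback formulation, that the induced map on $H_1(\partial M;\Z)$ ``must be conjugate to a power of a Dehn twist fixing the slope,'' is also wrong: an isometry of a finite-volume hyperbolic $3$-manifold has finite order, so its action on $H_1(\partial M)$ is a finite-order element of $GL(2,\Z)$, never a nontrivial power of a Dehn twist. The relevant mechanism is that, after applying Mostow rigidity to $M$ itself, an orientation-preserving self-homeomorphism is isotopic to an isometry, which restricts to an orientation-preserving \emph{Euclidean} isometry of the flat cusp torus; hence it acts on $H_1(\partial M)$ by a finite-order element of $SL(2,\Z)$ that fixes the cusp shape parameter. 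You should replace the two incorrect claims by this argument (and be aware that the square and hexagonal cusp shapes require a word of care); the paper does not spell this out either, but your written justifications as they stand are not correct.
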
 

For cosmetic filling on a complete finite volume hyperbolic 3-manifold $M$, the remaining cases  are then:
\begin{itemize}
\item[•] One of the Dehn filling manifolds has a hyperbolic structure but the core of the Dehn filling is not isotopic to the shortest geodesic.
\item[•]  The Dehn filling manifold is not hyperbolic. 
\end{itemize}
The second possibility is the case of exceptional filling. We will focus on this last situation, that is cosmetic surgeries or filling which are also exceptional.

Using Lemma \ref{watson-lemma}, we can deduce the following two preliminary lemmas on cosmetic filling. Let $M$ be a compact, connected, oriented hyperbolic manifold with boundary a torus and assume $b_1(M)=1$. Fix a canonical basis $\left\{\mu,\lambda_M\right\}$  for $H_1(\partial M)$, where $\lambda_M$ is the rational longitude.

\begin{lem}\label{my-lemma-1}
 Let $p/q$ and $p/q'$ be exceptional slopes such that $0<p$ and $q < q'$. If $M(p/q)$ and $M(p/q')$ are homeomorphic then we must be in  one of the following cases:
\begin{multicols}{2}
\begin{itemize}
\item[(a)] $p=1$ and $|q-q'|\leq 8$.
\item[(b)] $p \in \left\{7,5\right\}$ and $q'=q+1$.
\item[(c)] $p\in \left\{4,3\right\}$  and $q'\in \left\{q+1,q+2\right\}$.
\item[(d)] $p=2$ and $q' \in \left\{q+2,q+4\right\}$.
\end{itemize}
\end{multicols}
\end{lem}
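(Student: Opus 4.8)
The plan is to reduce the statement to an elementary case analysis on the numerator $p$, driven by the Lackenby--Meyerhoff distance bound together with the coprimality of $p$ with each denominator.

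I would first record the role of Lemma~\ref{watson-lemma}. In the basis $\{\mu,\lambda_M\}$ the intersection formula gives $\Delta(a\mu+b\lambda_M,\lambda_M)=|a|$, so
$$|H_1(M(p/q);\Z)| = c_M\,p = |H_1(M(p/q');\Z)|.$$
This is exactly why a cosmetic pair of slopes must be equidistant from the rational longitude and hence share a common numerator $p$; the hypotheses $0<p$ and $q<q'$ then fix the orientation of $\mu$ and the labelling of the two slopes, and because both $p/q$ and $p/q'$ are genuine slopes we have $\gcd(p,q)=\gcd(p,q')=1$.

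Next I would compute the separation of the two slopes. The intersection formula yields
$$\Delta(p/q,\,p/q') = |p\,q' - q\,p| = p\,(q'-q),$$
the absolute value disappearing since $p>0$ and $q'>q$. As both slopes are exceptional on the torus $\partial M$ of the hyperbolic manifold $M$, Theorem~\ref{Lackenby-Meyerhoff} forces
$$p\,(q'-q) \le 8.$$

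The remainder is bookkeeping. This inequality rules out $p\ge 9$ outright and, for each $1\le p\le 8$, bounds the gap $q'-q$. I would then impose coprimality: for $p\in\{2,4\}$ both $q$ and $q'$ are odd, so the gap is even; for $p=6$ and $p=8$ the inequality forces $q'-q=1$, which would make $q$ and $q'$ consecutive integers both coprime to an even modulus, an impossibility, so these numerators are excluded entirely. Collecting the surviving possibilities produces precisely the four cases (a)--(d). The only place requiring genuine care is this last step: the distance bound alone is too generous, and it is the primitivity of the two slopes that eliminates $p=6,8$ and pins down the even gaps, so the two constraints must be combined rather than applied in isolation.
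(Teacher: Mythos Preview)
Your proof is correct and follows essentially the same route as the paper: compute $\Delta(p/q,p/q')=p(q'-q)$, apply the Lackenby--Meyerhoff bound $\le 8$, and then run a case analysis on $p$ using coprimality to exclude $p=6,8$ and to force even gaps when $p=2$. Your preamble invoking Lemma~\ref{watson-lemma} to justify the common numerator $p$ is not in the paper's proof (it is implicit in the statement), and your parity remark for $p=4$ is actually slightly sharper than what the paper records for case~(c), but neither affects the argument.
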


\begin{proof}
We have the bound $\Delta(p/q,p/q')=|pq'-qp|=p |q-q'| \leq 8$,  so $p \leq 8$. If $p=1$ then $|q-q'|\leq 8$. If $p \in \left\{8,7,6,5\right\}$ then $|q-q'|\leq 1$ and  $q'=q+1$. On the other hand $p$ and $q$ (resp. $p$ and $q'$) must be  relatively prime, thus since one of $q$ and $q+1$ is even and $p$ cannot be $6$ or $8$.  Similarly if $p\in \left\{4,3\right\}$ then  $|q-q'|\leq 2$ and $q'\in \left\{q+1,q+2\right\}$. If $p=2$ then   $|q-q'|\leq 4$ and $q'\in \left\{q+1,q+2,q+3,q+4\right\}$ but  we must have $q\equiv q'$ $[\text{mod}\ 2]$ so $q'\in \left\{q+2,q+4\right\}$.  
\end{proof}

For the case of reducible or cyclic filling we have the following lemma.

\begin{lem} \label{my-lemma-1bis} Assume the hypothesis of Lemma \ref{my-lemma-1}. If $M(p/q)$ is cyclic or reducible and is homeomorphic to $M(p/q')$ then $p=1$ and $q'=q+1$.
\end{lem}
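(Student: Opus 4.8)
The plan is to upgrade the crude distance bound $\Delta\le 8$ used in Lemma~\ref{my-lemma-1} to the sharp bound $\Delta\le 1$ that is available precisely when the fillings are cyclic or reducible. The essential observation is that ``having cyclic fundamental group'' and ``being reducible'' are both invariants of homeomorphism type. Consequently, the single hypothesis that $M(p/q)$ is cyclic (resp. reducible), combined with the assumption $M(p/q)\cong M(p/q')$, automatically forces $M(p/q')$ to enjoy the same property, so that \emph{both} fillings satisfy the hypotheses of the relevant surgery theorem.

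First I would record the structural facts about $M$ that feed the two theorems. Since $M$ is hyperbolic it is irreducible, boundary irreducible, and not a Seifert fibred space; these are exactly the standing hypotheses of the Cyclic Surgery Theorem of Culler--Gordon--Luecke--Shalen and of Theorem~\ref{thm:reducible surgery} of Gordon--Luecke. I would then split into the two cases. In the cyclic case, because $\pi_1$ is a homeomorphism invariant, $\pi_1(M(p/q'))\cong\pi_1(M(p/q))$ is cyclic, and the Cyclic Surgery Theorem yields $\Delta(p/q,p/q')\le 1$. In the reducible case, reducibility transports across the homeomorphism, so $M(p/q')$ is reducible as well, and Theorem~\ref{thm:reducible surgery} again yields $\Delta(p/q,p/q')\le 1$.

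Finally I would compute the distance in the fixed basis $\{\mu,\lambda_M\}$, in which the slopes $p/q$ and $p/q'$ correspond to $p\mu+q\lambda_M$ and $p\mu+q'\lambda_M$. The intersection formula gives $\Delta(p/q,p/q')=|pq'-qp|=p(q'-q)$, using $0<p$ and $q<q'$. The inequality $p(q'-q)\le 1$ together with $p\ge 1$ and $q'-q\ge 1$ (as $q,q'$ are distinct integers) forces $p=1$ and $q'=q+1$, as claimed.

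There is no genuine obstacle here beyond the one conceptual point that must be stated carefully: the defining property (cyclic $\pi_1$ or reducibility) is assumed only for $M(p/q)$, so one must explicitly invoke the homeomorphism to conclude that $M(p/q')$ shares it before either distance theorem can be applied to the pair.
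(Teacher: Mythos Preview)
Your argument is correct and follows the same route as the paper: transport the cyclic/reducible property across the homeomorphism, apply the Cyclic Surgery Theorem or Theorem~\ref{thm:reducible surgery} to get $\Delta(p/q,p/q')\le 1$, and then read off $p=1$, $q'=q+1$ from $\Delta=p(q'-q)$. The paper's proof is simply a terser version of yours.
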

\begin{proof}
The distance between two reducible slopes or two cyclic slopes is at most one, so $\Delta(p/q,p/q')=|pq'-qp|=p|q'-q| \leq 1$. It follows that $p=1$ and $q'=q+1$.
\end{proof}

 \section{Cosmetic Surgery on $S^3$}\label{Cosmetic Surgery on $S^3$}


\subsection{Results from Heegaard Floer theory}
\label{section:Results from Heegaard Floer theory}



Recall that knot Floer homology associates to a null-homologous knot $K$ a 
$\Z\oplus\Z$--filtered $\Z[U]$-complex $\cfkinfty(Y,K)$, generated over $\Z$ by $(\Ta\cap \Tb) \times (\Z\oplus\Z)$ equipped with a function
$\mathcal{F}: (\Ta\cap \Tb) \times (\Z\oplus\Z)\longrightarrow\Z\oplus \Z$ with the property that $\mathcal{F}(U\cdot [\x;i,j])=(i-1,j-1)$ and $\mathcal{F}([\y;i',j'])\leq \mathcal{F}([\x;i,j])$
for all $\y$ having nonzero coefficient in $\partial{\x}$. The Euler characteristic of this homology is also the Alexander polynomial of the knot $K$. From more details on the subject we refer to [\cite{Holo-disks-closed-3mfd}, \cite{Applications}, \cite{Holo-disks-knot}, \cite{Intro-HFH}, \cite{Lectures-HFH}, \cite{Rasmussen-Knot}]


When $K$ is a knot in $S^3$ admitting an $L$-space surgery, the following  characterization of $\hfk(Y,K)$ will be useful. It was proved in [\cite{OSzLensSpace} theorem 1.2]. 
\begin{pro}\label{theorem-HFK-for-L-space-surgery}
Let $K$ be a knot in $S^3$.  If there is a rational number $r$ for which $Y_r(K)$ is an $L$-space, then there is an
increasing sequence of integers
$n_{-k}<...<n_k$
with the property that $n_i=-n_{-i}$, 
and $\hfk(K,j)=0$ unless $j=n_i$ for some $i$, in which case
$\hfk(K,j)\cong \Z$.
\end{pro}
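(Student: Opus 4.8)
The plan is to reduce the statement to a computation of $\hfk(K)$ from large integer surgeries and then to extract the rigid ``staircase'' structure that the $L$-space hypothesis forces. First I would reduce to the case of large positive integral surgeries. The $L$-space property is insensitive to orientation reversal, and passing to the mirror $\overline K$ satisfies $S^3_{-r}(\overline K)=-S^3_r(K)$ and reflects the Alexander grading, $\hfk(\overline K,j)\cong\hfk(K,-j)$; since the conclusion is symmetric under $j\mapsto -j$, proving it for $\overline K$ proves it for $K$, and so there is no loss in assuming $r>0$. The standard monotonicity of $L$-space surgeries then guarantees that if $S^3_r(K)$ is an $L$-space for one $r>0$, then $S^3_{N}(K)$ is an $L$-space for every integer $N\ge 2g(K)-1$.

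Next I would invoke the large surgery formula. Writing $\cfkinfty(S^3,K)$ in the $(i,j)$-plane and setting $\hat A_s=C\{\max(i,\,j-s)=0\}$, the formula identifies $\hfhat(S^3_{N}(K),[s])\cong H_*(\hat A_s)$ for all $N\ge 2g(K)-1$ and each $s\in\Z$. Because $S^3_{N}(K)$ is an $L$-space, its total $\hfhat$ has rank $N=|H_1|$, so each of the $N$ $\Spinc$-structures contributes a summand of rank one; hence $H_*(\hat A_s)\cong\Z$ for every $s$. For $|s|$ large $\hat A_s$ is already quasi-isomorphic to $\cfhat(S^3)$ (vertically, resp.\ horizontally), where rank one is automatic, so the real content is the collapse $H_*(\hat A_s)\cong\Z$ at every intermediate $s$.

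The heart of the argument is to convert these rank-one statements into the asserted structure of $\hfk$, and this is the step I expect to be the main obstacle. Here I would pass to a reduced model of $\cfk(K)$ in which the grading-preserving part of the differential vanishes, so that the generators form a basis of $\bigoplus_j\hfk(K,j)$, and then analyze the vertical differential (strictly dropping $j$) and the horizontal differential (strictly dropping $i$) simultaneously. The vertical complex $C\{i=0\}$ computes $\hfhat(S^3)\cong\Z$, and by the conjugation symmetry $\hfk(K,j)\cong\hfk(K,-j)$ so does the horizontal complex; the demand that \emph{every} $\hat A_s$ also have rank-one homology forces the two differentials to organize all generators into a single connected staircase, with no disjoint staircase pieces and no acyclic ``box'' summands, since either would make some $H_*(\hat A_s)$ have rank greater than one. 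In such a staircase the generators occupy pairwise distinct Alexander gradings, each contributing a free summand $\Z$ with no torsion, their number is odd, and the conjugation symmetry makes the set of occupied gradings symmetric about $0$. Listing them as $n_{-k}<\dots<n_k$ with $n_i=-n_{-i}$ then yields exactly the conclusion: $\hfk(K,j)=0$ unless $j=n_i$, in which case $\hfk(K,j)\cong\Z$. The delicate points, which carry the rigidity, are precisely the exclusion of disconnected staircases and boxes and the ruling out of torsion, all obtained by tracking ranks against the constraints $H_*(\hat A_s)\cong\Z$ across all $s$.
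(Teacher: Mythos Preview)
The paper does not prove this proposition at all; it simply quotes it as Theorem~1.2 of Ozsv\'ath--Szab\'o \cite{OSzLensSpace} and uses it as a black box. So there is no proof in the paper to compare your attempt against.

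That said, your plan is a reasonable sketch of the Ozsv\'ath--Szab\'o argument, with one caveat. The reduction to large positive integer surgery and the identification $\hfhat(S^3_N(K),[s])\cong H_*(\hat A_s)$ are exactly as in the original. Where you diverge slightly is in the extraction step: Ozsv\'ath--Szab\'o do not argue by passing to a reduced model and ruling out boxes or disconnected staircases. Instead they compare $H_*(\hat A_s)$ for consecutive values of $s$ via short exact sequences of subquotients of $C\{i=0\}$, obtaining maps whose kernels and cokernels are controlled by $\hfk(K,s)$; the rank-one constraint at every $s$ then forces each $\hfk(K,s)$ to be $0$ or $\Z$ directly. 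Your staircase language is a now-standard repackaging of the same phenomenon, but note that the ``reduced model with vanishing grading-preserving differential'' is straightforward over a field and genuinely delicate over $\Z$, precisely because of the torsion you mention in your last sentence. You name this as a delicate point but give no mechanism for it; if you want an actual proof rather than a citation, you should either work over a field first and then upgrade, or reproduce the exact-sequence argument, which handles $\Z$-coefficients cleanly.
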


An immediate corollary [\cite{Wu} Corollary 3.8] is a simplified expression for the Alexander polynomials of such knots.

\begin{cor}\label{corollary-Alexander_poly}
Let $K$ be a knot that admits an $L$-space surgery. Then the Alexander polynomial of $K$ has the form
$$\Delta_K(T) = (-1)^k+ \sum_{j=1}^k(-1)^{k-j} (T^{n_j}+T^{-n_j}),$$
for some increasing sequence of positive integers $0<n_1<n_2<...<n_k$.
\end{cor}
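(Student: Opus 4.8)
The plan is to compute the Alexander polynomial as the graded Euler characteristic of knot Floer homology and then feed in the structural information supplied by Proposition~\ref{theorem-HFK-for-L-space-surgery}. Recall that Ozsv\'ath and Szab\'o proved
$$\Delta_K(T) \doteq \sum_{j} \chi\bigl(\hfk(K,j)\bigr)\, T^{j},$$
where the Euler characteristic of each group is taken with respect to the Maslov grading and $\doteq$ denotes equality up to an overall normalization, which we pin down using $\Delta_K(1)=1$ together with the symmetry $\Delta_K(T)=\Delta_K(T^{-1})$. Thus the first step is simply to record that, by the hypothesis that $K$ admits an $L$-space surgery, each nonvanishing summand contributes $\pm T^{n_i}$, the sign being $(-1)^{M_i}$, where $M_i$ is the Maslov grading in which $\hfk(K,n_i)\cong\Z$ is supported.

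Next I would pin down the parities of these Maslov gradings. Theorem 1.2 of \cite{OSzLensSpace}, which is the source of Proposition~\ref{theorem-HFK-for-L-space-surgery}, in fact comes with a computation of the gradings $M_i$ via a recursion on the exponents $n_i$; the crucial feature is that passing from $M_{i+1}$ to $M_{i}$ always changes the grading by an \emph{odd} integer. Taking $M_k=0$ for the top Alexander grading $n_k$ as the base case, this forces $M_i \equiv k-i \pmod 2$, and hence $\chi\bigl(\hfk(K,n_i)\bigr) = (-1)^{k-i}$, which is precisely the claimed alternating sign pattern.

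Finally I would assemble the polynomial using the symmetry of knot Floer homology. Since $n_{-i}=-n_i$ and $\hfk_d(K,s)\cong\hfk_{d-2s}(K,-s)$, the generator at Alexander grading $-n_i$ sits in Maslov grading $M_i-2n_i$, so it carries the same Euler characteristic $(-1)^{M_i}=(-1)^{k-i}$. Pairing the terms at $\pm n_i$ and relabeling the positive exponents as $0<n_1<\cdots<n_k$ then yields
$$\Delta_K(T) = (-1)^k + \sum_{j=1}^{k}(-1)^{k-j}\bigl(T^{n_j}+T^{-n_j}\bigr),$$
as desired.

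The main obstacle is the sign/parity determination in the middle step: the bare statement of Proposition~\ref{theorem-HFK-for-L-space-surgery} as printed above gives only the ranks and the locations $n_i$ of the nonzero groups, not their Maslov gradings. To keep the argument self-contained one must either invoke the grading refinement that accompanies Theorem 1.2 in \cite{OSzLensSpace}, or argue the alternation directly from the spectral sequence $\bigoplus_j \hfk(K,j)\Rightarrow \hfhat(S^3)=\Z$: since the total homology has rank one while the $2k+1$ generators each have rank one, the differentials must cancel them in consecutive pairs, and because each differential lowers the Maslov grading by an odd amount this interleaves the gradings with exactly the required parities. Either route needs slightly more than the literal content of the Proposition, so the care lies in extracting precisely the grading information that controls the signs.
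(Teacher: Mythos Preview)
Your argument is correct, but there is essentially nothing to compare it against: the paper does not give a proof of this corollary. It is introduced as ``An immediate corollary [\cite{Wu} Corollary 3.8]'' and is simply lifted from Zhongtao Wu's paper, with no further justification supplied here. So what you have written is not a variant of the paper's proof but a proof the paper itself omits.

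Substantively, your outline is the right one and is the standard route: use $\Delta_K(T)=\sum_j \chi(\hfk(K,j))T^j$, invoke Proposition~\ref{theorem-HFK-for-L-space-surgery} to locate the nonzero groups at the symmetric exponents $n_{-k}<\cdots<n_k$ with rank one each, and then determine the signs from the Maslov parities. You are also right to flag that the bare statement of Proposition~\ref{theorem-HFK-for-L-space-surgery} as reproduced in this paper does not record the gradings, so one must reach back to \cite{OSzLensSpace} for the recursion on the $\delta_i$ (which indeed forces $\delta_i\equiv k-i\pmod 2$), or alternatively run the spectral-sequence cancellation argument you describe. Either justification is fine; one small sharpening of the second option is that the differentials on $\cfhat$ drop Maslov grading by exactly $1$, not merely an odd amount, which makes the pairing-off of the $2k+1$ generators into adjacent-parity cancellations completely transparent.
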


%

\bigskip

The following proposition, a variant of a result by Zhongtao Wu and Yi Ni, is  one of the main ingredients for the proof of Theorem \ref{my-theorem-A}. More precisely it implies that the cosmetic surgery cannot be Seifert fibred.
\begin{pro}[Yi Ni and Zhongtao Wu, \cite{Ni-Wu}] \label{Ni-Wu-result}
Let $p,q >0$ be two coprime integers. If there is an orientation preserving homeomorphism  between $S^3_K(p/q)$ and $S^3_K(-p/q)$ then
$$\sum_{ \s \in \spinc (S^3_K(p/q))}  \chi(\hfred (S^3_K(p/q),\s))=0.$$ 
\end{pro}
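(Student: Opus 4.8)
The plan is to compare the Heegaard Floer correction terms (the $d$-invariants) of the two oriented manifolds $S^3_K(p/q)$ and $S^3_K(-p/q)$, and to extract from their equality the claimed vanishing of the signed count of $\chi(\hfred)$. The starting point is the surgery formula of Ozsv\'ath and Szab\'o expressing $\hf^+$ of a rational surgery on $K$ in terms of the knot Floer complex $\cfkinfty(S^3,K)$, together with the orientation-reversal relation $S^3_K(-p/q)\cong -S^3_K(p/q)$, which identifies the two manifolds as reverses of one another. First I would recall that an orientation-preserving homeomorphism $S^3_K(p/q)\cong S^3_K(-p/q)$ induces a $\Spinc$-preserving (up to conjugation/relabeling) isomorphism on $\hf^+$ that respects the absolute $\Q$-gradings, and hence forces an equality of the full collections of correction terms $\{d(S^3_K(p/q),\s)\}$ and $\{d(S^3_K(-p/q),\s)\}$, summed appropriately over $\spinc$ structures.

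The key computational identity I would invoke is the relation between the sum of correction terms and the Casson--Walker invariant, or equivalently the relation expressing the graded Euler characteristic of $\hfred$ in terms of $d$-invariants. Concretely, for a rational homology sphere $Y$ one has an identity of the schematic form
\begin{equation}
\sum_{\s\in\spinc(Y)}\chi(\hfred(Y,\s)) = \tfrac{1}{2}\bigl(\text{correction-term contribution}\bigr) - \tfrac{1}{2}\,\lambda(Y), \nonumber
\end{equation}
where $\lambda$ denotes the Casson--Walker invariant, appropriately normalized; the precise version I would use is the one appearing in the work of Ozsv\'ath--Szab\'o and Rustamov relating $\chi(\hfred)$, the $d$-invariants, and Casson--Walker. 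The next step is to note that orientation reversal negates the Casson--Walker invariant, $\lambda(-Y)=-\lambda(Y)$, while it negates each correction term, $d(-Y,\s)=-d(Y,\s)$, and sends $\chi(\hfred(-Y,\s))$ to $\chi(\hfred(Y,\bar\s))$ (since reversing orientation of $Y$ dualizes the reduced homology without altering its Euler characteristic up to sign, which I would track carefully). Combining the homeomorphism-induced equality with these symmetry properties, the symmetric contributions cancel pairwise and one is left with a single sum equal to its own negative, yielding that $\sum_{\s}\chi(\hfred(S^3_K(p/q),\s))=0$.

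The main obstacle I anticipate is pinning down the exact normalization and sign conventions in the identity linking $\sum_\s\chi(\hfred)$ to the $d$-invariants and the Casson--Walker invariant, and verifying that the $\Spinc$ relabeling induced by the orientation-reversing identification $S^3_K(-p/q)\cong -S^3_K(p/q)$ matches the conjugation symmetry of $\hfred$ so that the Euler-characteristic sum over all $\Spinc$ structures is genuinely preserved. A secondary subtlety is that an orientation-\emph{preserving} homeomorphism between the two manifolds gives an isomorphism on $\hf^+$ \emph{with a fixed grading shift}, so I must confirm that the total count $\sum_\s\chi(\hfred)$ is a homeomorphism invariant insensitive to the permutation of $\Spinc$ structures; this reduces to the fact that $\hfred$ is finitely generated and its total signed rank is orientation-sensitive in a controlled way. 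Once these bookkeeping points are settled, the cancellation argument is essentially forced, and the conclusion follows directly from the equality of the two oriented manifolds together with the antisymmetry of both sides of the governing identity under orientation reversal.
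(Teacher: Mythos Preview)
The paper does not supply its own proof of this proposition; it is quoted from \cite{Ni-Wu} and used as a black box, so there is no in-paper argument to compare against. That said, your proposal contains a genuine error that makes the sketch unworkable.

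The central problem is your assertion of ``the orientation-reversal relation $S^3_K(-p/q)\cong -S^3_K(p/q)$.'' This is false for a general knot $K$; it holds only when $K$ is amphicheiral, since $-S^3_K(p/q)\cong S^3_{m(K)}(-p/q)$ for the mirror $m(K)$. A concrete counterexample: for $K$ the right-handed trefoil, $S^3_K(+1)$ is the Poincar\'e sphere $\Sigma(2,3,5)$ while $S^3_K(-1)$ is $\Sigma(2,3,7)$; these are not even homeomorphic, let alone orientation-reverses of one another. Your entire cancellation mechanism rests on this identification: you want orientation reversal to flip the sign of $\chi(\hfred)$, combine that with the hypothesis to get the sum equal to its own negative, and conclude. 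Without the identification there is no a priori relation between $\hfred(S^3_K(-p/q))$ and $\hfred(-S^3_K(p/q))$, and the argument collapses. The ``bookkeeping'' obstacles you flag are not the issue; the underlying topological input is simply wrong.

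The proof in \cite{Ni-Wu} proceeds instead through the Ozsv\'ath--Szab\'o rational surgery mapping cone formula. One expresses $\hf^+(S^3_K(\pm p/q))$ in terms of the complexes $A_s^+$ and $B_s^+$ built from $\cfkinfty(S^3,K)$, and tracks Euler characteristics through the mapping cone to obtain explicit formulae for $\sum_\s\chi(\hfred(S^3_K(\pm p/q),\s))$ in terms of knot-level data. From these one reads off that the sums for slopes $p/q$ and $-p/q$ are negatives of each other; the hypothesis that the two surgered manifolds are orientation-preserving homeomorphic then forces both to vanish. The comparison between the two surgeries thus happens at the level of the knot Floer complex, not via any global orientation symmetry of the surgered manifolds, and that is what your sketch is missing.
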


The next proposition due to P. Ozsv\'ath and Z. Szab\'o will be essential for excluding the possibility of a rational homology 3-sphere Seifert fibred cosmetic surgery.
\begin{pro}[P. Ozsv\'ath and Z. Szab\'o, \cite{Plumbed}] \label{HF-of-seifert-fibred}
Let $Y$ be a rational homology 3-sphere Seifert fibred space. Then for one of the orientations of $Y$, $\hfred(Y)$ is supported in even degree.
\end{pro}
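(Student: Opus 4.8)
The final statement to prove is Proposition~\ref{HF-of-seifert-fibred}: for a rational homology sphere Seifert fibred space $Y$, one of the orientations has $\hfred(Y)$ supported entirely in even degree. Since this is a cited result of Ozsv\'ath and Szab\'o, let me sketch how I would reconstruct its proof.

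\textbf{Overall strategy.} The plan is to reduce to the computation of $\hfp(Y)$ for Seifert fibred spaces via plumbing diagrams, and then to exploit the grading structure of the relevant combinatorial complex. A rational homology sphere Seifert fibred space over $S^2$ (the base being a sphere for a $\Q$HS$^3$) is, up to orientation, the boundary of a plumbing $4$-manifold $X(\Gamma)$ associated to a weighted tree $\Gamma$ that is \emph{negative-definite}. The key point is that by reversing orientation if necessary, I may assume the plumbing graph has at most one ``bad" vertex (a vertex whose weight exceeds the negative of the number of its neighbors). For such graphs, Ozsv\'ath and Szab\'o give an explicit algorithm computing $\hfp(-Y)$ purely in terms of the combinatorics of characteristic vectors on the intersection lattice $H^2(X(\Gamma);\Z)$.

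\textbf{Key steps in order.} First I would recall the combinatorial model: $\hfp(-Y)$ is identified with a quotient $\mathbb{H}^+(\Gamma)$ of the set of finitely-supported functions on characteristic covectors, where the relevant maps $U$ and the grading are defined explicitly in terms of the values $\langle K, v\rangle$ and the self-intersections $v\cdot v$. Second, I would isolate the grading formula: each characteristic covector $K$ carries a $\Q$-grading determined by $(K^2 + |\Gamma|)/4$, and the crucial arithmetic observation is that for the plumbing lattices arising here these gradings all lie in a single residue class mod $2$ within each $\Spinc$ structure. Third, I would show that $\hfred(-Y)$, being the cokernel/reduced part of this combinatorial $\hfp$, inherits support in even degrees because the algorithm builds everything from these characteristic vectors whose gradings differ by the even quantity coming from $v\cdot v$ changes. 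The parity control comes from the fact that changing $K$ by $2\,\mathrm{PD}[v]$ shifts the grading by an even integer, so within the image of the surgery/plumbing maps the odd-graded part vanishes.

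\textbf{The main obstacle.} The hard part will be handling the single bad vertex and verifying that the parity statement is genuinely achievable for \emph{one} orientation. For graphs with at most one bad vertex the computation of Ozsv\'ath--Szab\'o applies directly to $-Y$; the content is that one must check that after possibly reversing orientation, $Y$ (or $-Y$) bounds such a good negative-definite plumbing, which relies on the classification of Seifert invariants and the realizability of the star-shaped plumbing graph with controlled badness. I expect the genuine technical labor to be in the grading bookkeeping: confirming that the even-degree support is not spoiled by the reduction modulo the image of $U$, and that the ``one orientation" clause is exactly what is needed to arrange at most one bad vertex. Once the plumbing calculus is in place, the parity of the grading is forced by the lattice-theoretic structure, and the conclusion that $\hfred(Y)$ (for the correct orientation) is supported in even degree follows.

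\begin{rem}
I am assuming the reader grants the Ozsv\'ath--Szab\'o plumbing calculus of \cite{Plumbed} as a black box; the proof I sketch is essentially a specialization of that calculus to the Seifert fibred case, where the base orbifold is $S^2$ and the associated graph is star-shaped.
\end{rem}
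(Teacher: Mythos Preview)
The paper itself offers no proof of this proposition: it is stated with attribution to Ozsv\'ath--Szab\'o \cite{Plumbed} and immediately used as a black box in the proof of Corollary~\ref{my-corollary-1-chap3}. There is therefore nothing in the paper to compare your argument against.

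That said, your sketch is a reasonable outline of the actual argument in \cite{Plumbed}. The essential points --- that (up to orientation) a Seifert fibred rational homology sphere bounds a negative-definite star-shaped plumbing with at most one bad vertex, that the combinatorial model $\mathbb{H}^+(\Gamma)$ computes $\hfp(-Y)$ in this regime, and that the gradings of characteristic covectors within a fixed $\Spinc$ structure all agree modulo $2$ because $K\mapsto K+2\,\mathrm{PD}[v]$ shifts the grading by the even integer $\langle K,v\rangle + v\cdot v$ (as $K$ is characteristic) --- are all correct and are indeed the mechanism behind the vanishing of $\hfp_{\mathrm{odd}}(-Y(\Gamma))$ in \cite{Plumbed}. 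One minor remark: the full Ozsv\'ath--Szab\'o argument establishes the isomorphism $\hfp(-Y)\cong\mathbb{H}^+(\Gamma)$ by an induction on the number of bad vertices via the surgery exact triangle, so your parity observation alone does not quite finish the job without that identification; but you explicitly grant the plumbing calculus as a black box, which is appropriate here.
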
 

\begin{cor}\label{my-corollary-1-chap3}
There are no truly cosmetic surgeries on a non-trivial knot in $S^3$ which yields a rational homology sphere Seifert fibred space.
\end{cor}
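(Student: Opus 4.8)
The plan is to argue by contradiction, combining the three Floer-theoretic inputs recorded above. Suppose $K\subset S^3$ is nontrivial, $r\neq s$ are slopes admitting an orientation-preserving homeomorphism $S^3_K(r)\cong S^3_K(s)=:Y$, and $Y$ is a rational homology sphere Seifert fibred space. First I would invoke Theorem~\ref{Ni-Wu-theorem} to conclude $r=-s$; since $r\neq s$ this forces $r\neq 0,\infty$, so after relabelling I may assume $r=p/q$ with $p,q>0$ coprime and $s=-p/q$. Because $|H_1(Y;\Z)|=p\geq 1$, the manifold $Y=S^3_K(p/q)$ is indeed a rational homology sphere, and we have an orientation-preserving homeomorphism $S^3_K(p/q)\cong S^3_K(-p/q)$, which is exactly the input required by Proposition~\ref{Ni-Wu-result}.

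Next I would feed this into Proposition~\ref{Ni-Wu-result}, which yields
$$\sum_{\s\in\spinc(Y)}\chi\bigl(\hfred(Y,\s)\bigr)=0.$$
On the other hand, $Y$ is a rational homology sphere Seifert fibred space, so Proposition~\ref{HF-of-seifert-fibred} says that for one of its two orientations $\hfred$ is supported entirely in even degree. For that orientation every group $\hfred(\cdot,\s)$ contributes $\chi=\rank$, so the total Euler characteristic equals $\pm\rank\hfred(Y)$, the sign depending only on whether the good orientation is $Y$ or $-Y$; here one uses that $\sum_{\s}\chi(\hfred(\cdot,\s))$ changes sign under orientation reversal while the total rank is preserved. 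Comparing with the vanishing above forces $\rank\hfred(Y)=0$, that is $\hfred(Y)=0$, so $Y$ is an $L$-space.

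Finally, $Y=S^3_K(p/q)$ being an $L$-space with $p/q>0$ makes $K$ a nontrivial $L$-space knot; by the standard theory of such knots (consistent with the structure recorded in Proposition~\ref{theorem-HFK-for-L-space-surgery} and Corollary~\ref{corollary-Alexander_poly}) one has $\tau(K)=g(K)\geq 1>0$. This contradicts part~(3) of Theorem~\ref{Ni-Wu-theorem}, which requires $\tau(K)=0$, and completes the proof. The main obstacle I anticipate is the middle step: correctly pinning down the behaviour of $\sum_{\s}\chi(\hfred(Y,\s))$ under orientation reversal together with the parity of the $\hfred$ grading, so that the single-orientation evenness supplied by Proposition~\ref{HF-of-seifert-fibred} genuinely upgrades the vanishing of the Euler characteristic to the vanishing of $\hfred(Y)$. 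Everything else is either bookkeeping or a direct appeal to the quoted results.
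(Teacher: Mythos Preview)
Your proof is correct and follows the paper's argument verbatim through the key reduction: invoke Ni--Wu to get $r=-s$, apply Proposition~\ref{Ni-Wu-result} to kill the total Euler characteristic of $\hfred$, then use the even-degree support from Proposition~\ref{HF-of-seifert-fibred} to upgrade this to $\hfred(Y)=0$, so $Y$ is an $L$-space and $K$ an $L$-space knot. The only divergence is in how you extract the contradiction at the end. The paper instead uses Corollary~\ref{corollary-Alexander_poly} to write down the Alexander polynomial explicitly, computes $\Delta_K''(1)=2\sum_{j=1}^k(-1)^{k-j}n_j^2$, observes this is nonzero because the $n_j$ are strictly increasing, and then appeals to the Boyer--Lines obstruction (Proposition~\ref{my-proposition-1}). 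Your route via $\tau(K)=g(K)>0$ contradicting part~(3) of Theorem~\ref{Ni-Wu-theorem} is shorter and avoids the explicit polynomial computation, at the cost of importing the identity $\tau=g$ for $L$-space knots, which is standard (it is in the same Ozsv\'ath--Szab\'o paper \cite{OSzLensSpace} underlying Proposition~\ref{theorem-HFK-for-L-space-surgery}) but not stated in the present paper. The paper's version has the mild advantage of being entirely self-contained relative to the results it quotes.
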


\begin{proof}
Let $K$ be a non-trivial knot in $S^3$. Let us suppose that there is  an orientation preserving homeomorphism between $S^3_{K}(r)$ and $S^3_{K}(-r)$, by Proposition \ref{Ni-Wu-result} 
 
$$ \sum_{\s\in \Spinc(S^3_{K}(r))}\chi(HF_{\mathrm{red}}(S^3_{K}(r),\s))= 0 .$$
On the other hand by Proposition \ref{HF-of-seifert-fibred}, we can assume $\hfred(S^3_{K}(r))$ is supported in even degree so
$$ \sum_{\s\in \Spinc(S^3_{K}(r))}\chi(HF_{\mathrm{red}}(S^3_{K}(r),\s))= \pm \mathrm{rank}\:HF_{\mathrm{red}}(S^3_{K}(r)).$$
Therefore we must have  $HF_{\mathrm{red}}(S^3_{K}(r)) =0 $ in which case $S^3_{K}(r)$ is an $L$-space. In particular the knot $K$ admit an $L$-space surgery.  Then $K$ admits an integral $L$-space  surgery and by Corollary \ref{corollary-Alexander_poly} the knot Floer homology satisfies:  there is an increasing sequence of integers $n_{-k}<...<n_k$
with the property that $n_i=-n_{-i}$,  and $\hfk(K,j)=0$ unless $j=n_i$ for some $i$, in which case
$\hfk(K,j)\cong \Z$. This implies that the Alexander polynomial of $K$ has the form
$$\Delta_K(T) = (-1)^k+ \sum_{j=1}^k(-1)^{k-j} (T^{n_j}+T^{-n_j}),$$
for some increasing sequence of positive integers $0<n_1<n_2<...<n_k$.

If $\Delta_K(T)=1$, then $\hfk(K,0)=\Z$, and $\hfk(K,j)=0$ for any other $j$.  Hence $g(K)=0$  and $K$ is the unknot, 
which we have excluded.

Thus $\Delta_K(T)\neq 1$ and by a straightforward computation 
$$\Delta_K''(1)=2\sum_{j=1}^{k} (-1)^{k-j} n_j^2.$$  
Then the fact that $0<n_1<n_2<...<n_k$ implies  $\Delta_K''(1)\neq 0$. Using Proposition \ref{my-proposition-1}, $K$ does not admit a truly cosmetic surgery.  

\end{proof}

\subsection{Proof of Theorem \ref{my-theorem-A}}\label{section:Proof of Theorem A}

\begin{lem}\label{my-lemma-1-chap3}
Let $K$ be a hyperbolic knot in $S^3$,  and $r,r'\in \Q \cup\left\{\infty \right\}$  two distinct exceptional slopes on $\partial \mathcal{N}(K)$. If $S_K (r)$ is homeomorphic to $S_K(r')$ as oriented manifolds, then $r$ and $r'$ are in the following table
\begin{table}[h!] \label{table-of-lemma-1-chap3}
\begin{center}
\begin{tabular}{|c|c|c|c|c|c|}
\hline 
\rule[-1ex]{0pt}{2.5ex} $r$ & $2$ & $1$ & $1/2$ & $1/3$ &  $1/4$\\ 
\hline 
\rule[-1ex]{0pt}{2.5ex} $r'$ & $-2$ &  $-1$ & $-1/2$ & $-1/3$ &  $-1/4$\\ 
\hline 
\end{tabular} 
\end{center}
\end{table}
\end{lem}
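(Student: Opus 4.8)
The plan is to feed the two distinct exceptional slopes into the two external constraints already available in the excerpt: the arithmetic restrictions of Ni and Wu (Theorem~\ref{Ni-Wu-theorem}) and the metric restriction of Lackenby and Meyerhoff (Theorem~\ref{Lackenby-Meyerhoff}). Since $K$ is hyperbolic it is in particular nontrivial, and the homeomorphism $S_K(r)\cong S_K(r')$ is one of oriented manifolds, so Theorem~\ref{Ni-Wu-theorem} applies verbatim. Writing $r=p/q$ with $\gcd(p,q)=1$ and, using $S^3_K(p/q)=S^3_K(-p/-q)$, normalizing so that $p\geq 0$, the theorem gives me immediately that $r'=-r$ and that $q^2\equiv -1\pmod{p}$ (this congruence is well defined and sign-invariant, so the normalization costs nothing). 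Because the surgery yields a rational homology sphere, $p\neq 0$, hence $p\geq 1$.

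Next I would extract a bound on $p$ and $q$ from the geometry. Both $r$ and $r'=-r$ are exceptional slopes on the one-cusped hyperbolic manifold $\mathcal{N}(K)$, so Theorem~\ref{Lackenby-Meyerhoff} gives $\Delta(r,r')\leq 8$. Using the canonical meridian--longitude basis $\{\mu,\lambda\}$ of $H_1(\partial\mathcal{N}(K))$ I represent $r=p\mu+q\lambda$ and $r'=-p\mu+q\lambda$, and the intersection-number formula $\Delta(\alpha,\beta)=|pq'-qp'|$ yields $\Delta(r,r')=|p\cdot q-q\cdot(-p)|=2|pq|$. Hence $2|pq|\leq 8$, that is $|pq|\leq 4$. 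I also note that the slope $\infty$ (the case $q=0$) is excluded at once, since $-\infty=\infty$ would force $r=r'$ against distinctness; thus $|q|\geq 1$, and in particular $1\leq p\leq 4$.

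Finally I would run the enumeration over $p\in\{1,2,3,4\}$, combining $|pq|\leq 4$ with $q^2\equiv -1\pmod{p}$. For $p=1$ the congruence is vacuous and $|q|\leq 4$ gives the slopes $1,1/2,1/3,1/4$; for $p=2$ the congruence forces $q$ odd while $|q|\leq 2$ forces $|q|=1$, giving the slope $2$; and the cases $p=3$ and $p=4$ are eliminated because $-1$ is not a quadratic residue modulo $3$ or modulo $4$, the squares being only $0,1$ in both cases. Reading off the positive representative of each pair $\{r,-r\}$ reproduces exactly the stated table. The step requiring genuine care is this last one: the congruence obstruction for $p=3,4$ is essential and cannot be replaced by the distance bound alone, since $|pq|\leq 4$ by itself still admits $p/q=3$ and $p/q=4$; and one must keep the sign normalization of $(p,q)$ consistent so that the pairs $\{r,r'\}$ are not double counted. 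That elementary number-theoretic sieving is the crux, everything else being a direct invocation of the two cited theorems.
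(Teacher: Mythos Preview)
Your proof is correct and follows the same skeleton as the paper's: invoke Ni--Wu to get $r'=-r$, apply Lackenby--Meyerhoff to bound $\Delta(r,-r)=2p|q|\leq 8$, and then eliminate $p\in\{3,4\}$ arithmetically. The one substantive difference is in that last step. You dispose of $p=3,4$ by directly citing condition~(2) of Theorem~\ref{Ni-Wu-theorem} and observing that $-1$ is not a square modulo $3$ or $4$. The paper instead rederives this obstruction from first principles via the linking pairing: an orientation-preserving homeomorphism $M(p/q)\to M(p/q')$ forces $q\equiv q'u^2\pmod{p}$ for some unit $u\in(\Z/p\Z)^*$, and since every unit squares to $1$ modulo $3$ or $4$ this gives $q\equiv q'=-q\pmod{p}$, a contradiction. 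Your route is the more economical one given that Ni--Wu's congruence is already quoted in the paper; the paper's argument is self-contained at that point and makes visible the mechanism (the linking form) underlying Ni--Wu's condition~(2), but the content is the same. One small wording issue: your line ``the surgery yields a rational homology sphere, hence $p\neq 0$'' is not quite the right justification, since you have not yet established $p\neq 0$ and that is what makes it a rational homology sphere; the clean reason is simply that $r\neq r'=-r$ forces $r\neq 0$.
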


\bigskip

\begin{proof}

Write $r=p/q$ and $r'=p/q'$. By Yi Ni and Zhongtao Wu $r=-r'$ so $q=-q'$, then $\Delta(r,r')=p|q-q'|=2 p|q| \leq 8$  i.e $p|q|\leq 4$.  Therefore $p\in \left\{1,2,3,4\right\}$.  If $p=1$ then $|q| \leq 4$ and we have the case $r\in \left\{1,1/2,1/3,1/4\right\}$. If $p=2$ then $|q| \leq 2$, since $q$ is odd we have $|q|=1$ so $r=2$. Now we need to exclude the case $p\in \left\{3,4\right\}$.

\bigskip

An orientation preserving homeomorphism  $f:M(p/q)\to M(p/q')$ induces an isomorphism $f_*:H_1(M(p/q))\to H_1(M(p/q'))$.  Since $H_1(M(p/q))=\Z/p\Z$ is generated by the class $\left[\mu \right]_q $ of the meridian,
 $$f_*\left[\mu \right]_q =\left[\mu \right]_{q'} u, \hspace{1.5cm} \text{  for some unit } u \in \left(\Z/p\Z\right)^*.$$


Let's recall that the linking pairing of $M(p/q)$  is a non-degenerate bilinear form
$$lk_{q}: \text{Tor}( H_1(M(p/q))) \otimes \text{Tor} (H_1(M(p/q))) \to \Q/\Z,$$ 
which is defined via some intersection count.  One can check that  
$$lk_q(\left[\mu \right]_q,\left[\mu \right]_q)=-q/p.$$

 To see this, let $D$ be a meridian disk for the surgery torus such that $p\mu+q\lambda =\partial D,$ in $M(p/q)$. Since  $Y$ is a $\Z$-homology sphere $\lambda_M =\partial\Sigma$ for some surface $\Sigma$. Then  $p\mu =\partial D-q \partial \Sigma=\partial \left(D-q\Sigma\right)$  and by definition
$$lk_q(\left[\mu \right]_q,\left[\mu \right]_q)=\frac{\left(D-q\Sigma\right) \centerdot \mu}{p}\ \ \ \left[\text{mod}\ \Z\right]$$
where the dot ``$\mathord{\centerdot}$" denotes the  intersection number.
 Now $\mu$ can be pushed off of $D$ so $D \centerdot \mu =0$, and $\partial\Sigma =\lambda_M\Sigma$ so $\Sigma \centerdot \mu =1$. Therefore
$$lk_q(\left[\mu \right]_q,\left[\mu \right]_q)=-\frac{q}{p} \ \ \ \left[\text{mod}\ \Z\right].$$

The map $f$  induces an isomorphism between the linking pairing of  $M(p/q)$  and $M(p/q')$ since it preserves oriented intersection number.  Therefore

$$
\begin{aligned}
lk_q(\left[\mu \right]_q,\left[\mu \right]_q) & =lk_q(f_*\left[\mu \right]_q,f_*\left[\mu \right]_q) \ \ \ \left[\text{mod}\ \Z\right]\\
&=lk_{q'}(\left[\mu \right]_{q'}u,\left[\mu \right]_{q'}u)\ \ \ \left[\text{mod}\ \Z\right]\\
&=lk_q(\left[\mu \right]_{q'},\left[\mu \right]_{q'}) u^2\ \ \ \left[\text{mod}\ \Z\right].
\end{aligned}$$

Thus
$$-\frac{q}{p} \equiv -\frac{q'}{p} u^2 \ \ \ \left[\text{mod}\ \Z\right], \  \ \text{i.e} \ \ \ q \equiv q' u^2 \ \ \ \left[\text{mod}\ p\right].$$

We apply this congruence  to the case $p\in \left\{4,3\right\}$.  For $p=4$ (resp. $p=3$), $u\in \left\{1,3\right\}$ (resp.  $u\in \left\{1,2\right\}$). Therefore $u^2=1$ and $q\equiv q'\ \ \ \left[\text{mod }\ 4\right]$, but  $q'\in \left\{q+1,q+2\right\}$ by Lemma \ref{my-lemma-1} which is not possible. Thus $p\notin \left\{3,4\right\}$.

\end{proof}

In this lemma it is essential that int$(M)$ has a complete finite volume hyperbolic structure since the bound is on the diameter of $E(M)$. Thus the examples given in \cite{Mathieu} do not fall into these categories. 

For a hyperbolic knot $K$ in $S^3$, if we take in account the type of manifold obtained after surgery we have the following lemma.

\begin{lem} \label{my-lemma-2-chap3}
There are no truly cosmetic surgeries on hyperbolic knot in $S^3$ which yields a reducible manifold.
\end{lem}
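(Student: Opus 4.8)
The plan is to combine two inputs that are already available: the Gordon--Luecke bound on the distance between two reducible slopes (Theorem~\ref{thm:reducible surgery}) and the list of admissible exceptional cosmetic pairs from Lemma~\ref{my-lemma-1-chap3}. Suppose, for contradiction, that $K$ is a hyperbolic knot and that there is an orientation-preserving homeomorphism $f\colon S^3_K(r)\to S^3_K(r')$ with $r\neq r'$ and with $S^3_K(r)$ reducible. Since $f$ is a homeomorphism, $S^3_K(r')$ is reducible too. In particular neither filling is irreducible, hence neither is hyperbolic, so both $r$ and $r'$ are exceptional slopes on $\partial\mathcal{N}(K)$.

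First I would apply Lemma~\ref{my-lemma-1-chap3} to the exceptional pair $\{r,r'\}$: this forces $r=-r'$ and places $\{r,r'\}$ in the table, i.e. (writing $r=p/q$ in lowest terms) $\{r,r'\}$ is one of $\{\pm2\}$, $\{\pm1\}$, $\{\pm1/2\}$, $\{\pm1/3\}$, $\{\pm1/4\}$. Next I would record the distance in each case. Using the formula $\Delta(\alpha,\beta)=|pq'-qp'|$ with $r=p/q\leftrightarrow p\mu+q\lambda$ and $r'=-p/q\leftrightarrow p\mu-q\lambda$, one computes $\Delta(r,r')=2p|q|$, which takes the values $4,2,4,6,8$ respectively; in every case $\Delta(r,r')\geq 2$.

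Finally I would invoke Theorem~\ref{thm:reducible surgery}. Since $M=S^3\setminus\mathcal{N}(K)$ is a compact orientable irreducible manifold with torus boundary (hyperbolic knot exteriors are irreducible) and both $S^3_K(r)$ and $S^3_K(r')$ are reducible, that theorem gives $\Delta(r,r')\leq 1$. This contradicts $\Delta(r,r')\geq 2$, so no such truly cosmetic surgery can yield a reducible manifold.

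I do not expect a genuine obstacle here: the argument is a bookkeeping combination of two cited results. The only points that need care are verifying that reducibility transfers to the homeomorphic filling (so that \emph{both} slopes are exceptional and Lemma~\ref{my-lemma-1-chap3} legitimately applies, with the degenerate slope $\infty$ excluded because $S^3_K(\infty)=S^3$ is irreducible), and confirming the elementary computation $\Delta(p/q,-p/q)=2p|q|\geq 2$ that eliminates every entry of the table.
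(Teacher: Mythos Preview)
Your proof is correct and follows essentially the same approach as the paper: combine the Gordon--Luecke bound $\Delta(r,r')\leq 1$ for two reducible slopes (Theorem~\ref{thm:reducible surgery}) with the observation from Lemma~\ref{my-lemma-1-chap3} that any exceptional truly cosmetic pair has $\Delta(r,r')\geq 2$, yielding a contradiction. You are simply more explicit about the intermediate verifications (that reducibility transfers to the second filling, and the case-by-case distance computation).
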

\begin{proof}
If $K \subset S^3$ is a hyperbolic knot and $r,r'$ are two reducible slopes on $\nb (K)$ then $\Delta(r,r')\leq 1$ by Theorem~\ref{thm:reducible surgery}. However by Lemma \ref{my-lemma-1-chap3}, the distance between two cosmetic slopes is at least two. This is not possible.
\end{proof}

\subsubsection{Proof of Theorem~\ref{my-theorem-A}}

We can now finish the proof of Theorem \ref{my-theorem-A}. 
By Lemma \ref{my-lemma-2-chap3} and Corollary \ref{my-corollary-1-chap3} $S_K (r)$ is irreducible and non-Seifert fibred.  Therefore since the manifold is not hyperbolic it contains an essential torus. On the other hand, by Theorem \ref{denominator-2}, if $S_K (r)$ is toroidal and $r=p/q$ then $|q| \leq 2$.
Therefore using the distance table \ref{distance-table} and table of Lemma \ref{my-lemma-1-chap3}  we can deduce that: $r=2$ and $s=-2$, or $r=1$  and $s=-1$,  or $r=1/2$ and $s=-1/2$.

By Theorem \ref{non-integral:theorem 1.1}, if there is a non-integral slope on $\partial (S^3\setminus \mathcal{N}(K))$ which gives a toroidal manifold then $K$ is one of the Eudave-Mu\~noz knots $k(l , m, n, p)$ and the surgery is the corresponding half-integral surgery. Therefore there is at most one slope which can give an essential torus. Thus there is no non-integral cosmetic slope which give toroidal manifold. This  excludes the case  $r=1/2$ and $s=-1/2$.

Now we have either $r=1$ or $r=2$. If $r=2$ then $\Delta(2,-2)=4$ and Theorem \ref{toroidal:theorem 24.4} gives the complete list of all hyperbolic knots in $S^3$ with two toroidal slopes $r_1$ and $r_2$ at distance 4. Precisely, there is an integer $n$ and an homeomorphism of $S^3$ which send the triple $(K,r_1,r_2)$ to   $(L_1(n);0,4), \ \ n\neq 0,1$  or $(L_2(n);2-9n, -2-9n), \ \ n\neq 0,\pm 1$. Where $L_i(n), i=1,2$ denotes the knot obtained from the right component of the link $L_i,\  i=1,2$ (see figure below) after $1/n$ surgery on the left component

\bigskip
\hspace*{2cm}\centerline{\epsfbox{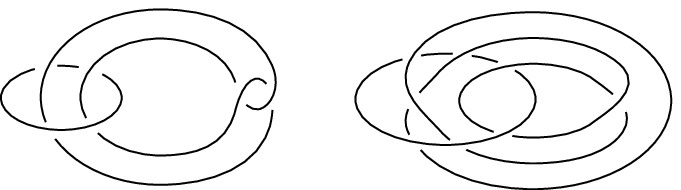}}
\bigskip
\centerline{The links $L_1$ and $L_2$ from left to right}
\bigskip

The manifold obtained after surgery is then  $S^3(L_2(n);2-9n)$ or   $S^3(L_2(n); -2-9n)$ or $S^3(L_1(n);0)$ or $S^3(L_2(n); 4)$. Therefore we can check that
$$|2-9n| = |H_1\left(S^3(L_2(n);2-9n)\right)| \neq |H_1\left(S^3(L_2(n); -2-9n)\right)| = |2+9n|$$
$$0=|H_1\left(S^3(L_1(n);0)\right)|\neq |H_1\left(S^3(L_2(n); 4)\right)| = 4.$$
Since $n\neq 0$, this completes the proof of Theorem \ref{my-theorem-A}. $\square$

\bigskip
\section{Cosmetic surgery on some special classes of knots}\label{special examples}

As  consequences of Theorem \ref{my-theorem-A}, let us give some results about cosmetic surgeries along algebraic knots, alternating knots and arborescent knots in $S^3$. 
 
 \subsection{Algebraic knots.}\label{algebraic_knot}

An \textit{algebraic knot} $K\subset S^3$ is the link of an {\em irreducible complex
plane curve singularity } that is, $K$  is the transversal intersection 
$$K=\{f=0\}\cap S^3$$
where $f:(\C^2,0)\to (\C,0)$ is an	irreducible holomorphic function, and $S^3=\{z\in \C^2:
||z||=\epsilon\}$ for $\epsilon>0$ sufficiently small. 
The natural orientations of $S^3$ and of the regular part of $\{f=0\}$
induces a natural orientation on $K$. 
 When $\{f=0\}$ is not smooth at the origin, $K$ is not the unknot. The Heegaard Floer homology of the result of a surgery on an algebraic knot has been computed by A. N\'emethi in \cite{Nemethi}.
 
\begin{pro}[A. N\'emethi  \cite{Nemethi}] \label{Nemethi-algebraic-knot}
Let $K \subset S^3$ be an algebraic knot,  $p,q >0  $ two coprime integers and $Y=-S^3_K(-p/q)$.  Then $\hfred(Y)$ is supported in even degree. 
\end{pro}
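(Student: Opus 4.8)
The plan is to use that an algebraic knot is an \emph{$L$-space knot} and then read $\hfred(Y)$ off the Ozsv\'ath--Szab\'o surgery formula, the orientation convention $Y=-S^3_K(-p/q)$ being precisely what fixes the reduced homology in even gradings. It is classical that an algebraic knot is a positive iterated torus knot and hence admits positive $L$-space surgeries. First I would use this to pin down the knot complex: by Proposition~\ref{theorem-HFK-for-L-space-surgery} each group $\hfk(K,j)$ is $0$ or $\Z$, so $\cfkinfty(S^3,K)$ is a staircase. The concrete consequence I need is that for every $s\in\Z$ the subquotient complex $A^+_s=C\{\max(i,j-s)\geq 0\}$ has homology a single tower $\mathcal{T}^+$ with no reduced summand; this is because large positive surgeries on $K$ are $L$-spaces and $\hfp(S^3_N(K),[s])\cong H_*(A^+_s)$ for $N\gg 0$.

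Next I would feed this into the rational surgery mapping cone. Writing the cone $X^+$ for the relevant coefficient out of the nodes $\mathfrak{A}^+_i=A^+_{\lfloor i/q\rfloor}$ and $\mathfrak{B}^+_i=\mathcal{T}^+$ joined by the vertical and horizontal maps $v_i,h_i$, the $L$-space knot hypothesis makes every node a single tower and every map multiplication by a power of $U$ on towers, namely $U^{V_{\lfloor i/q\rfloor}}$ and $U^{H_{\lfloor i/q\rfloor}}$. The homology of the cone therefore splits over $\Spinc(Y)$ into a computable direct sum of cyclic $\Z[U]$-modules, and $\hfred(Y)$ is exactly the contribution of the finite cokernels and kernels of these $U$-power maps. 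Monotonicity of $V_s$ and $H_s$ in $s$, which follows from the staircase, ensures these finite pieces assemble without overlap.

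The crux, and the step I expect to be the main obstacle, is the grading bookkeeping: I must show every one of these finite cyclic summands lies in an even absolute $\Q$-grading. My plan is to track the grading shifts in the cone against the two reference towers. Inside a tower gradings are constant mod $2$, the shift between an $A$-node and its neighbouring $B$-node is fixed, and the shifts between the $A$-nodes contributing to a single $\Spinc$ class change by even amounts as the index runs through its residue class mod $p$; comparing with the large-surgery case, where the manifold is an $L$-space and the statement is vacuous, then propagates a single parity to all $p/q$. It is here that the orientation enters: the reduced homology of the negative surgery $S^3_K(-p/q)$ comes out in odd gradings, and passing to $Y=-S^3_K(-p/q)$ applies the orientation-reversal duality for $\hfred$, which shifts gradings by an odd amount and hence flips the parity to even. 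The trefoil case recovers the even reduced homology of the Brieskorn sphere $\Sigma(2,3,7)$ with its singularity orientation.

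As an alternative I would observe that $Y$ is an almost-rational, negative-definite plumbed $3$-manifold---the link of the surface singularity associated to the surgery---so the conclusion also follows from N\'emethi's identification of $\hfp$ with lattice cohomology $\mathbb{H}^0$, which is supported in even degrees for almost-rational graphs; this is indeed the context of the cited computation. The surgery-formula argument is preferable here only in that it stays inside the $L$-space knot toolkit already established in Proposition~\ref{theorem-HFK-for-L-space-surgery} and Corollary~\ref{corollary-Alexander_poly}.
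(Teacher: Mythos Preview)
The paper does not prove this proposition at all: it is quoted from N\'emethi's paper \cite{Nemethi} and used as a black box. So there is no ``paper's own proof'' to compare against beyond the citation.

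Your \emph{alternative} route is in fact N\'emethi's actual argument in the cited reference: the manifold $Y=-S^3_K(-p/q)$ is the link of a surface singularity obtained from the plane curve singularity, hence a negative-definite almost-rational plumbed rational homology sphere, and for such manifolds N\'emethi identifies $\hfp$ with the lattice cohomology $\mathbb{H}^0$ (equivalently, computes it from the graded root), which is supported in even degrees. That is the content of \cite{Nemethi}, and it is the intended justification here.

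Your primary route via the rational surgery mapping cone is a genuinely different strategy and is reasonable in outline: algebraic knots are positive iterated torus knots, hence $L$-space knots, so every node $A^+_s$ in the cone is a single tower and the reduced homology is assembled from finite $U$-torsion pieces coming from the $U$-power maps. The weak point is the grading parity step, which you correctly flag as the crux but only sketch. The assertions that ``shifts between the $A$-nodes \ldots\ change by even amounts'' and that comparing with the large-surgery case ``propagates a single parity to all $p/q$'' require the explicit absolute grading formula for the mapping cone and a nontrivial verification; this can be carried out, but as written it is a plan rather than a proof. If you want to pursue this route, the cleanest way is to compute the gradings of the finite pieces directly from the formula in Ozsv\'ath--Szab\'o's rational surgeries paper and check the parity one $\spinc$ structure at a time; the orientation reversal at the end is then the easy part.
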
 

This leads to the following corollary.

\begin{customcor}{\ref{my-corollary-2-chap3}}
There are no truly cosmetic surgeries on non-trivial algebraic knot in $S^3$.
\end{customcor}

\begin{proof}
The proof is similar to the proof of Corollary \ref{my-corollary-1-chap3} since the $\hfred$ of the resulting manifold is supported in even degree by Proposition \ref{Nemethi-algebraic-knot}.
\end{proof}

\subsection{Alternating knots.}\label{alternating_knot}
Combining Theorem \ref{my-theorem-A} with work of Kazuhiro Ichihara and Hidetoshi Masai, we have  the following result for alternating knot in $S^3$.

\begin{customcor}{\ref{my-corollary-3}}
There are no exceptional truly cosmetic surgeries on an alternating hyperbolic knot in $S^3$.
\end{customcor}
\begin{proof}
 
The corollary is a consequence of the classification of exceptional surgeries on alternating knots which was done by  Kazuhiro Ichihara and Hidetoshi Masai \cite{Ichihara} combined with Theorem \ref{my-theorem-A}. By this classification if an alternating hyperbolic knot in $S^3$ admits an exceptional surgery with slope $r$, then either:
\begin{itemize}
\item[•] $K$ is a twist knot  $K[2n,\pm 2]$ for $n\neq 0$ (which includes the figure-8),
\item[•] $K$ is a two bridge knot $K_{\left[a,b\right]}$ with $|a|,|b| >2$ and $r=0$ if both $a,b$ are even and $r=2b$ if $a$ is odd and $b$ is even,
\item[•] $K$ is a pretzel knot $P(a,b,c)$  with $a,b,c \neq 0,\pm 1$ and $r=0$ if $a,b,c$ are all odd and $r=2(b+c)$ if $a$ is even and $b,c$ are odd. Moreover $S^3_K(r)$ is toroidal but not Seifert fibred.
\end{itemize}
The Alexander polynomial of a  twist knot  $K[2n,\pm 2]$ is $\Delta_K(t)=2n+1 -n (t+t^{-1})$, so \\
$\Delta_K''(1)= -2n\neq 0$. Therefore by Proposition \ref{my-proposition-1} we cannot have truly cosmetic surgery for the first case.  For the last two cases $r\neq \pm 1$, so these exceptional slopes  cannot be truly cosmetic slopes by Theorem \ref{my-theorem-A}. 
\end{proof}

\bigskip

\subsection{Arborescent knot.}\label{arborescent_knot}

Another class of knots in $S^3$ is the class of \textit{arborescent knots}.   Let's recall that a  \textit{Montesinos tangle} is a sum of several rational tangles and an \textit{arborescent tangle} is one that can be obtained by summing several Montesinos tangles together in an arbitrary way. An  \textit{arborescent knot} or \textit{link} is a knot or link obtained by joining the endpoints of the arcs in an arborescent tangle by two arcs.

\begin{customcor}{\ref{my-corollary-4}}
There are no exceptional truly cosmetic surgeries on arborescent knots in $S^3$.
\end{customcor}


\begin{proof}
In light of Theorem \ref{my-theorem-A} we only have to check that $\pm 1$ surgery on an arborescent knot do not yield two homeomorphic toroidal manifold. Following Ying-Qing Yu, there are three types of arborescent knot: type I, type II and type III. By Theorem 3.6 of \cite{Wu_Qing}, every non-trivial surgery on a type III arborescent knot gives a  hyperbolic manifold, therefore we shall focus on type II and type I arborescent knots. For type I, they are Montesinos knots with length at most 3, which again split as 2-bridge knots and Montesinos knots of length 3.  The 2-bridge knots which admit toroidal surgery are given in Theorem 1.1 of \cite{Brit-Wu} and they are among the knots in Corollary \ref{my-corollary-3}, hence they do not admit truly cosmetic surgery.	The case of Montesinos knots of length 3 is dealt with in \cite{Wu_Qing2}, precisely if $K$ is a Montesinos knot of length 3 and $\delta$ is a slope on $\partial \nb (K)$, then $S^3_K (\delta)$ is toroidal if and only if, following notation in \cite{Wu_Qing2},  $(K, \delta)$ is equivalent to one of 

\begin{itemize}

\item $K = K(1/q_1,\; 1/q_2,\; 1/q_3)$, $q_i$ odd, $|q_i|>1$,
    $\delta = 0$.

\item $K = K(1/q_1,\; 1/q_2,\; 1/q_3)$, $q_1$ even, $q_2, q_3$ odd,
 $|q_i|>1$, $\delta = 2(q_2+q_3)$.

\item $K = K(-1/2,\; 1/3,\; 1/(6+1/n))$, $n \neq 0, -1$,  $\delta = 16$
if $n$ is odd, and $0$ if $n$ is even.

\item $K = K(-1/3,\; -1/(3+1/n),\; 2/3)$, $n\neq 0, -1$, $\delta = -12$
when $n$ is odd, and $\delta=4$ when $n$ is even.

\item $K = K(-1/2,\; 1/5,\; 1/(3+1/n))$, $n$ even, and $n\neq 0$,
$\delta = 5 - 2n$.

\item $K = K(-1/2,\; 1/3,\; 1/(5+1/n))$, $n$ even, and $n\neq 0$,
$\delta = 1-2n$.

\item $K = K(-1/(2+1/n),\; 1/3,\; 1/3)$, $n$ odd, $n \neq -1$,
$\delta = 2n$.

\item $K = K(-1/2,\; 1/3,\; 1/(3+1/n))$, $n$ even, $n \neq 0$,
$\delta = 2-2n$.

\item $K = K(-1/2,\; 2/5,\; 1/9)$, $\delta = 15$.

\item $K = K(-1/2,\; 2/5,\; 1/7)$, $\delta = 12$.

\item $K = K(-1/2,\; 1/3,\; 1/7)$, $\delta = 37/2$.

\item $K = K(-2/3,\; 1/3,\; 1/4)$, $\delta = 13$.

\item $K = K(-1/3,\; 1/3,\; 1/7)$, $\delta = 1$.

\end{itemize}

 After checking this list  for knots   which are listed more than once we notice that there are at most three toroidal slopes. A knot  $K(t_1, t_2, t_3)$  admits exactly two toroidal surgeries if and only if it is equivalent to one of the following 5 knots:

$$
\begin{aligned}
& K(-1/2,\; 1/3,\; 2/11), \quad \text{$\delta = 0$ and $-3$;} \\
& K(-1/3,\; 1/3,\; 1/3), \quad \text{$\delta = 0$ and $2$;} \\
& K(-1/3,\; 1/3,\; 1/7), \quad \text{$\delta = 0$ and $1$;} \\
& K(-2/3,\; 1/3,\; 1/4), \quad \text{$\delta = 12$ and $13$;} \\
& K(-1/3,\; -2/5,\; 2/3), \quad \text{$\delta = 4$ and $6$.} 
\end{aligned}
$$

and it admits exactly three toroidal surgeries if and only if  it is the figure-8 or the knot  $ K(-1/2,\; 1/3,\; 1/7)$.  No pair of toroidal slopes corresponding to these knots admitting more that one toroidal slope are $\pm 1$ in the standard Seifert framing so this excludes the possibility of truly cosmetic surgery. 

The remaining case is that of type II arborescent knots. There are knots that have a Conway sphere cutting it into two Montesinos tangles of type $T(r_i, 1/2), \ \ i=1,2$ where $r_i \in \Q \cup \{\infty\}$. According to Theorem 1.1 of \cite{Wu_Qing3}, there are three distinct knots $K_1,K_2,K_3$ such that an arborescent knot $K\in S^3$ admits an exceptional slope $\delta$ if and only if $(K,\delta)$ is isotopic to $(K_1,3),\ (K_2,0), \ (K_3,-3) $, in which case the slope is toroidal. Therefore since there is exactly one slope for each knot, there are no truly cosmetic surgery.
\end{proof}

\bigskip

\section{Some properties of Heegaard Floer invariants.}\label{miscellaneous properties of HFH}
\subsection{The correction term}

From the absolute $\Q$-grading we can derive a new 
numerical invariant for rational homology three-spheres equipped with $\spinc$ structures.

\begin{defn}[P. Ozsv\'ath and Z. Szab\'o, \cite{Absolutely}]
Let $(Y,\s)$ be a rational homology three-sphere equipped with a $\spinc$ structure.
The  correction term $d(Y,\s)$ is the minimal $\Q$-grading of any non-torsion
element in the image of $\hfinfty(Y,\s)$ in $\hfp(Y,\s)$, i.e
$$d(Y,\s)=\min \{\liftGr\left(\pi_*(x)\right)\ \mid \ x\in \hfinfty(Y;\s) \}$$
where $\pi_*:\hfinfty(Y;\s) \to \hfp(Y;\s)$ is the map in the long exact sequence 
\begin{center}
\begin{tikzpicture}[node distance=3cm]
\node (O) {$\cdots$};
\node (A) [right of=O] {$\hfm(Y,\s)$};
\node (B) [right of=A] {$\hfinfty(Y,\s) $};
\node (C) [right of=B] {$\hfp(Y,\s) $};
\node (O1)[right of=C] {$\cdots$};
\draw[->] (O) to node {} (A);
\draw[->] (A) to node[above] {$i_*$} (B);
\draw[->] (B) to node[above] {$\pi_*$} (C);
\draw[->] (C) to node {} (O1);
\end{tikzpicture}
\end{center}
\end{defn}
There is another interpretation of  $d(Y,\s)$ using the reduced homology $\hfred(Y;\s)$. By definition of $\hfred(Y;\s)$, we have the isomorphism:
$$\hfp(Y;\s) \cong \frac{\Z[U,U^{-1}]}{U\Z[U]}\oplus\hfred(Y;\s),$$
then $d(Y,\s)$ is the $\Q$-grading of the lowest degree generator of $\Z[U,U^{-1}]/U\Z[U]$. 

For the 3-sphere, the homologies $\hfc(S^3)$ are all supported in degree zero so $d(S^3)=0$.

%

For the case of a $3$-manifold $Y_0$ with $H_1(Y_0;\Z)\cong \Z$, there is a unique $\spinc$ structure $\s_0$ suth that $c_1(\s_0)=0$. We can then define two correction terms as follow.
\begin{defn}[P. Ozsv\'ath and Z. Szab\'o, \cite{Absolutely}]
We define the correction terms $d_{+1/2}(Y_0)$, resp.~$d_{-1/2}(Y_0)$, to be  the minimal $\Q$-grading of any non-torsion
element in the image of $\hfinfty(Y_0,\s_0)$ in $\hfp(Y_0,\s_0)$ with grading {\em $+1/2$  resp. $-1/2$  modulo $2$.}
\end{defn}

\begin{pro}[P. Ozsv\'ath and Z. Szab\'o, \cite{Absolutely}]
\label{pro: Correction Term for b1=1}
Let $H_1(Y_0;\Z)\cong \Z$. Then,
\begin{eqnarray}
d_{1/2}(Y_0)-1\leq d_{-1/2}(Y_0).\\ 
d_{\pm 1/2}(Y_0,\s)=d_{\pm 1/2}(Y_0,{\overline\s}),\\ 
d_{\pm 1/2}(Y_0,\s)=-d_{\mp 1/2}(-Y_0,\s). 
\end{eqnarray}
\end{pro}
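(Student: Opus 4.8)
The plan is to derive all three relations from the structure of $\hfinfty(Y_0,\s_0)$ for the torsion $\spinc$ structure, together with the standard functorial symmetries of Heegaard Floer homology. The key input is the identification
$$\hfinfty(Y_0,\s_0)\cong \Z[U,U^{-1}]\otimes_{\Z}\Lambda^* H^1(Y_0;\Z)$$
as a module over $\Z[U,U^{-1}]\otimes\Lambda^* H^1(Y_0)$. Since $H_1(Y_0;\Z)\cong\Z$, this is free of rank two over $\Z[U,U^{-1}]$, with generators $e_0\in\Lambda^0$ and $e_1\in\Lambda^1$ whose gradings are $\equiv -1/2$ and $\equiv +1/2\pmod 2$ and differ by one. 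The generator $\gamma$ of $H_1(Y_0)$ acts by contraction, giving a degree $-1$ endomorphism with $\gamma e_1=e_0$ and $\gamma e_0=0$; in particular $\gamma$ restricts to an isomorphism from the $+1/2$-tower onto the $-1/2$-tower of $\hfinfty$, commutes with $U$, and is natural with respect to $i_*$ and $\pi_*$ in the long exact sequence relating the three flavors.

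The first inequality is the heart of the matter, and I would prove it by contradiction, supposing $d_{-1/2}(Y_0)<d_{1/2}(Y_0)-1$. Let $\eta_0=U^{-k_0}e_0$ be the element of the $-1/2$ part of $\hfinfty$ at grading $d_{-1/2}(Y_0)$, so that $\pi_*(\eta_0)$ is the bottom of the $-1/2$-tower and is nonzero. Because $\gamma$ commutes with $U$ and carries the $+1/2$-tower isomorphically onto the $-1/2$-tower, we may write $\eta_0=\gamma(U^{-k_0}e_1)$, where $U^{-k_0}e_1$ sits in the $+1/2$-tower at grading $d_{-1/2}(Y_0)+1<d_{1/2}(Y_0)$. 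By definition of $d_{1/2}$ as the minimal grading of a non-torsion class in the image of $\pi_*$ in that residue, every element of the $+1/2$-tower below grading $d_{1/2}(Y_0)$ lies in $\ker\pi_*=\Image i_*$; hence $U^{-k_0}e_1\in\Image i_*$. Applying $\gamma$ and using the identity $\gamma\circ i_*=i_*\circ\gamma$, we obtain $\eta_0=\gamma(U^{-k_0}e_1)\in\Image i_*=\ker\pi_*$, so $\pi_*(\eta_0)=0$, contradicting the choice of $\eta_0$. Therefore $d_{1/2}(Y_0)-1\le d_{-1/2}(Y_0)$. The step I expect to require the most care is justifying that $\gamma$ is genuinely induced, compatibly and $U$-equivariantly, on all three flavors and is an honest isomorphism of the two $\hfinfty$-towers; this is exactly the point where the module description above is needed, and sign/grading-shift conventions must be pinned down.

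For the remaining two relations I would invoke the standard symmetries. Conjugation invariance follows from the grading-preserving isomorphism $\hfc(Y_0,\s)\cong\hfc(Y_0,\overline{\s})$, which intertwines the $U$-action and the maps $i_*,\pi_*$; it therefore carries the image of $\hfinfty$ in $\hfp$ to itself, preserving the $\Q$-grading in each residue class modulo $2$, so $d_{\pm1/2}(Y_0,\s)=d_{\pm1/2}(Y_0,\overline{\s})$. The orientation-reversal formula follows from the duality identifying $\hfp(Y_0,\s)$ with $\hfm(-Y_0,\s)$ up to the appropriate grading shift: this duality exchanges the roles of $\hfp$ and $\hfm$ and negates the absolute $\Q$-grading, so it interchanges the residues $+1/2$ and $-1/2\pmod 2$ and sends the invariant extracted from the image of $\pi_*$ for one orientation to minus the corresponding invariant for $-Y_0$ in the opposite residue, yielding $d_{\pm1/2}(Y_0,\s)=-d_{\mp1/2}(-Y_0,\s)$. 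As a sanity check I would test $Y_0=S^1\times S^2$, where $d_{1/2}=1/2$ and $d_{-1/2}=-1/2$, which confirms all three relations and shows that the first inequality is sharp.
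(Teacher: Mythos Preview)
The paper does not actually prove this proposition; its proof reads simply ``See \cite{Absolutely}, section 4.2.'' Your argument is correct and is essentially the one found in the original Ozsv\'ath--Szab\'o reference: the first inequality comes from the $H_1(Y_0)$-action on the Floer groups, which on $\hfinfty(Y_0,\s_0)$ carries the $+1/2$-tower isomorphically onto the $-1/2$-tower with a degree shift of one and commutes with $i_*$ and $\pi_*$, while the two identities are the standard conjugation symmetry and the $\hfp/\hfm$ duality under orientation reversal.
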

\begin{proof}
See \cite{Absolutely}, section 4.2.
\end{proof}

For integral homology spheres there is only one $\spinc$ structure so there is a unique correction term. The following proposition gives a relationship between correction
terms for integral homology spheres and the correction terms for
zero-surgeries on knots they contain. 

\begin{pro}[P. Ozsv\'ath and Z. Szab\'o, \cite{Absolutely}]\label{pro:Bound on correction term}
Let $K\subset Y$ be a knot in an integral homology three-sphere and let $Y_1$ be the manifold obtained by $+1$ surgery on $K$. Then,
$$ d(Y)- \frac{1}{2} \leq d_{-1/2}(Y_0), \hspace*{0.8cm} \text{and} \hspace*{0.8cm}
 d_{+1/2}(Y_0)-\frac{1}{2}\leq d(Y_1).$$
\end{pro}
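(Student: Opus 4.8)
The plan is to derive both inequalities from the integer surgery exact sequence associated to $K$, by tracking how the cobordism maps interact with the tower that defines the correction terms. First I would introduce the two-handle cobordisms realizing the surgery triangle: the $0$-framed trace $W_0\colon Y\to Y_0$, whose generator of $H_2$ is null-homologous of square $0$, a two-handle cobordism $W_0'\colon Y_0\to Y_1$, and the $+1$-framed trace $W_1\colon Y\to Y_1$, whose generator has square $+1$. These handle attachments induce the maps in the surgery long exact sequence
\begin{equation}
\cdots \longrightarrow \hfp(Y)\longrightarrow \hfp(Y_0)\longrightarrow \hfp(Y_1)\longrightarrow \cdots, \nonumber
\end{equation}
and each such map is a sum of the cobordism maps $F^{+}_{W,\s}$ over $\spinc$ structures $\s$ on the relevant $W$. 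I would then invoke the two structural facts from \cite{Absolutely}: every $F^{+}_{W,\s}$ commutes with the $U$-action and fits into a commuting ladder with the exact triangle $\hfm\to\hfinfty\to\hfp$, and it shifts the absolute $\Q$-grading by $\frac{c_1(\s)^2-2\chi(W)-3\sigma(W)}{4}$.

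The engine of the proof is that a cobordism with $b_2^{+}=0$ induces, in the torsion $\spinc$ structure, a surjection of $\hfinfty$-towers, so that $F^{+}_{W,\s}$ carries the bottom of the source tower to a nonzero element of the target tower, sitting at the shifted grading. The map relevant to each inequality is induced by such a cobordism, and its grading shift works out to $\pm\tfrac12$ on the contributing $\spinc$ structure. Applying the map associated to $W_0$ to the generator realizing $d(Y)$ and comparing its grading against the definition of $d_{-1/2}(Y_0)$, the minimal grading of those classes in the image of $\hfinfty(Y_0,\s_0)$ whose grading is $-\tfrac12$ modulo $2$, yields $d(Y)-\tfrac12\le d_{-1/2}(Y_0)$; running the same argument with $W_0'$, starting from the generator of the $+\tfrac12$-tower of $Y_0$, produces $d_{+1/2}(Y_0)-\tfrac12\le d(Y_1)$. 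The two half-integer gradings enter because $\hfinfty(Y_0,\s_0)$ has rank two over $\Z[U,U^{-1}]$, its two towers sitting in gradings of opposite parity, and each cobordism map selects the tower matching its own grading shift.

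The main obstacle I anticipate is precisely the grading and parity bookkeeping. One must compute $\frac{c_1(\s)^2-2\chi-3\sigma}{4}$ for each cobordism and each contributing $\spinc$ structure, determine which of the two towers of $Y_0$ is selected (this is where the $+\tfrac12$ versus $-\tfrac12$ distinction lives), and check that the relevant component of the cobordism map is genuinely nonzero on the tower, so that the bound is not vacuous. The subtlety is compounded by the fact that $W_0$ carries $b_2^{0}=1$ rather than being strictly definite, so the isomorphism statement for $\hfinfty$ must be applied in the torsion $\spinc$ structure $\s_0$ and combined carefully with the exactness of the surgery sequence; this interplay, rather than any individual computation, is the delicate point.
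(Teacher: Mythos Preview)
The paper does not actually prove this proposition; its entire proof reads ``See \cite{Absolutely} section 4.2.'' Your framework --- the surgery exact triangle for $(Y,Y_0,Y_1)$, the fact that the two-handle cobordisms with $b_2^+=0$ induce isomorphisms of $\hfinfty$ in the torsion $\spinc$ structure, and the absolute grading shift formula --- is exactly the machinery Ozsv\'ath and Szab\'o deploy in that reference, so in that sense your approach matches the original source rather than the paper under review.

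There is, however, a genuine directional error in your ``engine.'' You assert that because $F^\infty_{W,\s}$ is a surjection of towers, $F^+_{W,\s}$ carries the bottom generator of the source tower to a \emph{nonzero} element of the target tower. This implication fails: one has $F^+(\pi(\xi^\infty))=\pi(F^\infty(\xi^\infty))$, and the right-hand side vanishes exactly when the grading of $F^\infty(\xi^\infty)$ lies strictly below the bottom of the target tower. If your claim were valid, applying it to $W_0$ and the generator at grading $d(Y)$ would yield $d_{-1/2}(Y_0)\le d(Y)-\tfrac12$, the reverse of the stated inequality. The argument must run the other way: use that $F^\infty_{W_0,\s_0}$ is an \emph{isomorphism onto} the appropriate tower of $\hfinfty(Y_0,\s_0)$, pull back the bottom generator $\eta$ of that tower to an element $\xi^\infty\in\hfinfty(Y)$ at grading $d_{-1/2}(Y_0)+\tfrac12$, and observe that $\pi(\xi^\infty)\in\hfp(Y)$ is nonzero because its image under $F^+_{W_0,\s_0}$ is $\eta\neq 0$. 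Since $d(Y)$ is by definition the minimal grading of such an element, $d(Y)\le d_{-1/2}(Y_0)+\tfrac12$ follows. The second inequality is obtained by the same pull-back argument applied to $W_0'$. Your closing caveat about grading and parity bookkeeping is well placed; in particular, identifying which of the two towers of $\hfinfty(Y_0,\s_0)$ receives the image of $F^\infty_{W_0,\s_0}$ uses that $d(Y)\in 2\Z$ for an integral homology sphere, and the grading shift for $W_0'$ requires care since that cobordism is genuinely negative definite rather than having $b_2^0=1$.
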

\begin{proof}
See \cite{Absolutely} section 4.2.
\end{proof}


In light of Proposition~\ref{pro:Bound on correction term}, the correction terms
for homology $S^1\times S^2$ can be used to give obstructions for
obtaining a given three-manifold as zero-surgery on a knot in the
three-sphere. Specifically, since $d(S^3)=0$, we see that if $Y_0$ is
obtained as zero-surgery on a knot in $S^3$, then $$-\frac{1}{2}\leq
d_{-1/2}(Y_0).$$ Moreover, by reflecting the knot  
we also obtain the bound
$$d_{1/2}(Y_0) \leq \frac{1}{2}.$$

\begin{pro}[P. Ozsv\'ath and Z. Szab\'o, \cite{Absolutely}]
\label{pro:Correction term for S3}
Let $K\subset S^3$ be an oriented knot in the three-sphere. Then, 
\begin{eqnarray*}
d_{1/2}(S^3_{K}(0))-\frac{1}{2} &=& d(S^3_K(1)) \\
d(S^3_{K}(-1))-\frac{1}{2} &=& d_{-1/2}(S^3_{K}(0))
\end{eqnarray*}
\end{pro}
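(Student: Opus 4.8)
\emph{Proof plan.} Write $Y_0=S^3_K(0)$, $Y_{+1}=S^3_K(1)$ and $Y_{-1}=S^3_K(-1)$, and recall $d_{1/2}=d_{+1/2}$. The plan is to prove the first identity and then get the second for free by reflecting $K$. First I would feed $K\subset S^3$ into Proposition~\ref{pro:Bound on correction term}; since $d(S^3)=0$ this gives at once
\[ d_{1/2}(Y_0)-\tfrac12\le d(Y_{+1}),\qquad -\tfrac12\le d_{-1/2}(Y_0). \]
The first inequality is exactly the ``$\le$'' half of the first asserted equality, so the whole problem reduces to the reverse bound $d(Y_{+1})\le d_{1/2}(Y_0)-\tfrac12$.

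For this reverse bound I would bring in the integer surgery exact triangle
\[ \cdots\to\hfp(S^3)\xrightarrow{F_1}\hfp(Y_0)\xrightarrow{F_2}\hfp(Y_{+1})\xrightarrow{F_3}\cdots, \]
whose arrows are sums of $2$-handle cobordism maps carrying explicit absolute grading shifts. On $\hfinfty$ the three manifolds are standard: $\hfinfty(S^3)$ and $\hfinfty(Y_{+1})$ are single towers, whereas $\hfinfty(Y_0,\s_0)$ splits as two towers in gradings differing by $1$ modulo $2$, whose bottoms detect $d_{+1/2}(Y_0)$ and $d_{-1/2}(Y_0)$. Since $\pi_*\colon\hfinfty\to\hfp$ is an isomorphism in all sufficiently high gradings for each manifold, the triangle restricts there to an exact triangle of towers; chasing it shows that, after the grading shift $-\tfrac12$, the $+1/2$-tower of $\hfp(Y_0)$ is carried isomorphically onto the tower of $\hfp(Y_{+1})$, while the other tower is matched with $\hfp(S^3)$. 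Tracking the bottom non-torsion class through this identification produces the missing inequality and completes the first equality.

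I expect the grading bookkeeping to be the hard part: one must compute the shifts of $F_1,F_2,F_3$ from the intersection forms of the cobordisms (each has $b_2^+=0$, so the top-degree behaviour is controlled by the $\spinc$ structure minimizing $-c_1^2$), confirm that the relevant component is a grading-preserving isomorphism of towers in high degree so that no slack survives, and verify that the two refinements $\pm\tfrac12\bmod 2$ line up with the two towers. This is precisely the step that upgrades the inequality to an equality. Finally, the second equality needs no new work: applying the now-proven first equality to the mirror $\overline K$ and using $S^3_{\overline K}(r)=-S^3_K(-r)$, so that $S^3_{\overline K}(0)=-Y_0$ and $S^3_{\overline K}(1)=-Y_{-1}$, it reads $d_{1/2}(-Y_0)-\tfrac12=d(-Y_{-1})$. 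Relation~(3) of Proposition~\ref{pro: Correction Term for b1=1} gives $d_{1/2}(-Y_0)=-d_{-1/2}(Y_0)$, and $d(-Y_{-1})=-d(Y_{-1})$ since $Y_{-1}$ is an integral homology sphere; substituting yields $d(Y_{-1})-\tfrac12=d_{-1/2}(Y_0)$, as desired.
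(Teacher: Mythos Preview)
Your proposal is correct and follows essentially the same route as the paper: both derive the equalities from the surgery long exact sequence together with the explicit tower structure of $\hfp(S^3)$. The paper's proof is a one-line pointer to that argument, while you have unpacked it --- splitting off one inequality via Proposition~\ref{pro:Bound on correction term} (itself a consequence of the same exact sequence), extracting the reverse inequality by tracking the towers through the triangle in high degree, and handling the second identity by mirroring --- but the underlying mechanism is identical.
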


\begin{proof}
This is a direct consequence of the 
surgery long exact sequence, in view of the structure of
$\hfp(S^3)$.
\end{proof}

 Ozsv\'ath and Z. Szab\'o also proved the following result concerning the correction term for $1/n$-surgery. 
\begin{pro}[P. Ozsv\'ath and Z. Szab\'o, \cite{Absolutely}]
\label{pro:fractional-sugery}
Let $K\subset Y$ be a knot in an integral homology three-sphere.
Then, we have the following inequalities 
(where here $n$ is any positive integer):
$$d_{1/2}(Y_K(0))-\frac{1}{2}\leq  d(Y_K(1/(n+1)))\leq d(Y_K(1/{n})) \leq d(Y) $$
$$d(Y)\leq d(Y_K(-1/n))\leq d(Y_K(-1/(n+1))) 
\leq d_{-1/2}(Y_K(0))+\frac{1}{2}.$$
\end{pro}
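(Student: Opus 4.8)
The plan is to prove the chain of inequalities in Proposition~\ref{pro:fractional-sugery} by induction on $n$, feeding the single-step surgery relations into one another. The two displayed chains are mirror images of each other under orientation reversal (via the reflection relation $d_{\pm 1/2}(Y_0,\s)=-d_{\mp 1/2}(-Y_0,\s)$ from Proposition~\ref{pro: Correction Term for b1=1}), so it suffices to establish the first chain
$$d_{1/2}(Y_K(0))-\tfrac{1}{2}\leq d(Y_K(1/(n+1)))\leq d(Y_K(1/n))\leq d(Y),$$
and then deduce the second by applying the first to the mirror knot $-K\subset -Y$. This reduces the work to the positive-surgery side.

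First I would handle the rightmost inequality $d(Y_K(1/n))\leq d(Y)$ and the monotonicity $d(Y_K(1/(n+1)))\leq d(Y_K(1/n))$ by observing that $1/(n+1)$-surgery and $1/n$-surgery on $K$ differ by a single $+1$-framed Rolfsen-type twist, or more directly that $Y_K(1/n)$ can be realized by surgery relating consecutive values in a way that fits the surgery exact sequence. The base case $n=1$ is exactly the second relation of Proposition~\ref{pro:Correction term for S3} together with Proposition~\ref{pro:Bound on correction term}: indeed $d(Y)-\tfrac12\leq d_{-1/2}(Y_0)$ gives the $\Z$-homology-sphere anchor, and the induction step compares $d(Y_K(1/n))$ with $d(Y_K(1/(n+1)))$ by viewing $Y_K(1/(n+1))$ as $+1$-surgery on a knot inside the integral homology sphere $Y_K(1/n)$ (the core of the surgery torus), then reapplying Proposition~\ref{pro:Bound on correction term}. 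Finally, the leftmost inequality $d_{1/2}(Y_K(0))-\tfrac12\leq d(Y_K(1/(n+1)))$ follows from the first relation of Proposition~\ref{pro:Correction term for S3}, which equates $d_{1/2}(Y_K(0))-\tfrac12$ with $d(Y_K(1))$, combined with the just-established monotonicity $d(Y_K(1/(n+1)))\leq d(Y_K(1))$ in the reversed direction; one must check the inequality points the correct way, which is where care is needed.

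The key technical input throughout is the behavior of the correction term under the surgery long exact sequence relating $\hfp$ of consecutive fillings, exactly as used in the proofs of Propositions~\ref{pro:Correction term for S3} and~\ref{pro:Bound on correction term}. Each single-step inequality comes from tracking the degree of the lowest non-torsion element in the image of $\hfinfty$ across the map $\pi_*$ in the triangle
$$\cdots\to\hfm(Y,\s)\xrightarrow{i_*}\hfinfty(Y,\s)\xrightarrow{\pi_*}\hfp(Y,\s)\to\cdots,$$
and the induction simply iterates this one filling at a time. The main obstacle, I expect, is not any deep new idea but rather the bookkeeping of gradings and orientations: one must verify that identifying $Y_K(1/(n+1))$ as a $+1$-surgery on the core knot inside $Y_K(1/n)$ respects the $\spinc$ identifications and the absolute $\Q$-grading shifts, and that the reflection relation is applied with consistent orientation conventions so that the two chains genuinely dualize. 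Since all the single-step estimates are already packaged in Propositions~\ref{pro:Bound on correction term}, \ref{pro:Correction term for S3}, and~\ref{pro: Correction Term for b1=1}, the proof is essentially an induction assembling these, and I would expect the author simply to cite \cite{Absolutely} for the underlying surgery-exact-sequence computation.
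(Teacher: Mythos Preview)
The paper does not actually prove this proposition; it simply refers to Corollary~9.14 of \cite{Absolutely}. Your inductive scheme --- realize $Y_K(1/(n+1))$ as $+1$-surgery on the core $K'\subset Y_K(1/n)$, iterate, and obtain the second chain by reflection --- is indeed how Ozsv\'ath and Szab\'o organize the argument there.

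There is, however, a genuine gap in your sketch. You claim the monotonicity $d(Y_K(1/(n+1)))\leq d(Y_K(1/n))$ (equivalently $d(Y'_{K'}(1))\leq d(Y')$ for $Y'=Y_K(1/n)$) follows by ``reapplying Proposition~\ref{pro:Bound on correction term}.'' It does not: that proposition only yields $d(Y')-\tfrac12\leq d_{-1/2}(Y'_0)$ and $d_{+1/2}(Y'_0)-\tfrac12\leq d(Y'_1)$, and no combination of these with Proposition~\ref{pro: Correction Term for b1=1} produces $d(Y'_1)\leq d(Y')$. This last inequality is a separate statement (Corollary~9.13 in \cite{Absolutely}), obtained by directly analyzing the absolute grading shift of the map in the surgery triangle; it is not packaged in the propositions you cite.

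Your derivation of the leftmost bound is also backwards. From $d_{1/2}(Y_K(0))-\tfrac12=d(Y_K(1))$ (Proposition~\ref{pro:Correction term for S3}, which moreover is stated only for $S^3$) and the monotonicity $d(Y_K(1/(n+1)))\leq d(Y_K(1))$ you would get $d_{1/2}(Y_K(0))-\tfrac12\geq d(Y_K(1/(n+1)))$, the wrong direction --- as you suspected. The fix is already implicit in your setup: $0$-surgery on the core $K'\subset Y_K(1/n)$ returns $Y_K(0)$, so the second inequality of Proposition~\ref{pro:Bound on correction term} applied to $K'$ gives $d_{1/2}(Y_K(0))-\tfrac12\leq d(Y_K(1/(n+1)))$ directly, for every $n$.
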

\begin{proof}
See \cite{Absolutely} Corollary 9.14.
\end{proof}

\subsection{Torsion invariant of the Alexander polynomial}
Here are some results relating the torsion of the Alexander polynomial of a knot in an integer homology sphere to $+1$-surgery. In what follows, $Y$ will be an oriented integer homology sphere. We define
$$N(Y):=\text{rank} \hfred(Y)$$
\begin{defn}
Let $K$ be a knot in a rational homology sphere $Y$ and let its normalized Alexander polynomial be
$$\Delta_K(T)=a_0+\sum_{i=1}^d\ a_i\ (T^i+T^{-i}).$$
We define the $i$-th torsion invariant of the Alexander polynomial to be 
$$t_i=\sum_{j=1}^d\ ja_{|i|+j}.$$
\end{defn}

\begin{thm}[P. Ozsv\'ath and Z. Szab\'o, \cite{Absolutely}]\label{thm: bound on torsion invariant of Alex Poly}
Let $Y$ be an integral homology three-sphere and $K\subset Y$ be a knot. Then
there is a bound:
$$|t_0(K)| + 2 \sum_{i=1}^d |t_i(K)|\leq \textnormal{\rank}  \hfred(Y)+\frac{d(Y)}{2}+ \textnormal{\rank}  \hfred(Y_K(1))-\frac{d(Y_K(1))}{2},$$
\end{thm}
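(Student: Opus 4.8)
The plan is to reinterpret each torsion coefficient $t_i(K)$ as an Euler characteristic in the Heegaard Floer homology of the zero--surgery $Y_0:=Y_K(0)$, to prove from this an estimate intrinsic to $Y_0$, and then to transport that estimate to $Y=Y_K(\infty)$ and $Y_1=Y_K(1)$ through the surgery exact triangle, using the correction--term inequalities of Proposition~\ref{pro:Bound on correction term} to absorb the infinite towers.

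First I would set up the identification $\Spinc(Y_0)\cong\Z$ and recall that the torsion coefficients are the spin$^c$--refined Turaev torsion of $Y_0$: for every $s\neq 0$ one has $\chi(\hfp(Y_0,s))=\pm t_s(K)$, while $\hfp(Y_0,s)=\hfred(Y_0,s)$ is finitely generated, and for the torsion spin$^c$ structure $s_0$ (the one with $c_1=0$) the group $\hfp(Y_0,s_0)$ carries two infinite towers whose bottom gradings are $d_{1/2}(Y_0)$ and $d_{-1/2}(Y_0)$, so that $t_0(K)$ is expressed through $\chi(\hfred(Y_0,s_0))$ together with $\tfrac12\big(d_{-1/2}(Y_0)-d_{1/2}(Y_0)\big)$. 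Since $|\chi|\le\rank$ in each spin$^c$ structure, and since $d_{1/2}(Y_0)-1\le d_{-1/2}(Y_0)$ by Proposition~\ref{pro: Correction Term for b1=1} makes the $s_0$ correction term nonnegative, summing over $s$ bounds $|t_0(K)|+2\sum_{i\ge 1}|t_i(K)|$ by a quantity intrinsic to $Y_0$, namely $\rank\hfred(Y_0)$ plus a correction--term contribution built from $d_{\pm1/2}(Y_0)$.

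Next I would pass from $Y_0$ to $Y$ and $Y_1$ using the absolutely graded surgery exact triangle relating $\hfp(Y)$, $\hfp(Y_0)$ and $\hfp(Y_1)$. The key is to carry along the $\Q$--gradings: the triangle maps shift degree by fixed amounts, so exactness simultaneously bounds the graded ranks of $\hfred(Y_0)$ in terms of those of $\hfred(Y)$ and $\hfred(Y_1)$ and constrains how the two towers of $Y_0$ sit relative to the towers of $Y$ and $Y_1$. It is precisely here that Proposition~\ref{pro:Bound on correction term}, in the form $d(Y)-\tfrac12\le d_{-1/2}(Y_0)$ and $d_{1/2}(Y_0)-\tfrac12\le d(Y_1)$, enters, replacing $d_{\pm1/2}(Y_0)$ by $d(Y)$ and $d(Y_1)$ and producing the right--hand side $\rank\hfred(Y)+\tfrac12 d(Y)+\rank\hfred(Y_1)-\tfrac12 d(Y_1)$.

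The main obstacle is that the rank estimate and the correction--term estimate cannot simply be substituted one after the other into the intrinsic $Y_0$--bound: the rank inequality coming from exactness of the triangle is favourable, but the substitution of $d(Y),d(Y_1)$ for $d_{\pm1/2}(Y_0)$ afforded by Proposition~\ref{pro:Bound on correction term} then runs the wrong way, so the two must be controlled jointly with attention to the $\Z/2$ grading parities. A clean way to see what is really needed is to rewrite the target: the Casson--type identity $\lambda(Y)=\chi(\hfred(Y))-\tfrac12 d(Y)$ of Ozsv\'ath--Szab\'o, together with the surgery formula $\lambda(Y_1)-\lambda(Y)=\tfrac12\Delta_K''(1)=t_0(K)+2\sum_{i\ge1}t_i(K)$, turns the right--hand side into $t_0(K)+2\sum_{i\ge1}t_i(K)+2e+2o$, where $e$ and $o$ are the even-- and odd--degree ranks of $\hfred(Y)$ and $\hfred(Y_1)$ respectively. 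The theorem is then equivalent to bounding the \emph{negative} parts of the torsion coefficients by $e+o$, and extracting this parity--sensitive estimate from the graded exact triangle, while correctly accounting for the two towers in the $s_0$ summand, is the heart of the argument.
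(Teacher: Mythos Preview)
The paper does not give a proof of this statement at all: its ``proof'' consists solely of the citation ``See \cite{Absolutely} Theorem~6.1.'' So there is no argument in the paper to compare yours against.

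That said, your sketch is broadly faithful to the strategy Ozsv\'ath and Szab\'o actually use in \cite{Absolutely}: one identifies the torsion coefficients $t_i(K)$ with Euler characteristics of $\hfp(Y_0,\s_i)$ (finite for $i\ne 0$, and for $i=0$ finite after removing the two towers), bounds each $|\chi|$ by rank, and then runs the absolutely graded surgery triangle for $(Y,Y_0,Y_1)$ to convert the $Y_0$--data into the stated combination of $\rank\hfred$ and correction terms for $Y$ and $Y_1$. Your identification of the ``main obstacle'' is accurate: the rank bound from exactness and the correction--term inequalities of Proposition~\ref{pro:Bound on correction term} cannot simply be chained, and one must instead track the graded pieces through the triangle, keeping the $\Z/2$--parity bookkeeping straight so that the tower contributions cancel correctly. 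Your reformulation via the Casson--type identity $\lambda=\chi(\hfred)-\tfrac12 d$ and the surgery formula for $\lambda$ is a legitimate way to see what residual inequality is really needed, though in \cite{Absolutely} the argument is organised more directly around the graded exact sequence rather than detouring through $\lambda$. As written your proposal is an outline rather than a proof, but the outline is sound.
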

\begin{proof}
See \cite{Absolutely} Theorem 6.1.
\end{proof}

\subsection{Correction term and torsion invariants for exceptional cosmetic surgeries.}

Theorem~\ref{my-theorem-A} also induces the following results about correction terms.

\begin{cor}\label{my-corollary-A1}
If a 3-manifold $Y$ is the result of an exceptional truly cosmetic surgery on a hyperbolic knot $K$ in $S^3$ then:
$$|t_0(K)| + 2 \sum_{i=1}^n |t_i(K)|\leq \rank  \hfred(Y),$$
where the number $t_i(K)$ for $i\in \Z$ is the torsion invariant of the Alexander polynomial  $\Delta_K(T)$ of $K$ and $n$ is the degree of $\Delta_K(T)$.
\end{cor}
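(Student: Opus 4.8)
The plan is to specialize Theorem~\ref{thm: bound on torsion invariant of Alex Poly} to the setting forced by Theorem~\ref{my-theorem-A}. By Theorem~\ref{my-theorem-A}, an exceptional truly cosmetic surgery on a hyperbolic knot $K\subset S^3$ must occur at slopes $\{r,s\}=\{+1,-1\}$, so there is an orientation-preserving homeomorphism $S^3_K(1)\cong S^3_K(-1)$. First I would invoke the general torsion bound of Ozsv\'ath--Szab\'o with $Y=S^3$: since $d(S^3)=0$ and $\hfred(S^3)=0$, the inequality collapses to
$$|t_0(K)| + 2 \sum_{i=1}^n |t_i(K)|\leq \rank  \hfred(S^3_K(1))-\frac{d(S^3_K(1))}{2}.$$

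The crux is then to show that the right-hand side equals $\rank \hfred(Y)$, i.e.\ that the correction term contribution $d(S^3_K(1))$ vanishes. For this I would use Corollary~\ref{my-corollary-A2} (the vanishing of $d(S^3_K(1/n))$ for all $n$, in particular $n=1$, under the exceptional cosmetic hypothesis), which rests on the $d$-invariant inequalities of Proposition~\ref{pro:fractional-sugery} together with the symmetry coming from the orientation-preserving identification $S^3_K(1)\cong S^3_K(-1)$. Concretely, Proposition~\ref{pro:fractional-sugery} with $Y=S^3$ gives $d(S^3_K(1))\leq d(S^3)=0$ and $0=d(S^3)\leq d(S^3_K(-1))$; since the truly cosmetic homeomorphism is orientation-preserving it identifies the correction terms $d(S^3_K(1))=d(S^3_K(-1))$, and squeezing these inequalities forces $d(S^3_K(1))=0$.

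Having established $d(S^3_K(1))=0$, the bound reads
$$|t_0(K)| + 2 \sum_{i=1}^n |t_i(K)|\leq \rank  \hfred(S^3_K(1)).$$
Finally I would identify $\hfred(S^3_K(1))$ with $\hfred(Y)$: the surgered manifold $Y$ is by definition homeomorphic to $S^3_K(1)$ (it is the output of the cosmetic surgery at slope $+1$), and $\hfred$ is a homeomorphism invariant of the oriented manifold, so $\rank \hfred(S^3_K(1))=\rank \hfred(Y)$. This yields the stated inequality.

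The main obstacle I anticipate is the careful justification that $d(S^3_K(1))=0$; this is where the orientation-preserving nature of the cosmetic homeomorphism is essential, and one must be attentive to sign conventions relating $d(S^3_K(-1))$ to $d(-S^3_K(1))$ so that the two chains of inequalities from Proposition~\ref{pro:fractional-sugery} genuinely pinch the value to zero rather than merely bounding it. The remaining steps—collapsing the $Y=S^3$ terms and invoking invariance of $\hfred$—are routine.
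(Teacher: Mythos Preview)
Your argument is correct, and your route to $d(S^3_K(1))=0$ is in fact more direct than the paper's. Both proofs begin identically: invoke Theorem~\ref{my-theorem-A} to reduce to $Y=S^3_K(1)\cong S^3_K(-1)$, apply Theorem~\ref{thm: bound on torsion invariant of Alex Poly} with ambient sphere $S^3$, and collapse the $S^3$ contributions. The divergence is in the vanishing of the correction term. The paper does not use Proposition~\ref{pro:fractional-sugery} here; instead it appeals to Rustamov's identity $\lambda(Y)=\chi(\hfred(Y))-d(Y)/2$, computes $\lambda(Y)=\Delta_K''(1)=0$ via the Casson surgery formula together with Boyer--Lines (Proposition~\ref{my-proposition-1}), and uses $\chi(\hfred(Y))=0$ from Proposition~\ref{Ni-Wu-result} to conclude $d(Y)=0$. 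Your squeeze $d(S^3_K(1))\leq 0\leq d(S^3_K(-1))=d(S^3_K(1))$ from Proposition~\ref{pro:fractional-sugery} and the orientation-preserving identification is cleaner and avoids the Casson machinery altogether; the sign worry you flag is not an issue, since an orientation-preserving homeomorphism of integral homology spheres immediately gives equality of $d$-invariants. One caution on presentation: do not cite Corollary~\ref{my-corollary-A2} as a black box, since in the paper it is proved \emph{after} Corollary~\ref{my-corollary-A1} and its proof quotes the present computation of $d(S^3_K(1))$; your concrete derivation from Proposition~\ref{pro:fractional-sugery} is self-contained and should stand on its own.
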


This lower bound is strictly positive since not all the torsion $t_i(K)$  are zero.

\begin{proof}
Let $K\subset S^3$ be a hyperbolic knot such that there is an orientation preserving homeomorphism between $S^3_K(r)$ and  $S^3_K(r')$ for two distinct rational numbers $r$ and $r'$. Let $Y=S^3_K(r)$, by Theorem \ref{my-theorem-A} we can assume $r=+1$. By Theorem \ref{thm: bound on torsion invariant of Alex Poly} we have the inequality:
$$|t_0(K)| + 2 \sum_{i=1}^n |t_i(K)|\leq \rank  \hfred(S^3)+\frac{d(S^3)}{2}+\rank  \hfred(Y)-\frac{d(Y)}{2},$$
where the number $t_i(K)$ for $i\in \Z$ is the torsion invariant of the Alexander polynomial  $\Delta_K(T)$ of $K$ and $n$ is the degree of $\Delta_K(T)$. On the other hand we also have the identity:
$$\lambda(Y)= \chi (\hfred(Y))-\frac{d(Y)}{2}$$
from \cite{Rustamov}, where $\lambda$ stand for the Casson invariant. We also know that $\rank  \hfred(S^3)= d(S^3)=0$. Now by the surgery formula for Casson invariant 
$$\lambda(Y)=\lambda(S^3)+\lambda(L(1,1))+ \Delta''_K(1) = \Delta''_K(1) $$
and by Proposition \ref{my-proposition-1} $\Delta''_K(1) =0$, thus $\lambda (Y)=0$. By Proposition \ref{Ni-Wu-result} $\chi (\hfred(Y))=0$, hence $d(Y)=0$. This proves the desired result.  
\end{proof}

\begin{cor}\label{my-corollary-A2}
If a hyperbolic knot $K\subset S^3$ admits an exceptional truly cosmetic surgery then the Heegaard Floer correction term of any $1/n$ ($n\in \Z$) surgery on $K$ satisfies $$d(S^3_K(1/n))=0.$$
\end{cor}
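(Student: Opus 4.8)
The plan is to leverage the machinery already assembled in Corollary~\ref{my-corollary-A1} together with the monotonicity of correction terms along the family of $1/n$-surgeries recorded in Proposition~\ref{pro:fractional-sugery}. First I would invoke Theorem~\ref{my-theorem-A}: since $K$ admits an exceptional truly cosmetic surgery, we may normalize so that the cosmetic slopes are $\{+1,-1\}$, and in particular there is an orientation-preserving homeomorphism $S^3_K(1)\cong S^3_K(-1)$. The key numerical input, extracted in the proof of Corollary~\ref{my-corollary-A1}, is that $d(S^3_K(1))=0$; by applying the orientation-reversing symmetry (equivalently, running the same argument for $-K$ or using $d(-Y,\s)=-d(Y,\s)$), one also gets $d(S^3_K(-1))=0$.

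Next I would anchor the two ends of the inequality chain. By Proposition~\ref{pro:Correction term for S3} we have $d_{1/2}(S^3_K(0))-\tfrac12 = d(S^3_K(1))=0$ and $d(S^3_K(-1))-\tfrac12 = d_{-1/2}(S^3_K(0))$, so $d_{1/2}(S^3_K(0))=\tfrac12$ and $d_{-1/2}(S^3_K(0))=-\tfrac12$. Feeding these into the two displayed inequalities of Proposition~\ref{pro:fractional-sugery}, and using $d(S^3)=0$ at the right-hand ends, gives for every positive integer $n$ the squeeze
$$0 = d_{1/2}(S^3_K(0))-\tfrac12 \leq d(S^3_K(1/n)) \leq d(S^3)=0,$$
$$0 = d(S^3) \leq d(S^3_K(-1/n)) \leq d_{-1/2}(S^3_K(0))+\tfrac12 = 0.$$
Hence $d(S^3_K(1/n))=0$ and $d(S^3_K(-1/n))=0$ for all positive $n$, and the case $n=0$ is vacuous (that slope is $\infty$) while $1/n=1$ for $n=\pm1$ is already covered. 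Combining both signs yields $d(S^3_K(1/n))=0$ for all $n\in\Z$, which is the claim.

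The main obstacle I anticipate is not the chain of inequalities, which is formal once the endpoints are pinned, but rather justifying $d(S^3_K(1))=0$ cleanly for both orientations. In Corollary~\ref{my-corollary-A1} this was obtained by routing through the Casson invariant via Rustamov's identity $\lambda(Y)=\chi(\hfred(Y))-\tfrac{d(Y)}{2}$, the surgery formula $\lambda(S^3_K(1))=\Delta_K''(1)$, the vanishing $\Delta_K''(1)=0$ from Proposition~\ref{my-proposition-1}, and the vanishing $\chi(\hfred(S^3_K(1)))=0$ from Proposition~\ref{Ni-Wu-result}. I would reuse exactly this computation, emphasizing that it applies symmetrically to $S^3_K(-1)$, so that both correction terms vanish and the squeeze closes simultaneously from above and from below. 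The only delicate point to state carefully is the identification of which surgery plays the role of $Y_K(1)$ versus $Y_K(-1)$ in Proposition~\ref{pro:Correction term for S3}, but since the cosmetic hypothesis forces $d$ to agree on the two unit surgeries, either identification produces the same endpoint values.
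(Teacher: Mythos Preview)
Your argument is correct and follows essentially the same route as the paper: invoke the vanishing $d(S^3_K(1))=0$ established in the proof of Corollary~\ref{my-corollary-A1}, identify the endpoint $d_{1/2}(S^3_K(0))-\tfrac12$ via Proposition~\ref{pro:Correction term for S3}, and then squeeze using Proposition~\ref{pro:fractional-sugery}. In fact your write-up is more complete than the paper's, which only displays the chain for positive $n$ and leaves the negative-$n$ case implicit; you close that gap explicitly by also pinning $d(S^3_K(-1))=0$ (most directly because the orientation-preserving homeomorphism $S^3_K(1)\cong S^3_K(-1)$ carries $d$ across) and running the second inequality of Proposition~\ref{pro:fractional-sugery}.
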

\begin{proof}
Let $K$ be as in proof of Corollary \ref{my-corollary-A1}.
Let  $d_{1/2}(S^3_K(0))$ and  $d_{-1/2}(S^3_K(0))$ be the two correction terms for the $0$-surgery along $K$.  Let $n$ be a positive integer, by Proposition~\ref{pro:fractional-sugery}, 
$$d_{1/2}(S^3_K(0))-\frac{1}{2} \leq d(S^3_K(1/(n+1))) \leq d(S^3_K(1/n))\leq d(S^3)=0$$
By Proposition \ref{pro:Correction term for S3} we have
$$d_{1/2}(S^3_K(0))-\frac{1}{2}= d(S^3_K(+1)).$$ 
By the proof of Corollary \ref{my-corollary-A1}  $d(S^3_K(+1))=0$, this completes the proof.
\end{proof}

%

\medskip\noindent
\vspace{.5cm} 

\scriptsize{
\noindent
Huygens C. Ravelomanana, CIRGET-UQAM 
\newline\noindent
P. O. Box 8888, Centre-ville, Montr\'eal, Qc, H3C 3P8, Canada
\newline\noindent
e-mail: huygens@cirget.ca  
}


\begin{thebibliography}{9999}




\bibitem{Weeks}
{S. A. Bleiler, C. D. Hodgson, J. R. Weeks},
\newblock{\em Cosmetic surgery on knots},
\newblock{Geometry and Topology Monogr. {2}, 23-24. 12 p, 1999}.

\bibitem{Boyer-Lines}
{S. Boyer and D. Lines},
\newblock{\em Surgery formulae for Casson's invariant and extensions to homology lens spaces},
\newblock{J. Reine Angew. Math. 405 (1990), 181-220}.


\bibitem{finite-paper}
 S. Boyer and X. Zhang, 
\newblock{\em A proof of the finite filling conjecture}, 
\newblock{J. Differential Geom. 59 (2001), no. 1, 87–176}. 



\bibitem{Brit-Wu}
Mark Brittenham and Ying-Qing Wu,
\newblock{\em The classification of exceptional Dehn surgeries on 2-bridge knots},
\newblock{Comm. Anal. Geom. 9 (2001), no. 1, 97–113}.


\bibitem{Cyclic-paper}
{M. Culler, C. MCA. Gordon, J. Luecke and P. B. Shalen},
\newblock{\em Dehn surgery on knots},
\newblock{Ann. of Math., {\bf 125} (1987),237-300}.


\bibitem{punctured-tori}
{C. McA. Gordon}, 
\newblock{\em Boundary slopes of punctured tori in 3-manifolds}, 
\newblock{Trans. Amer. Math. Soc. 350 (1998), no. 5, 1713–1790}.


\bibitem{Gordon-Kyoto}
{C. McA. Gordon}, 
\newblock{\em Dehn surgery on knots}, 
\newblock{in: Proceedings of the International Congress of Mathematicians}
\newblock{vol. I, II (Kyoto 1990), Mathematical Society of Japan, Tokyo (1991), 631–642.}


\bibitem{Gordon-Luecke} 
C. McA. Gordon and J. Luecke, 
\newblock{\em Knots are determined by their complements}, 
\newblock{J. Amer. Math. Soc. {\bf 2} (1989), 371--415}.


\bibitem{Gordon-Luecke-2} 
C. McA. Gordon and J. Luecke, 
\newblock{\em Non-integral toroidal Dehn surgeries}, 
\newblock{Comm. Anal. Geom. {\bf{12}} (2004), 417-485}.


\bibitem{Gordon-Luecke-3}
 C. McA. Gordon and J. Luecke, 
\newblock{\em Reducible manifolds and Dehn surgery}, 
\newblock{Topology 35 (1996) 385–410}. 


\bibitem{Gordon-Luecke-4}
 C. McA. Gordon and J. Luecke, 
\newblock{\em Surgeries producing essential tori I}, 
\newblock{Comm. Anal. Geom. {\bf{3}} (1995), 597-644}. 


\bibitem{toroidal}
{C. McA. Gordon and Ying-Qing Wu}, 
\newblock{\em Toroidal Dehn fillings on hyperbolic 3-manifolds}, 
\newblock{Mem. Amer. Math. Soc. 194 (2008), no. 909}.

\bibitem{Ichihara}
{Kazuhiro Ichihara and Hidetoshi Masai},
\newblock{\em Exceptional surgeries on alternation knots},
\newblock{arXiv:math.GT/1310.3472 }.

\bibitem{Kirby-list}
{R. Kirby}, 
\newblock{\it Problems in low-dimensional topology},
\newblock{AMS/IP Studies in Advanced Mathematics, volume 2, part 2,
 Amer. Math. Soc. (1997)}.


\bibitem{Lackenby}
{ M. Lackenby},
\newblock{\em Dehn surgery on knots in $3$-manifolds},
\newblock{ J. Amer. Math. Soc. {10} (1997)  835--864}.


\bibitem{Lackenby-Meyerhoff}
{M. Lackenby and R. Meyerhoff},  
\newblock{\em The maximal number of exceptional Dehn surgeries}, 
\newblock{Invent. Math. 191 (2013), no. 2, 341–382}. 



\bibitem{Mathieu}
Y. Mathieu,
\newblock{\em Closed 3-manifolds unchanged by Dehn surgery}, 
\newblock{Journal of Knot Theory and Its Ramifications, Vol. I No. 3 (1992) 279-296}.


\bibitem{Mathieu-2}
Y. Mathieu,
\newblock{\em Sur les n{\oe}uds qui ne sont pas
d{\'e}termin{\'e}s par leur compl{\'e}ment et problemes de chirurgie
dans les vari\'et\'es de dimension 3},
\newblock{ PhD thesis, Marseille (1990)}.


%
%
%
%


\bibitem{Nemethi}
{A. N\'emethi}, 
\newblock{\em On the floer homology of $S^3_{-p/q}(K)$}, 
\newblock{published as part of {\em `Graded roots and singularities'}},
\newblock{in Singularities in geometry and topology, World Sci. Publ., Hackensack, NJ, 2007, 394–463}.




%
%
%




\bibitem{Ni-cosmetic-Thurston-norm}
{Yi Ni},
\newblock{\em Thurston norm and cosmetic surgeries},
\newblock{in: Low-dimensional and Symplectic Topology},
\newblock{Proceedings of Symposia in Pure Mathematics. No.82. (2011), pp. 53-63}.


\bibitem{Ni-Wu}
{Yi Ni and Zhongtao Wu},
\newblock{\em Cosmetic surgeries on knots in $S^3$},
\newblock{to appear in J. reine angew. Math.}.


\bibitem{Absolutely}
{ P. Ozsv\'ath and Z. Szab\'o}, 
\newblock{\em Absolutely graded Floer homologies and intersection forms for four-manifolds with boundary}, 
\newblock{ Adv. Math., 173:179–261, 2003. math.SG/0110170}.


\bibitem{Plumbed}
{ P. Ozsv\'ath and Z. Szab\'o}, 
\newblock{\em On the Floer homology of plumbed three-manifolds}, 
\newblock{ Geom. Topol.  7  (2003), 185--224 (electronic)}.


\bibitem{Holo-disks-closed-3mfd}
{ P. Ozsv\'ath and Z. Szab\'o}, 
\newblock{\em Holomorphic disks and topological invariants for closed  three-manifolds},
\newblock{Ann. of Math. \textbf{159}(3) (2004), 1027--1158}.


\bibitem{Applications}
{ P. Ozsv\'ath and Z. Szab\'o}, 
\newblock{\em Holomorphic disks and 3-manifold invariants: properties and
applications},
\newblock{Ann. of Math. \textbf{159} (2004)  1159--1245}.


\bibitem{Holo-disks-knot}
{ P. Ozsv\'ath and Z. Szab\'o}, 
\newblock{\em Holomorphic disks and knot invariants},
\newblock{Adv. Math.  \textbf{186}(1) (2004), 58--116}.


\bibitem{Holo-disks-genus-bound}
{ P. Ozsv\'ath and Z. Szab\'o}, 
\newblock{\em Holomorphic disks and genus bounds},
\newblock{ Geom. Topol. 8 (2004), 311–334}.



\bibitem{OSzLensSpace}
{ P. Ozsv\'ath and Z. Szab\'o},
\newblock{\em On knot Floer homology and lens space surgeries}, 
\newblock{Topology 44 (2005) 1281-1300}.




\bibitem{Intro-HFH}
{ P. Ozsv\'ath and Z. Szab\'o}, 
\newblock{\em Introduction to Heegaard Floer theory},
\newblock{Clay Math. Proc. Volume \textbf{5} (2006), 3-28}.


\bibitem{Lectures-HFH}
{ P. Ozsv\'ath and Z. Szab\'o}, 
\newblock{\em Lectures on Heegaard Floer homology},
\newblock{Clay Math. Proc. Volume \textbf{5} (2006), 29-70}.





\bibitem{Rasmussen-Knot}
J. Rasmussen, 
\newblock{\em Floer homology and knot complement}, 
\newblock{Phd thesis, Harvard University, 2003}.



%
\bibitem{Rustamov}
 R. Rustamov, 
\newblock{\em Surgery formula for the renormalized Euler characteristic of Heegaard Floer homology},
\newblock{ preprint (2004), available at arXiv:math.GT/0409294}.

%




\bibitem{Saveliev}
{N. Saveliev},
\newblock{\em Invariants for homology spheres},
\newblock{Encyclopedia of Mathematical Sciences, Springer 2002}.


\bibitem{Thurston}
{ W. P. Thurston}, {\it The geometry and topology of $3$-manifolds},
Princeton Univ. Math. Dept. (1979).


\bibitem{Watson-PhD}
L. Watson,
\newblock {\em Involutions on 3-manifolds and {K}hovanov homology},
\newblock PhD thesis, {Universit\'e du Qu\'ebec \`a Montr\'eal}, 2009.



\bibitem{Wu_Qing}
{ Ying-Qing Wu},
\newblock{\em Dehn surgery on   arborescent knots},
\newblock{J. Diff. Geom. bf 42 (1996), 171-197}.


\bibitem{Wu_Qing2}
{ Ying-Qing Wu},
\newblock{\em The classification of toroidal Dehn surgeries on Montesinos knots},
\newblock{Comm. Anal. Geom. 19 (2011), no. 2, 305–345}.


\bibitem{Wu_Qing3}
{ Ying-Qing Wu},
\newblock{\em Exceptional surgery on  large arborescent knots},
\newblock{Pacific J. Math. 252 (2011), no. 1, 219–243}.



\bibitem{Wu}
{ Zhongtao Wu},
\newblock{\em Cosmetic surgery in L-space homology spheres},
\newblock{ Geometry and Topology {15} (2011)  1157-1168}.


\end{thebibliography}
\end{document}